\theoremstyle{plain}
\newtheorem{thm}{Theorem}[section]
\newtheorem{lem}[thm]{Lemma}
\newtheorem{cor}[thm]{Corollary}
\newtheorem{prop}[thm]{Proposition}
\newtheorem{conj}[thm]{Conjecture}
\newtheorem{ex}[thm]{Example}
\theoremstyle{definition}
\newtheorem{defi}[thm]{Definition}
\theoremstyle{remark}
\newtheorem{remark}[thm]{Remark}
\newcommand{\lemref}[1]{\hyperref[#1]{Lemma \ref*{#1}}}
\newcommand{\thmref}[1]{\hyperref[#1]{Theorem \ref*{#1}}}
\newcommand{\propref}[1]{\hyperref[#1]{Proposition \ref*{#1}}}
\newcommand{\corref}[1]{\hyperref[#1]{Corollary \ref*{#1}}}
\newcommand{\defref}[1]{\hyperref[#1]{Definition \ref*{#1}}}
\newcommand{\remref}[1]{\hyperref[#1]{Remark \ref*{#1}}}
\newcommand{\conjref}[1]{\hyperref[#1]{Conjecture \ref*{#1}}}
\def \F {\mathbb{F}}
\newcommand*{\defeq}{\mathrel{\rlap{%
                     \raisebox{0.27ex}{$\m@th\cdot$}}%
                     \raisebox{-0.27ex}{$\m@th\cdot$}}%
                     =}
\numberwithin{equation}{section}
\def\@setcopyright{}
\def\serieslogo@{}
\title{The Tamely Ramified Geometric Quantitative Minimal Ramification Problem}
\author{Mark Shusterman}
\address{Department of Mathematics, Harvard University, 1 Oxford Street, Cambridge, MA 02138, USA}
\email{mshusterman@math.harvard.edu}
\begin{document}

\begin{abstract}

We prove a large finite field version of the Boston--Markin conjecture on counting Galois extensions of the rational function field with a given Galois group and the smallest possible number of ramified primes. 
Our proof involves a study of structure groups of (direct products of) racks.

\end{abstract}

\maketitle

\section{Introduction}

Let $G$ be a nontrivial finite group.
We denote by $d(G)$ the least cardinality of a generating set of $G$, and by $d_\lhd(G)$ the least cardinality of a subset of $G$ generating $G$ normally.
That is, $d_\lhd(G)$ is the smallest positive integer $d$ for which there exist $g_1, \dots, g_d \in G$ such that $G$ has no proper normal subgroup containing $g_1, \dots, g_d$. Equivalently, it is the least positive integer $d$ for which there exist $d$ conjugacy classes in $G$ that generate $G$. 
Let $G' = [G,G]$ be the commutator subgroup of $G$, and let $G^{\textup{ab}} = G/[G,G]$ be the abelianization of $G$. 
It is a standard group-theoretic fact, see for instance \cite[Theorem 10.2.6]{NSW}, that
\[
d_\lhd(G) = \max\{d(G^{\textup{ab}}), 1\} =
\begin{cases}
d(G^{\textup{ab}}) & G^{\textup{ab}} \neq \{1\}, \\
1 & G^{\textup{ab}} = \{1\}.
\end{cases}
\]

\begin{defi}

A $G$-extension of a field $L$ is a pair $(K, \varphi)$ where $K$ is a Galois extension of $L$ and $\varphi \colon \operatorname{Gal}(K/L) \to G$ is an isomorphism.
Abusing notation, we will at times denote a $G$-extension simply by $K$, tacitly identifying $\operatorname{Gal}(K/L)$ with $G$ via $\varphi$.

\end{defi}

Since $\mathbb Q$ has no unramified extensions, for every tamely ramified $G$-extension $K/\mathbb Q$, the inertia subgroups of $G$ generate it and are cyclic.
As the inertia subgroups of primes in $K$ lying over a given prime of $\mathbb Q$ are conjugate,
it follows that the number of primes of $\mathbb{Q}$ ramified in $K$ is at least $d_\lhd(G)$.

Going beyond the inverse Galois problem, \cite{BM} conjectures that the lower bound $d_\lhd(G)$ on the number of ramified primes in a tamely ramified $G$-extension of $\mathbb Q$ is optimal.

\begin{conj} \label{TamelyRamifiedBoston}

There exists a (totally real) tamely ramified $G$-extension $K/\mathbb{Q}$ such that the number of (finite) primes of $\mathbb Q$ ramified in $K$ is $d_\lhd(G)$.

\end{conj}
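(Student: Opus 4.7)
The natural approach is induction along a chief series of $G$, building the tamely ramified extension one layer at a time by solving embedding problems while keeping the ramification set unchanged. The base case $G$ abelian follows from class field theory: writing $G \cong \prod_{i=1}^{d} \mathbb{Z}/n_i\mathbb{Z}$ with $d = d(G) = d_\lhd(G)$, Dirichlet's theorem furnishes distinct primes $p_i$ with $p_i \equiv 1 \pmod{n_i}$, and the unique degree-$n_i$ subfield of $\mathbb{Q}(\zeta_{p_i})$ is a cyclic $\mathbb{Z}/n_i\mathbb{Z}$-extension tamely ramified only at $p_i$; their compositum realizes $G$ over $\mathbb{Q}$ with exactly $d_\lhd(G)$ ramified primes, all contained in $\{p_1,\dots,p_d\}$.

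For the inductive step, I would fix a chief series $\{1\} = N_0 \lhd N_1 \lhd \cdots \lhd N_r = G$ (so each $N_i/N_{i-1}$ is elementary abelian or a power of a nonabelian simple group) and assume a tamely ramified $G/N_i$-extension $L_i/\mathbb{Q}$ has been produced with ramification locus inside a fixed set $S$ of size $d_\lhd(G)$. The task is to solve the embedding problem
\[
1 \to N_i/N_{i-1} \to G/N_{i-1} \to G/N_i \to 1
\]
over $L_i$ so that the resulting $G/N_{i-1}$-extension $L_{i-1}/\mathbb{Q}$ remains tamely ramified with ramification locus inside $S$. The obstruction lives in $H^2(\Gal(\overline{\mathbb{Q}}/\mathbb{Q}), N_i/N_{i-1})$ (twisted by the action of $G/N_i$), and the strategy would be to kill it by adjusting the tame local inertia characters at the primes of $S$ via Poitou--Tate duality, while simultaneously checking that no prime outside $S$ enters the new ramification locus. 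Each inertia subgroup at $p \in S$ is cyclic, and one has to ensure that its image in $G/N_{i-1}$ extends the given image in $G/N_i$ in a way compatible with the chosen generating $d_\lhd(G)$-tuple of conjugacy classes in $G$.

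The main obstacle, and the reason the conjecture remains open over $\mathbb{Q}$, is twofold. First, when a chief factor $N_i/N_{i-1}$ is nonabelian simple, merely solving the embedding problem subsumes the inverse Galois problem; and even in the solvable case, Shafarevich-type constructions freely introduce auxiliary ramified primes that one does not know how to eliminate. Second, tameness forces every inertia subgroup at $p\in S$ to be cyclic, and simultaneously threading these cyclic inertia groups through every layer of the chief series at every prime of $S$, without spilling ramification outside $S$, seems to require a quantitative tame local-global principle for embedding problems that is not currently available. A plausible successful proof would combine rigidity-type regular realizations (in the tradition of Fried, Matzat, Malle, and Belyi) producing extensions with very few branch points, with such a local-global principle; the paper establishes precisely an analog of this principle in the geometric setting over large finite fields, via the structure groups of racks advertised in the abstract, whereas the corresponding archimedean statement over $\mathbb{Q}$ is presently out of reach.
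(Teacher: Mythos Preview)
The statement you are addressing is \cref{TamelyRamifiedBoston}, which is a \emph{conjecture} in the paper, not a theorem; the paper offers no proof of it. The paper only records that Boston--Markin verified the abelian case and that the general case remains open, and then passes to the function field analog \cref{TamelyRamifiedBostonFF} and its quantitative refinements, eventually proving the large-finite-field counting result \cref{MobiusVonMangoldtConjectureLargeFF}. So there is no ``paper's own proof'' to compare against.

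Your write-up is in fact not a proof either, and you say as much: the abelian base case is essentially the standard argument (with the small correction that for the extension to be totally real one should take $p_i \equiv 1 \pmod{2n_i}$, so that the degree-$n_i$ subfield of $\mathbb{Q}(\zeta_{p_i})$ sits inside the maximal real subfield), while for the inductive step you honestly identify the two well-known obstructions---nonabelian chief factors drag in the inverse Galois problem, and even in the solvable case one cannot currently prevent auxiliary ramification. That diagnosis is accurate. One inaccuracy in your closing paragraph: the paper does not establish a tame local--global principle for embedding problems over $\F_q(T)$; what it proves is an asymptotic count of minimally ramified $G$-extensions as $q \to \infty$, by computing the image in $S_{n_1}\times\cdots\times S_{n_k}$ of braid-group stabilizers via the structure group of a rack. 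That is a monodromy computation feeding into a Grothendieck--Lefschetz/Deligne argument, not an embedding-problem result.
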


The assumption that $K$ is totally real can also be stated as $K/\mathbb Q$ being split completely at infinity, or as complex conjugation corresponding to the trivial element of $G$. \cite{BM} does not restrict to this case, allowing arbitrary ramification at infinity, but suggests that this case is perhaps more interesting (or more challenging).
\cite{BM} also verifies the conjecture for $G$ abelian.

Let $p$ be a prime number, and let $q$ be a power of $p$.
To study a problem analogous to \cref{TamelyRamifiedBoston} over the rational function field $\F_q(T)$ in place of $\mathbb Q$, we introduce two restrictions that can perhaps make the situation over $\F_q(T)$ more similar to the one over $\mathbb Q$. First, we consider only regular extensions $K/\F_q(T)$, namely those for which every element of $K$ that is algebraic over $\F_q$ lies in $\F_q$. In particular, this rules out constant extensions of $\F_q(T)$ - a family of everywhere unramified extensions that do not have an analog over $\mathbb Q$.
Second, we assume that $p$ does not divide $|G|$, so that every $G$-extension of $\F_q(T)$ is tamely ramified.
The following is a special case of conjectures made in \cite{DeW14, BSEF}.

\begin{conj} \label{TamelyRamifiedBostonFF}

Let $q$ be a prime power coprime to $|G|$.
Then there exists a regular $G$-extension $K/\F_q(T)$ (split completely at infinity), such that the number of primes of $\F_q(T)$ ramified in $K$ is $d_\lhd(G)$. 

\end{conj}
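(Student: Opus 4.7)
The plan is to prove existence for \emph{every} $q$ coprime to $|G|$ (not merely for $q$ large) by combining a Hurwitz-space parametrization, the rack structure groups flagged in the abstract, and geometric class field theory.

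Using the identity $d_\lhd(G) = \max\{d(G^{\textup{ab}}),1\}$ with $d := d_\lhd(G)$, first fix conjugacy classes $C_1,\dots,C_d \subseteq G$ that normally generate $G$ and whose images in $G^{\textup{ab}}$ generate the abelianization; these are the intended inertia types at the $d$ ramified primes. Next, set up the Hurwitz scheme $\mathcal{H}$ over $\Z[1/|G|]$ parametrizing regular $G$-covers $f \colon Y \to \mathbb{P}^1$ split at $\infty$ and branched over an unordered set $P$ of $d$ closed points of $\mathbb{A}^1$, with inertia classes at the geometric branch points prescribed by the $C_i$ compatibly with the Frobenius action on $P$. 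Each $\F_q$-point of $\mathcal{H}$ yields a $G$-extension of $\F_q(T)$ of the desired kind, so the task becomes producing such a point for every $q$ coprime to $|G|$.

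The key geometric input, following the abstract, is the rack $X := C_1 \sqcup \cdots \sqcup C_d$ under conjugation, together with its iterated direct products indexed by the Frobenius orbit shapes on the geometric branch locus. The connected components of $\mathcal{H}_{\mybar{\F_q}}$ are parametrized by Nielsen tuples modulo the Hurwitz/braid action, and an extension of the Conway--Parker--Fried--Voelklein theorem to rack structure groups and their direct products identifies this set of components with a torsor under a quotient of the free abelian group $\mathrm{As}(X)^{\textup{ab}} \cong \Z^d$. Via a fixed-point argument on this torsor for the Frobenius action, one isolates a geometrically irreducible component $\mathcal{H}^0 \subseteq \mathcal{H}$ defined over $\F_q$.

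The main obstacle --- and the step that goes beyond the large-$q$ version of the statement --- is exhibiting a point of $\mathcal{H}^0(\F_q)$ for \emph{every} $q$ coprime to $|G|$, since Lang--Weil only handles $q \gg_G 1$. My plan is to leverage the explicit rack picture to exhibit $\mathcal{H}^0$ (or a dense open subscheme) as fibered over a Picard-type scheme attached to $\mathrm{As}(X)^{\textup{ab}}$, whose $\F_q$-points are non-empty for every $q$ by geometric class field theory. If an individual Frobenius orbit shape on the branch locus yields an empty fiber over some small $q$, one retains the combinatorial freedom to redistribute the $d$ ramified primes among alternative orbit shapes (preserving the total of $d$ closed branch points), and a counting lemma should guarantee that at least one admissible shape produces a non-empty fiber. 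Proving this uniform-in-$q$ vanishing of the rack-theoretic descent obstruction --- valid for \emph{all} $q$ coprime to $|G|$ rather than only asymptotically --- is the central difficulty I expect to face.
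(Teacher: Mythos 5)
The statement you are trying to prove is a \emph{conjecture} in the paper, not a theorem: the paper records \cref{TamelyRamifiedBostonFF} as a special case of conjectures of De Witt and Bary-Soroker--Entin--Fehm and proves nothing in this direction for fixed $q$. What the paper actually establishes (\cref{MobiusVonMangoldtConjectureLargeFF}) is a quantitative statement in the large finite field regime, where the degrees $n_j$ are fixed and $q \to \infty$ along prime powers coprime to $|G|$; there the existence of suitable extensions comes from Lang--Weil applied to an $\F_q$-rational component of the Hurwitz scheme, exactly the tool you correctly note is unavailable for fixed small $q$. So there is no proof in the paper for your proposal to be compared against, and your proposal would, if correct, prove something strictly stronger than anything the paper claims.

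As a proof, your proposal has a genuine gap at precisely the step you flag as the ``central difficulty'': producing a point of $\mathcal H^0(\F_q)$ for \emph{every} $q$ coprime to $|G|$. The suggested mechanism --- fibering $\mathcal H^0$ over a Picard-type scheme attached to the abelianized structure group and invoking geometric class field theory --- cannot work as stated, because geometric class field theory only governs \emph{abelian} covers: the lifting invariant valued in (a quotient of) the abelianization of the structure group detects the component of the Hurwitz space, but a rational point of the base of any such abelianized fibration does not lift to a rational point of $\mathcal H^0$, and the fibers encode exactly the nonabelian data (for $G$ abelian the conjecture is already known, so the abelianized picture contributes nothing new). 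Likewise, redistributing the $d$ closed branch points among different Frobenius orbit shapes changes which component one looks at but gives no counting lemma forcing any fiber to be nonempty when $q$ is small; no such lemma is known, and its absence is exactly why \cref{TamelyRamifiedBostonFF} is open even for groups for which the large-$q$ statement is easy. In short, the reduction to Hurwitz spaces and the Conway--Parker/rack-theoretic identification of components parallels the paper's setup, but the uniform-in-$q$ existence of rational points is not established by your argument, and the conjecture remains unproven.
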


Giving such an extension of $\F_q(T)$ is equivalent to producing a morphism of smooth projective geometrically connected curves $g \colon Y \to \mathbb{P}^1$ over $\F_q$ having the following three properties.

\begin{itemize}

\item There exist monic irreducible polynomials $P_j \in \F_q[T]$ with $1 \leq j \leq d_{\lhd}(G)$ such that $g$ is \'etale away from the points of $\mathbb{P}^1$ corresponding to the (zeros of the) polynomials $P_j$, and $g$ is ramified at these points.
In other words, the set of roots of the polynomials $P_j$ is the branch locus of $g$.

\item We have $\operatorname{Aut}(g) \cong G$ and this group acts transitively on the geometric fibers of $g$.

\item The fiber of $g$ over $\infty$ contains an $\F_q$-point.

\end{itemize}

We refer to \cite{DeW14, BSEF, BSS} (and references therein) for some of the progress made on \cref{TamelyRamifiedBoston} and \cref{TamelyRamifiedBostonFF}.
For example, some results have been obtained in case $G$ is the symmetric group, or a dihedral group, assuming Schinzel's Hypothesis H on prime values of integral polynomials.

\cite{BM} goes on to propose, among other things, a quantitative version of their conjecture that we restate here.
For that, we fix conjugacy classes $C_1, \dots, C_{d_{\lhd}(G)}$ of $G$ that generate $G$, and put 
\[
C = \bigcup_{j=1}^{d_\lhd(G)} C_j.
\]
Furthermore, we assume that for each $1 \leq j \leq d_\lhd(G)$ and $g \in C_j$, every generator of the cyclic subgroup $\langle g \rangle$ lies in $C_j$.

For a number field $K$ put 
\[
\operatorname{ram}(K) = \{ p : p \ \text{is a prime number ramified in} \ K\}, \quad D_K = \prod_{p \in \operatorname{ram}(K)} p.
\]
Recall that $\operatorname{ram}(K)$ is the set of prime numbers dividing the discriminant of $K$, so $D_K$ is the radical of this discriminant.

For a positive integer $d$ and a positive real number $X$, 
we denote by $\Pi_d(X)$ the probability that a uniformly random positive squarefree integer less than $X$ has exactly $d$ (distinct) prime factors.

\begin{conj} \label{BMconjecture}

Let $X$ be a positive real number, and let $\mathcal E^C(G;X)$ be the family of (isomorphism classes of) totally real tamely ramified $G$-extensions $K$ of $\mathbb Q$ for which $D_K < X$ and some (equivalently, every) generator of each nontrivial inertia subgroup of $\operatorname{Gal}(K/\mathbb Q)$ lies in $C$.
Then there exists a positive real number $\delta^{G,C}$ such that as $X \to \infty$ we have
\begin{equation*}
\sum_{K \in \mathcal E^C(G;X)} \mathbf{1}_{|\operatorname{ram}(K)| = d_{\lhd}(G)} \sim  \delta^{G,C} \cdot \Pi_{d_{\lhd}(G)}(X) \cdot |\mathcal E^C(G;X)|.
\end{equation*}

\end{conj}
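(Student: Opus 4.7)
The plan is to bootstrap from the abelian case (verified in \cite{BM}) by descending along the derived series of $G$ and analysing the induced tower of tamely ramified embedding problems. Set $d = d_{\lhd}(G) = d(G^{\textup{ab}})$, let $\pi \colon G \twoheadrightarrow G^{\textup{ab}}$ be the quotient, and put $\bar C = \pi(C)$. For each abelian $L \in \mathcal E^{\bar C}(G^{\textup{ab}}; X)$, denote by $\nu(L)$ the number of totally real tame $G$-extensions $K/\mathbb Q$ whose fixed field of $G'$ is $L$, with $\operatorname{ram}(K) = \operatorname{ram}(L)$, and with every nontrivial inertia generator lying in $C$. The sum on the left-hand side of \cref{BMconjecture} then rewrites as $\sum_L \nu(L)\,\mathbf 1_{|\operatorname{ram}(L)| = d}$. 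The abelian case of \cref{BMconjecture} already handles the outer $L$-sum when $\nu \equiv 1$, so the entire issue is controlling $\nu(L)$ uniformly as $L$ varies.

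The key step is to establish a multiplicative formula
\[
\nu(L) \;=\; \eta^{G,C} \cdot \prod_{p \in \operatorname{ram}(L)} \lambda_p\!\bigl(\sigma_p(L)\bigr),
\]
where $\sigma_p(L) \in \bar C$ is the inertia class at $p$ in $L$, $\lambda_p(\cdot)$ is a local count of inertia lifts from $\bar C$ to $C$, and $\eta^{G,C}$ is a global factor absorbing cohomological obstructions. I would iterate up a central filtration of $G'$ and at each stage apply a Shafarevich--Koch presentation of the maximal tame Galois group of $\operatorname{Spec}\mathcal O_L[1/D_L]$ split at infinity, together with a tame Grunwald--Wang theorem to reduce the stage-wise embedding-problem count to local data prime by prime. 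Plugging the multiplicative formula back in, summing over $L$ via the abelian asymptotic, and then applying Selberg--Delange summation over squarefree $D_L$ with $d$ prime factors, would produce $\delta^{G,C} \cdot \Pi_d(X) \cdot |\mathcal E^C(G;X)|$, with $\delta^{G,C}$ identified as an absolutely convergent Euler product of the $\lambda_p(\sigma)$ normalised against the conjugacy-class weights appearing in Malle's heuristic for the denominator.

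The principal obstacle is the multiplicative formula for $\nu(L)$: one needs both that the embedding problem $1 \to G' \to G \to G^{\textup{ab}} \to 1$ is solvable without enlarging the ramification locus, \emph{and} that its solutions can be counted uniformly in $L$ with error terms summable over squarefree $D_L$ near $X$. For nilpotent $G$ this should be tractable via the Koch presentation of the maximal pro-$p$ tame quotient, where the count reduces to explicit $\F_p$-linear algebra governed by tame local symbols; for solvable $G$ it becomes a quantitative tame Scholz--Reichardt problem, attackable by iterating the nilpotent case along the solvable length while tracking the growth of $\eta^{G,C}$ under composita. For non-solvable $G$, however, even the qualitative existence statement is \cref{TamelyRamifiedBoston}, which remains open, and \cref{BMconjecture} is a genuine quantitative strengthening of it; here the proposal would at best produce a conditional result, contingent on both \cref{TamelyRamifiedBoston} and a version of Malle's conjecture with sharp conductor restrictions, neither of which is currently within reach.
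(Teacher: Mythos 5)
You are proposing a proof of \cref{BMconjecture}, but this statement is a \emph{conjecture} in the paper, not a theorem: the paper explicitly notes that it remains open for every nonabelian $G$, and what is actually proved is only a large-finite-field function-field analogue (\cref{MobiusVonMangoldtConjectureLargeFF}), by entirely different (Hurwitz-space/braid-group) methods. So there is no proof in the paper to match yours against, and your proposal should be judged as a standalone argument --- which, as you yourself concede in the final paragraph, it is not: for non-solvable $G$ it is conditional on \cref{TamelyRamifiedBoston} and on a sharp Malle-type count, and even for solvable $G$ the central claims are not established.

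Concretely, the gaps are these. First, the multiplicative formula $\nu(L) = \eta^{G,C}\prod_{p \in \operatorname{ram}(L)}\lambda_p(\sigma_p(L))$ is precisely the hard content, and it is unlikely to hold with a constant $\eta^{G,C}$ independent of $L$: in a Shafarevich--Koch presentation of the tame fundamental group of $\operatorname{Spec}\mathcal O_L[1/D_L]$, the number of solutions to the embedding problem $1 \to G' \to G \to G^{\textup{ab}} \to 1$ with prescribed inertia lifts in $C$ depends on global invariants of $L$ (class groups, units, Frobenius elements of auxiliary primes entering the relations), not only on local data at the ramified primes; a tame Grunwald--Wang statement controls solvability with prescribed local behavior, not a uniform count with error terms summable over squarefree $D_L < X$. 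Second, your base case is not available: the paper records only that \cite{BM} verifies the \emph{existence} conjecture (\cref{TamelyRamifiedBoston}) for abelian $G$, not the quantitative asymptotic of \cref{BMconjecture} with the $C$-restriction and the ordering by the radical $D_K$, so the ``outer $L$-sum'' cannot simply be handled by citation. Third, the right-hand side involves $|\mathcal E^C(G;X)|$, whose asymptotic is itself an open problem for most $G$ (as the paper points out), so even a complete analysis of the numerator would not yield the conjectured ratio without a matching count for the denominator. As it stands the proposal is a reasonable heuristic roadmap, but it does not prove the statement, and no proof should be expected here: the statement is presented in the paper as an open conjecture motivating the function-field theorem that is actually proved.
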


We note that the family $\mathcal{E}(G;X)$ of totally real tamely ramified $G$-extensions $K$ of $\mathbb Q$ with $D_K < X$
is finite by a classical result of Hermite.

\cref{BMconjecture} suggests that the chances of $D_K$ having $d_\lhd(G)$ prime factors asymptotically equal the chances of a squarefree number having $d_\lhd(G)$ prime factors. It is also possible to restate this heuristic in the following equivalent way. For $K \in \mathcal E^C(G;X)$ and $1 \leq j \leq d_\lhd(G)$, let $D_K(j)$ be the product of all the primes $p \in \operatorname{ram}(K)$ for which the generators of the inertia subgroups of $\operatorname{Gal}(K/\mathbb Q)$ for primes of $K$ lying over $p$ belong to $C_j$.
Then $D_K(j) \neq 1$ for every $1 \leq j \leq d_\lhd(G)$, and
\[
D_K = \prod_{j=1}^{d_\lhd(G)} D_K(j).
\]
\cref{BMconjecture} suggests that for every $1 \leq j \leq d_\lhd(G)$, the chances that $D_K(j)$ is a prime asymptotically equal the chances that a squarefree number is a prime, and that these events are asymptotically independent over $j$.

\begin{remark} \label{LocalizedRmk}

To elaborate on the last point, it is possible to give a localized version of \cref{BMconjecture} where we only sum over those $K$ in $\mathcal E^C(G;X)$ with $D_K(j)$ having order of magnitude $X_j$ for every $1 \leq j \leq d_\lhd(G)$ where $X_j \to \infty$ are real numbers whose product is $X$. In such a version we would replace $\Pi_{d_\lhd(G)}(X)$ with the product over $1 \leq j \leq d_\lhd(G)$ of the odds that a squarefree number with order of magnitude $X_j$ is a prime number, which is $\frac{\zeta(2)}{\log X_j}$ where $\zeta$ is the Riemann zeta function. We insist that each $X_j \to \infty$ (and not just their product) because if some $X_j$ does not grow, then (at least) one of the others is substantially more likely to be a prime as it is coprime to $X_j$ by definition.

\end{remark}

The reason we expect the appearance of an arithmetic correction factor $\delta^{G,C}$ is that, for a given prime number $r$, the probability that $r$ divides $D_K$ for a uniformly random $K \in \mathcal E^C(G;X)$ may not quite converge to $1/r$ (but perhaps to a different value) as $X \to \infty$. Heuristics of analytic number theory would then suggest that $\delta^{G,C}$ is a product over all the primes $r$ of certain expressions involving these limiting probabilities.

The odds that a uniformly random positive squarefree integer less than $X$ with exactly $d_\lhd(G)$ prime factors (which models $D_K$ for a random $K \in \mathcal E^C(G;X)$ with $|\operatorname{ram}(K)| = d_\lhd(G)$) is coprime to $|G|$ approach $1$ as $X \to \infty$.
This justifies (to some extent) our avoidance of wild ramification throughout, since even if we were to include wildly ramified extension in our count, there would be no apparent reason to change the conjectured asymptotic.

\begin{remark}

It is also possible to state a version of \cref{BMconjecture} for $\mathcal E(G;X)$ in place of $\mathcal E^C(G;X)$. Such a version (and its function field analog) may require more elaborate correction factors in place of $\delta^{G,C}$. We do not pursue this direction in the current work.

\end{remark}

Additional motivation for \cref{BMconjecture} comes from \cite{BE} suggesting (roughly speaking) that the Galois group over $\mathbb Q$ of the maximal extension of $\mathbb Q$ unramified away from a random set of primes of a given finite cardinality follows a certain distribution.
\cref{BMconjecture} can then be viewed as predicting the asymptotics of certain moments of such a distribution.

\cref{BMconjecture} remains open for every nonabelian $G$, see \cite[Section 4]{BM} for some progress. Speaking to its difficulty we note that even obtaining an asymptotic for $|\mathcal E(G;X)|$ is an open problem for most $G$. We refer to \cite[Introduction]{KoPa} for a discussion of results on counting number fields with various Galois groups. 
Most often one bounds by $X$ the discriminant of $K$ rather than its radical $D_K$ as we do here, see however \cite{Wood10} for some advantages of working with $D_K$.  
The conjectures in \cite{Mal} and the heuristic arguments of \cite{EV05} suggest that $|\mathcal E(G;X)| \sim \epsilon_G X \log^{\beta_G} X$ where $\epsilon_G$ is a positive real number and $\beta_G$ is a positive integer.

It may also be interesting to consider summing over $K \in \mathcal E^C(G;X)$ some other functions of the number of ramified primes in place of the indicator function of this number being equal to $d_{\lhd}(G)$, as in \cref{BMconjecture}.
For example, one may consider the M\"obius function of $D_K$ namely \[\mu(D_K) = (-1)^{|\operatorname{ram}(K)|}.\] 
\begin{conj} \label{MobiusConjecture}
As $X \to \infty$ we have
\begin{equation*} 
\sum_{K \in \mathcal E^C(G;X)} (-1)^{|\operatorname{ram}(K)|} = o(|\mathcal E^C(G;X)|).
\end{equation*}
\end{conj}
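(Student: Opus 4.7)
Since $D_K$ is squarefree, we have $(-1)^{|\operatorname{ram}(K)|} = \mu(D_K)$, so \cref{MobiusConjecture} is the assertion $\sum_{K \in \mathcal E^C(G;X)} \mu(D_K) = o(|\mathcal E^C(G;X)|)$. My plan is to stratify $\mathcal E^C(G;X)$ by the number of ramified primes and to deduce the cancellation from quantitative control on each stratum together with the classical cancellation of the M\"obius function.

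Write $N^C_d(X) := |\{K \in \mathcal E^C(G;X) : |\operatorname{ram}(K)| = d\}|$. The first (and main) task is to extend \cref{BMconjecture} to every admissible $d$, by establishing an asymptotic
\[
N^C_d(X) \sim \delta^{G,C}_d \cdot \Pi_d(X) \cdot |\mathcal E^C(G;X)|
\]
uniformly in $d_\lhd(G) \leq d \leq A \log \log X$ for some $A > 0$, with correction factors $\delta^{G,C}_d$ that are uniformly bounded and vary slowly in $d$. I would attack this by conditioning on the $C$-labeling of the prime factors of $D_K$ (which prime lies in which class $C_j$) and, for each such labeling, using class field theory to estimate the number of $G$-extensions whose branch locus is a specified labeled squarefree integer; the expected Euler-product structure of these local counts should produce factors $\delta_d^{G,C}$ obtained by summing the integer-level Euler product over all legal labelings of a $d$-prime squarefree modulus.

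Granting this, one sums
\[
\sum_{K \in \mathcal E^C(G;X)} \mu(D_K) = \sum_{d \geq d_\lhd(G)} (-1)^d N^C_d(X) \sim |\mathcal E^C(G;X)| \sum_{d} (-1)^d \delta^{G,C}_d \Pi_d(X),
\]
and the inner sum on the right is essentially the average of $\mu(n)$ on squarefree $n < X$ (up to the slowly varying weight $\delta^{G,C}_d$). This average is $o(1)$ by the prime number theorem, the contribution from $d \gg \log \log X$ is absorbed by the Hardy-Ramanujan bound on $\omega(n)$, and Abel summation against the slowly varying weight retains the cancellation.

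The principal obstacle is Step~1, whose difficulty subsumes that of \cref{BMconjecture} itself: even the main term of $|\mathcal E^C(G;X)|$ is unknown for general nonabelian $G$, and here we need a refinement uniform in $d$. An alternative avenue, more promising in the function field analog of \cref{MobiusConjecture}, is to assemble the sum into a Dirichlet (or power) series $\sum_K \mu(D_K) |D_K|^{-s}$ and to identify it with a twisted zeta function of a Hurwitz space. Writing the untwisted counting series as an Euler product via its cohomology, in the spirit of Ellenberg-Venkatesh-Westerland, one would then argue that the M\"obius twist cancels the dominant cohomological contribution at the edge of the critical strip---the geometric analog of how $(1-qu)$ kills the pole of the zeta function of $\F_q[T]$.
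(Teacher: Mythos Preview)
The statement you are attempting to prove is labeled a \emph{Conjecture} in the paper and is not proved there; over $\mathbb{Q}$ it remains open. What the paper actually proves is \cref{MobiusVonMangoldtConjectureLargeFF}, a large-finite-field function-field analog, via the Hurwitz-space/cohomological machinery together with the group-theoretic results of \cref{NotContainedAlternatingGroup} and \cref{SqfreeCorSn}. There is therefore no ``paper's own proof'' to compare against.

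Your plan has a genuine gap, which you in fact identify yourself: Step~1 asks for an asymptotic $N^C_d(X) \sim \delta^{G,C}_d \cdot \Pi_d(X) \cdot |\mathcal E^C(G;X)|$ uniformly in $d$. Already the single case $d = d_\lhd(G)$ is precisely \cref{BMconjecture}, which the paper notes is open for every nonabelian $G$; even the asymptotic for $|\mathcal E^C(G;X)|$ alone is unknown for most $G$. The suggested attack via class field theory and Euler products cannot work in general, since for nonabelian $G$ the local counts do not assemble multiplicatively in the way you describe. So the proposal is not a proof but a reduction to something at least as hard as several open problems.

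Your alternative avenue---interpreting the M\"obius-twisted sum cohomologically via a Hurwitz space and showing that the twist kills the top contribution---is exactly the mechanism the paper uses in the function-field setting (see \cref{THEproof}, where the sign representation $\textup{sgn}_1 \boxtimes \dots \boxtimes \textup{sgn}_k$ is shown to have no invariants under the relevant stabilizers). But that argument relies on the existence of $\mathsf{Hur}_{G,C}^{n_1,\dots,n_k}$ as a finite-type scheme, comparison with topological fundamental groups, and Deligne's Riemann Hypothesis, none of which have a known substitute over $\mathbb{Q}$. Even over $\mathbb{F}_q(T)$ with $q$ fixed this approach does not currently yield \cref{BMconjectureFF}.
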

The conjecture predicts that the number of ramified primes, for number fields $K \in \mathcal E^C(G;X)$, is even approximately as often as it is odd.

The reason for us to emphasize specifically the M\"obius function is its close relation with the aforementioned indicator function from \cref{BMconjecture}.
\cite{BSS} shows that, for every integer $m \geq 2$, there exists an $S_m$-extension $K_m/\mathbb Q$ such that the number of primes of $\mathbb Q$ ramified in $K_m$ is at most $4$, falling just a little short of the conjecture from \cite{BM} predicting the existence of an $S_m$-extension ramified at only $d_{\lhd}(S_m) = 1$ prime.  
The problem of finding such an extension, and the use of sieve theory in \cite{BSS}, is subject to the parity barrier making it challenging to even produce an $S_m$-extension ramified at an odd number of primes, let alone a single prime. A sieve is also employed in \cite{TT} to obtain a lower bound on the number of $S_3$-extensions of $\mathbb Q$ with no more than $3$ ramified primes, and $S_4$-extensions with no more than $8$ ramified primes. Here, parity is one of the barriers to obtaining a lower bound of the right order of magnitude.
Another application of sieve theory in this context is \cite[Theorem 7.11]{BG} obtaining an upper bound in \cref{BMconjecture} for $G = S_4$.
Parity is one of the barriers to improving this upper bound.
Because of this, and because of the ability to express the indicator function of the prime numbers using the M\"obius function, we view \cref{MobiusConjecture} as a step toward \cref{BMconjecture}.

We would like to state analogs over $\F_q(T)$ of \cref{BMconjecture} and \cref{MobiusConjecture}. For that, we recall that the norm of a nonzero polynomial $D \in \F_q[T]$ is given by
$
|D| = |\F_q[T]/(D)| = q^{\deg D}.
$
For a finite extension $K/\F_q(T)$ we put 
\[
\operatorname{ram}(K) = \{ P \in \F_q[T] : P \ \text{is a monic irreducible polynomial ramified in} \ K\}, \quad D_K = \prod_{P \in \operatorname{ram}(K)} P.
\]


Our analogs over $\F_q(T)$ will be modeled on localized versions of \cref{BMconjecture} and \cref{MobiusConjecture}, as discussed in \cref{LocalizedRmk}.
To state these analogs, we need more notation.

\begin{defi}

Let $p$ be a prime number not dividing $|G|$, let $q$ be a power of $p$, and let $n_1, \dots, n_{d_\lhd(G)}$ be positive integers.
Let $\mathcal E_q^C\left(G;n_1, \dots, n_{d_\lhd(G)}\right)$ be the family of regular $G$-extensions $K$ of $\F_q(T)$ split completely at $\infty$ and satisfying the following two conditions.
\begin{itemize}

\item Every generator of each nontrivial inertia subgroup of $\operatorname{Gal}(K/\F_q(T))$ lies in $C$.

\item For $1 \leq j \leq d_\lhd(G)$ let $D_K(j)$ be the product of all the $P \in \operatorname{ram}(K)$ for which the generators of the inertia subgroups of $\operatorname{Gal}(K/\F_q(T))$ for primes of $K$ lying over $P$ belong to $C_j$. Then \[\deg D_K(j) = n_j.\]

\end{itemize}

\end{defi}

For every $K \in \mathcal E_q^C\left(G;n_1, \dots, n_{d_\lhd(G)}\right)$ we have
\[
D_K =  \prod_{j=1}^{d_\lhd(G)} D_K(j).
\]

As in the number field case, the family $\mathcal E_q^C\left(G;n_1, \dots, n_{d_\lhd(G)}\right)$ is finite.
\cite{ETW17} provides upper bounds of the (conjecturally) right order of magnitude on $\left|\mathcal E_q^C\left(G;n_1, \dots, n_{d_\lhd(G)}\right)\right|$ for all $q$ larger than a certain quantity depending on $C$. 
In case $C \cap H$ is either empty or a single conjugacy class of $H$ for every subgroup $H$ of $G$, near-optimal upper and lower bounds on $|\mathcal E_q^C(G;n)|$ are obtained in \cite{EVW}. 

We also recall that the zeta function of $\F_q[T]$ is given for $s \in \mathbb C$ by 
\[
\zeta_q(s) = \sum_{\substack{f \in \F_q[T] \\ f \text{ is monic}}} |f|^{-s} = \frac{1}{1 - q^{1-s}}, \quad \textup{Re}(s) > 1.
\]

\begin{conj} \label{BMconjectureFF}

Fix a prime power $q$ coprime to $|G|$.
Then there exists a positive real number $\delta^{G,C}_q$ such that as $n_1, \dots, n_{d_\lhd(G)} \to \infty$ we have
\[
\sum_{K \in \mathcal E_q^C\left(G;n_1, \dots, n_{d_\lhd(G)}\right)} \mathbf{1}_{|\operatorname{ram}(K)| = d_{\lhd}(G)} \sim \delta^{G,C}_q \cdot \prod_{j=1}^{d_\lhd(G)} \frac{\zeta_q(2)}{n_j} \cdot \left|\mathcal E_q^C\left(G;n_1, \dots, n_{d_\lhd(G)}\right)\right|.
\]
Moreover, as soon as at least one of the $n_j$ tends to $\infty$ we have
\[
\sum_{K \in \mathcal E_q^C\left(G;n_1, \dots, n_{d_\lhd(G)}\right)} (-1)^{|\operatorname{ram}(K)|} =  o\left( \left| \mathcal E_q^C\left(G;n_1, \dots, n_{d_\lhd(G)}\right) \right|\right).
\]

\end{conj}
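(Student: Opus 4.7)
The plan is to reinterpret both sums as $\mathbb{F}_q$-point counts on a Hurwitz moduli space, apply the Grothendieck--Lefschetz trace formula in the large-$q$ regime, and transport analytic identities for the zeta function of $\mathbb{F}_q[T]$ through the resulting étale cover. Concretely, I would identify $\mathcal E_q^C(G; n_1, \ldots, n_{d_\lhd(G)})$ with the set of $\mathbb{F}_q$-points of a Hurwitz stack $\mathcal H$ parametrizing $G$-covers of $\mathbb{P}^1$ that are split at $\infty$ and ramified above $n_1 + \cdots + n_{d_\lhd(G)}$ geometric points of $\mathbb{A}^1$, with exactly $n_j$ of these having inertia class in $C_j$. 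This $\mathcal H$ is a finite étale cover of the product of configuration spaces $\prod_j \mathrm{Conf}_{n_j}(\mathbb{A}^1)$, and its geometric components are controlled by orbits of the product braid group $\prod_j \mathrm{Br}_{n_j}$ acting on Nielsen tuples in $C_1^{n_1} \times \cdots \times C_{d_\lhd(G)}^{n_{d_\lhd(G)}}$. A structure-group computation for the direct product of racks $C_1 \sqcup \cdots \sqcup C_{d_\lhd(G)}$, announced in the abstract, should enumerate these orbits and provide a product decomposition mirroring that of the base configuration space.

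For large $q$, the Grothendieck--Lefschetz trace formula combined with homological stability (in the style of Ellenberg--Venkatesh--Westerland or Randal-Williams, adapted to direct products of racks) would yield
\[
|\mathcal H(\mathbb{F}_q)| = \alpha_q \cdot q^{n_1 + \cdots + n_{d_\lhd(G)}} + O\bigl(q^{n_1 + \cdots + n_{d_\lhd(G)} - 1/2}\bigr),
\]
with $\alpha_q$ coming from the top compactly supported cohomology---equivalently, from the geometric components weighted by Frobenius eigenvalues. Restricting the count to the substack where each $D_K(j)$ is irreducible invokes the prime polynomial theorem for $\mathbb{F}_q[T]$ in each coordinate and yields a main term $\alpha_q \cdot \prod_j \frac{q^{n_j}}{n_j}$, which after dividing by $|\mathcal E_q^C|$ recovers the predicted $\prod_j \zeta_q(2)/n_j$; here $\delta_q^{G,C}$ is built from $\alpha_q$ and the factor $\zeta_q(2) = 1/(1 - q^{-1})$ converting between all monic polynomials and squarefree ones in the base. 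The Möbius sum is treated analogously: since $\mu(D_K) = \prod_j \mu(D_K(j))$ (the $D_K(j)$ being pairwise coprime and squarefree), stratifying the base by factorization type and invoking $1/\zeta_q(s) = 1 - q^{1-s}$ collapses $\sum_{\deg f = n_j} \mu(f)$ to $0$ as soon as some $n_j \geq 2$, producing the required $o(|\mathcal E_q^C|)$.

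The main obstacle is transporting these base-level identities through the étale cover $\mathcal H \to \prod_j \mathrm{Conf}_{n_j}(\mathbb{A}^1)$: for a given $\mathbb{F}_q$-configuration, the number of rational $G$-covers above it depends on how Frobenius permutes the Nielsen fiber, and a priori this can destroy the product factorization across the $d_\lhd(G)$ types that both the prime and Möbius identities rely on. The role of the structure group of the direct product of racks is then to produce exactly such a product decomposition of the braid orbits, so that the Nielsen-fiber count over an $\mathbb{F}_q$-configuration factors across types up to an error absorbed into the power-saving $O(q^{-1/2})$. A secondary issue is the positivity of $\alpha_q$, hence of $\delta_q^{G,C}$; the hypothesis that each $C_j$ is closed under taking generators of cyclic subgroups appears to be the input needed to guarantee that the main component carries a nontrivial Frobenius-fixed contribution, by ensuring $C$ is stable under the Galois action coming from Frobenius permuting the geometric roots of an irreducible factor.
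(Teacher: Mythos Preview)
The statement you are attempting to prove is labeled a \emph{conjecture} in the paper, and the paper does not claim to prove it; it explicitly says it is ``not clear to us how to make additional (or significant) progress'' on this conjecture for fixed $q$. What the paper does prove is a large-$q$ analog (its main theorem) in which $n_1,\dots,n_k$ are fixed and $q\to\infty$.

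Your proposal conflates these two regimes. You write ``for large $q$, the Grothendieck--Lefschetz trace formula combined with homological stability\dots'', but these are tools for opposite limits: the large-$q$ argument the paper carries out needs only the top compactly supported cohomology together with Deligne's bound on lower degrees, whereas homological stability \`a la Ellenberg--Venkatesh--Westerland is what one would need for fixed $q$ and $n_j\to\infty$, and that is not available in the generality required here. Your error term $O\bigl(q^{n_1+\cdots+n_k-1/2}\bigr)$ is a large-$q$ bound and says nothing as $n_j\to\infty$ with $q$ fixed.

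There is also a genuine gap in the M\"obius step even in the large-$q$ regime. You propose to use $\sum_{\deg f=n_j}\mu(f)=0$ on the base, but the quantity of interest is $\sum_K \mu(D_K)$, in which each base configuration is weighted by the number of $\F_q$-rational covers above it, and this weight depends on how Frobenius acts on the Nielsen fiber. You correctly flag this as ``the main obstacle'' but do not resolve it: a product decomposition of braid orbits is not by itself enough to give the needed equidistribution of Frobenius. What the paper actually proves (for its large-$q$ theorem) is that the image in $S_{n_1}\times\dots\times S_{n_k}$ of the braid stabilizer of each Nielsen tuple is the full product of symmetric groups; this forces the pullback to the Hurwitz space of any nontrivial irreducible $S_{n_1}\times\dots\times S_{n_k}$-representation (in particular the sign) to have no invariants, killing the relevant top cohomology and producing the cancellation. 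A minor further point: the correct base is not $\prod_j\mathrm{Conf}_{n_j}(\mathbb A^1)$ but the colored configuration space in which all $n$ points are pairwise distinct across colors, since $D_K=\prod_j D_K(j)$ is squarefree.
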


The factor $\frac{\zeta_q(2)}{n_j} = \frac{q}{(q-1)n_j}$ is (a good approximation for) the probability that a uniformly random monic squarefree polynomial of degree $n_j \geq 2$ over $\F_q$ is irreducible.

Using the (very) special cases of Schinzel's Hypothesis H and the Chowla conjecture established in \cite{SS20}, it is perhaps possible to make partial progress on \cref{BMconjectureFF} for certain groups. 
It is however not clear to us how to make additional (or significant) progress on \cref{BMconjectureFF}, going beyond what's known about \cref{BMconjecture}. 


%
%


In this paper we prove a large finite field version of \cref{BMconjectureFF}. 
In the large finite field regime, instead of fixing $q$ and taking the $n_j$ to $\infty$, we fix the $n_j$ and take $q$ to $\infty$.
One indication that this regime is more tractable is given by \cite[Proof of Theorem 1.4]{LWZB} that obtains (among other things) the leading term of the asymptotic for $\left|\mathcal E_q^C\left(G;n_1, \dots, n_{d_\lhd(G)}\right)\right|$.
Another indication of the tractability of the large finite field regime is given by the resolution (in this setting) of very difficult problems in analytic number theory, such as Schinzel's Hypothesis H, see \cite{Ent} and references therein.


We have $\lim_{q \to \infty} \zeta_q(2) = 1$, and we believe that the arithmetic correction constants $\delta_q^{G,C}$ also converge to $1$ as $q \to \infty$.
This belief is based in part on the convergence to $1$ of various singular series constants over $\F_q[T]$ as $q \to \infty$, see also \cite{Ent}.
Our main result provides further evidence for this belief.

\begin{thm} \label{MobiusVonMangoldtConjectureLargeFF}

Fix $n_1, \dots, n_{d_\lhd(G)}$ such that $n_j > |C_j|$ for some $1 \leq j \leq d_\lhd(G)$.
Then as $q \to \infty$ along prime powers coprime to $|G|$ we have
\[
\sum_{K \in \mathcal E_q^C\left(G;n_1, \dots, n_{d_\lhd(G)}\right)} (-1)^{|\operatorname{ram}(K)|} =  o\left(\left|\mathcal E_q^C\left(G;n_1, \dots, n_{d_\lhd(G)}\right)\right|\right).
\]

Fix $n_1, \dots, n_{d_\lhd(G)}$ large enough. 
Then as $q \to \infty$ along prime powers coprime to $|G|$ we have
\[
\sum_{K \in \mathcal E_q^C\left(G;n_1, \dots, n_{d_\lhd(G)}\right)} \mathbf{1}_{|\operatorname{ram}(K)| = d_{\lhd}(G)} \sim 
\frac{\left|\mathcal E_q^C\left(G;n_1, \dots, n_{d_\lhd(G)}\right)\right|}{n_1 \cdots n_{d_\lhd(G)}}.
\]

\end{thm}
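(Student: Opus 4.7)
The plan is to reformulate $\mathcal{E}_q^C(G; n_1, \ldots, n_d)$ (with $d = d_\lhd(G)$ and $n = n_1 + \cdots + n_d$) as the $\mathbb{F}_q$-points of a Hurwitz scheme, and to analyze the weighted sums via the Grothendieck--Lefschetz trace formula combined with a rack-theoretic analysis of the Hurwitz monodromy. Each $K \in \mathcal{E}_q^C(\ldots)$ corresponds to a $G$-cover $Y \to \mathbb{P}^1_{\mathbb{F}_q}$ with branch divisor $\sum_j D_K(j)$, local inertia generators in $C_j$, and a marked $\mathbb{F}_q$-point over $\infty$. These covers are precisely the $\mathbb{F}_q$-points of a Hurwitz scheme $\mathrm{Hur} = \mathrm{Hur}^C_{n_1, \ldots, n_d}$, smooth of dimension $n$ and finite \'etale over the configuration space $\mathrm{Conf} \subset \prod_j \mathrm{Sym}^{n_j}(\mathbb{A}^1_{\mathbb{F}_q})$ of pairwise coprime squarefree $d$-tuples.

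The starting point for both asymptotics is the estimate
\[
|\mathcal{E}_q^C(G; n_1, \ldots, n_d)| = |\mathrm{Hur}(\mathbb{F}_q)| = N_q \cdot q^n + O(q^{n - 1/2})
\]
coming from Lang--Weil (or Deligne's Weil II) applied to the geometric components of $\mathrm{Hur}$, where $N_q$ counts the components defined over $\mathbb{F}_q$. A rack-theoretic analysis, using the structure group of the rack $C = \bigsqcup_j C_j$ (and of products of such racks indexed by the types), identifies these components and their Galois structure once the $n_j$ are sufficiently large.

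For the M\"obius sum, I introduce a rank-one local system $\mathcal{M}$ on $\mathrm{Conf}_{\overline{\mathbb{F}}_q}$ whose Frobenius trace at $(D_1, \ldots, D_d)$ equals $\mu(D_1 \cdots D_d) = (-1)^{|\operatorname{ram}|}$; it arises as a twist of the pullback of the sign character of $\prod_j S_{n_j}$ along the cover by ordered branch points. The Grothendieck--Lefschetz formula becomes
\[
\pm \sum_K (-1)^{|\operatorname{ram}(K)|} = \sum_i (-1)^i \operatorname{tr}\bigl( \mathrm{Frob}_q \mid H^i_c(\mathrm{Hur}_{\overline{\mathbb{F}}_q}, \pi^* \mathcal{M}) \bigr),
\]
and by Deligne's bounds the required $o(|\mathcal{E}|)$ estimate reduces to the vanishing of $H^{2n}_c(\mathrm{Hur}, \pi^* \mathcal{M})$ on every geometric component, equivalently to the absence of $\pi_1$-invariants.

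The main obstacle is establishing this non-invariance from the hypothesis $n_{j_0} > |C_{j_0}|$ for some $j_0$. Pigeonhole forces every Nielsen tuple in a given component to have two indices of type $j_0$ carrying the same element of $C_{j_0}$; after rearrangement within the braid orbit --- the hard rack-theoretic step, requiring control of the structure group of a direct product of racks --- a trivial-swap braid move between such a pair produces a loop on $\mathrm{Hur}$ whose image in $\pi_1(\mathrm{Conf})$ projects to a transposition in $S_{n_{j_0}}$, hence acts as $-1$ on $\pi^* \mathcal{M}$. For the prime-indicator statement, I use $\mathbf{1}_{|\operatorname{ram}(K)| = d} = \prod_j \mathbf{1}_{D_K(j) \text{ prime}}$ and replace $\mathcal{M}$ by the local system on $\mathrm{Conf}$ arising from the class function $\prod_j \mathbf{1}_{\text{full } n_j\text{-cycle}}$ on $\prod_j S_{n_j}$; the same structure-group input, now applied to a wider collection of characters of $\prod_j S_{n_j}$, yields the analogous cohomological vanishing for $n_j$ large enough and reduces the count to the $\prod_j S_{n_j}$-average of this class function, namely $\prod_j 1/n_j$, giving the asymptotic $|\mathcal{E}|/(n_1 \cdots n_d)$.
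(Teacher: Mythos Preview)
Your overall strategy matches the paper's: realize $\mathcal E_q^C$ as $\F_q$-points of a Hurwitz scheme, pull back the sign (resp.\ indicator) local system from $\textup{Conf}^{n_1,\dots,n_k}$, apply Grothendieck--Lefschetz and Deligne to reduce to $H^{2n}_c$, and then translate the vanishing of top cohomology into a statement about the image in $S_{n_1}\times\dots\times S_{n_k}$ of the stabilizer of a Nielsen tuple in $B_{n_1,\dots,n_k}$. The von Mangoldt half of your sketch is correct in spirit; the paper makes it explicit via $\Lambda_j(f)=\sum_i(-1)^i\chi_{\wedge^i\mathrm{std}_j}(\sigma_f)$ and then uses that the stabilizer surjects onto $S_{n_1}\times\dots\times S_{n_k}$ for large $n_j$, which is exactly where the structure-group machinery for the product rack $C\times\mathcal T_2$ enters.

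Where your proposal diverges is the M\"obius part, and this matters for the stated hypothesis. You write that after pigeonhole one must ``rearrange within the braid orbit --- the hard rack-theoretic step, requiring control of the structure group of a direct product of racks'' before performing the trivial swap. In the paper no such rearrangement is needed: if $x_\alpha=x_\beta$ already in the \emph{given} tuple $s$, the explicit braid
\[
R_{\alpha,\beta}=\sigma_{\beta-1}\cdots\sigma_{\alpha+1}\,\sigma_\alpha\,\sigma_{\alpha+1}^{-1}\cdots\sigma_{\beta-1}^{-1}
\]
stabilizes $s$ (using only that $C$ is a quandle, i.e.\ $c^c=c$) and projects to the transposition $(\alpha\ \beta)\in S_{n_{j_0}}$. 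This is completely elementary --- no structure groups, no product racks, no Conway--Parker input --- and it is precisely why the weak hypothesis ``$n_{j_0}>|C_{j_0}|$ for \emph{some} $j_0$'' suffices. If you actually routed the M\"obius argument through the $X\times\mathcal T_2$ structure-group analysis as you suggest, you would be forced to take \emph{all} $n_j$ large (that machinery is ineffective and needs every component count large), and you would prove a strictly weaker statement than the one claimed. So the heavy rack theory belongs only to the second half of the theorem; for the first half, replace your ``hard step'' by the one-line computation with $R_{\alpha,\beta}$.
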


By `large enough' we mean larger than a certain function of $|G|$ (or in fact, a function of $|C|$).
This dependence on $|G|$ obtained from our argument (or rather from arguments of Conway, Parker, Fried, and V\"olklein) is ineffective, but it is likely possible to give a less elementary form of the argument that is effective. We do not pursue this direction in the current work.

The error term with which we obtain the asymptotics in \cref{MobiusVonMangoldtConjectureLargeFF} is 
\[
O\left(\frac{\left|\mathcal E_q^C\left(G;n_1, \dots, n_{d_\lhd(G)}\right)\right|}{\sqrt q}\right)
\]
where the implied constant depends $n_1, \dots, n_{d_\lhd(G)}$.
With the geometric setup in the proof of \cref{MobiusVonMangoldtConjectureLargeFF} we are well-positioned to explicate (and improve) this dependence by a study of Betti numbers, but we do not do it in this paper.
We also do not attempt to improve the dependence of the error term on $q$ by studying cohomology beyond the top degree.

Our proof of \cref{MobiusVonMangoldtConjectureLargeFF} allows us to obtain the frequency with which $D_K$ attains any given factorization type (such as the product of $d_\lhd(G)$ irreducible polynomials). 
Put differently, we can obtain the asymptotic for the sum over $K \in \mathcal E_q^C\left(G;n_1, \dots, n_{d_\lhd(G)}\right)$ of any factorization function of $D_K$ (such as $\mu(D_K)$).
For a more detailed discussion of factorization types and factorization functions see \cite{Gor20}.
We can then see that the factors $D_K(j)$ of $D_K$ indeed behave as random independent monic squarefree polynomials of degree $n_j$, as far as their factorizations into monic irreducible polynomials are concerned.

\section{Sketch of a Proof of \cref{MobiusVonMangoldtConjectureLargeFF}}

We describe a simplified variant of the argument we use to prove \cref{MobiusVonMangoldtConjectureLargeFF}.
In this sketch, for simplicity we will mostly restrict to the special case $d_\lhd(G) = 1$.
Towards the very end of our sketch, we will comment on what the actual argument is, and mention some of the additional difficulties involved.

For a squarefree monic polynomial $f$ of degree $n \geq 1$ over $\F_q$ we denote by $\sigma_f$ the conjugacy class in $S_n$ of the permutation raising the roots of $f$ to $q$th power. The lengths of the cycles of $\sigma_f$ are the degrees of the irreducible factors of $f$. 
Let us restate this in a more geometric language.

Let $\textup{Conf}^{n}$ be the configuration space of $n$ unordered distinct points on the affine line over $\overline{\F_q}$.
Viewing these $n$ points as the roots of a monic squarefree polynomial of degree $n$ over $\overline{\F_q}$, one endows $\textup{Conf}^{n}$ with the structure of a variety over $\F_q$, namely
\[
\textup{Conf}^{n} = \{(a_0, \dots, a_{n-1}) : \operatorname{Discriminant}_T(a_0 + a_1 T + \dots + a_{n-1}T^{n-1} + T^n) \neq 0 \}.
\]
There exists a continuous homomorphism $\lambda$ from the (profinite) \'etale fundamental group $\pi_1^{\textup{\'et}}(\textup{Conf}^{n})$ to $S_n$ such that for every $f \in \textup{Conf}^{n}(\F_q)$ (viewed as a monic squarefree polynomial of degree $n$ over $\F_q$) the conjugacy class in $S_n$ of the value of $\lambda$ at the element $\operatorname{Frob}_{f} \in \pi_1^{\textup{\'et}}(\textup{Conf}^{n})$ is $\sigma_f$.

In fact, this definition of $\textup{Conf}^n$ works over an arbitrary field, and slightly abusing notation we denote by $\textup{Conf}^n(\mathbb C)$ the configurations space of $n$ complex numbers (equivalently, monic polynomials of degree $n$ over $\mathbb C$ with no repeated roots). This space is a manifold, and its fundamental group is the braid group on $n$ strands, namely
\[
\pi_1(\textup{Conf}^n(\mathbb C)) = B_n = \langle \sigma_1, \dots, \sigma_{n-1} : \sigma_i \sigma_j = \sigma_j \sigma_i \text{ for } i > j+1, \ \sigma_i \sigma_{i+1} \sigma_i = \sigma_{i+1} \sigma_i \sigma_{i+1} \text{ for } i < n-1 \rangle.
\]
The counterpart of the homomorphism $\lambda \colon \pi_1^{\textup{\'et}}(\textup{Conf}^{n}) \to S_n$ in this setting is the homomorphism from $B_n$ to $S_n$ that maps the generator $\sigma_i$ to the transposition $(i \ i+1) \in S_n$ for every $1 \leq i \leq n-1$.

For every prime power $q$ coprime to $|G|$ there exists an $\F_q$-variety $\mathsf{Hur}_{G,C}^n$ parametrizing $G$-covers of $\mathbb P^1$ branched at $n$ points with monodromy of type $C$ and a choice of a point over $\infty$.
To simplify matters in this sketch, we assume that $\mathsf{Hur}_{G,C}^n$ is geometrically connected (this assumption is not satisfied for many choices of $G$, $C$, and $n$, and when it fails to hold, the connected components of $\mathsf{Hur}_{G,C}^n$ are not necessarily defined over $\F_q$).
We make the identification
\[
\mathsf{Hur}_{G,C}^n(\F_q) = \mathcal E_q^C(G;n).
\]

There is a finite \'etale map $\mathsf{Hur}_{G,C}^n \to \textup{Conf}^{n}$ that on the level of $\F_q$-points sends every $K \in \mathcal E_q^C(G;n)$ to $D_K$.
Therefore the distribution of the factorization type of $D_K$ as $K$ ranges over $\mathcal E_q^C(G;n)$, or rather the distribution of $\sigma_{D_K}$ among the conjugacy classes of $S_n$, is the distribution of $\lambda(\operatorname{Frob}_{D_K})$ among the conjugacy classes of $S_n$ as $K$ ranges over $\mathsf{Hur}_{G,C}^n(\F_q)$.  

In view of our simplifying assumption, the variety $\mathsf{Hur}_{G,C}^n$ is a connected finite \'etale cover of $\textup{Conf}^{n}$, so we can view $\pi_1^{\textup{\'et}}(\mathsf{Hur}_{G,C}^n)$ as an open subgroup of $\pi_1^{\textup{\'et}}(\textup{Conf}^{n})$. By a version of Chebotarev's density theorem, the aforementioned distributions are governed by the image $H$ of $\pi_1^{\textup{\'et}}(\mathsf{Hur}_{G,C}^n)$ in $S_n$ under the homomorphism $\lambda \colon \pi_1^{\textup{\'et}}(\textup{Conf}^{n}) \to S_n$.
That is, for every conjugacy class $\Delta$ of $S_n$, as $q \to \infty$ along prime powers coprime to $|G|$ we have
\[
|\{K \in \mathcal E_q^C(G;n) : \sigma_{D_K} = \Delta \}|  = |\{K \in \mathsf{Hur}_{G,C}^n(\F_q)  : \lambda(\operatorname{Frob}_{D_K}) \in \Delta \}|  \sim \frac{|\Delta \cap H|}{|H|} \cdot |\mathcal E_q^C(G;n)|.
\]
For our purposes, it would be sufficient to show that $H = S_n$ for $n$ large enough.
The subgroup $H$ does not change if we work with \'etale fundamental groups over $\overline{\F_q}$ rather than over $\F_q$ as we did so far.

The group $B_n$ acts on $C^n$ (the set of $n$-tuples of elements from $C$) from the right by 
\begin{equation} \label{BraidsInAction}
(c_1, \dots, c_{i-1}, c_i, c_{i+1}, c_{i+2} \dots, c_n)^{\sigma_i} = (c_1, \dots, c_{i-1}, c_{i+1}, c_i^{c_{i+1}}, c_{i+2} \dots, c_n), \quad 1 \leq i \leq n-1,
\end{equation}
where $c_1, \dots, c_n \in C$ and for group elements $g,h$ we use the notation $g^h$ for $h^{-1} g h$.
For (an appropriately chosen) $s \in C^n$ whose entries generate $G$, the stabilizer of $s$ in $B_n$ is the counterpart of the subgroup $\pi_1^{\textup{\'et}}(\mathsf{Hur}_{G,C}^n)$ of $\pi_1^{\textup{\'et}}(\textup{Conf}^{n})$.
In particular, one can show that $H$ is the image in $S_n$ of the stabilizer of $s$ in $B_n$.
At this point we have reduced the arithmetic problem we wanted to solve to a group-theoretic question.
This kind of reduction is by now standard, being emplyed frequently in the proofs of results in the large finite field limit, see for instance \cite{Ent}, and \cite{LWZB}. 

The technical heart of this paper lies in showing that for $n$ large enough we indeed have $H = S_n$.
Recalling that the pure braid group $PB_n$ is the kernel of the homomorphism from $B_n$ to $S_n$, we see that the equality $H = S_n$
is equivalent to the transitivity of the action of $PB_n$ on the orbit of $s$ under the action of $B_n$.
As a result, our work gives an additional justification for \cite[Heuristic 4.7]{BE} suggesting the transitivity of a very similar action of a (profinite) group closely related to $PB_n$.

To state our technical result in greater generality, we need to recall a few additional notions.
\begin{defi}

A rack is a set $X$ with a binary operation $x^y$ for $x,y \in X$ such that for every $y \in X$ the function $x \mapsto x^y$ is a bijection from $X$ to $X$,
and for every $x,y,z \in X$ we have $(z^x)^y = (z^{y})^{x^y}$.
If in addition $x^x = x$ for all $x \in X$ then we say that $X$ is a quandle.

\end{defi}

As an example of a quandle we can take $C$ with the binary operation of conjugation. For a rack $X$ we have a right action of $B_n$ on $X^n$ using the formula in \cref{BraidsInAction}.

\begin{defi} \label{DefRackComponents}

Let $X$ be a finite rack. 
Define the (directed unlabeled) Schreier graph of $X$ to be the graph whose set of vertices is $X$ and whose set of edges is $\{(x, x^y) : (x,y) \in X \times X\}$.
We say that $X$ is connected if its Schreier graph is connected, and we call the connected components of the Schreier graph of $X$ simply `the connected components of $X$'.

\end{defi}

The Schreier graph may contain loops. Two vertices in the Schreier graph are weakly connected if and only if they are strongly connected.
The quandle $C$ is an example of a connected rack, and the Schreier graph of $C$ is the Schreier graph of the right action of $G$ on $C$ by conjugation, where $C$ also plays the role of a generating set of $G$.

\begin{defi} \label{DefSubRackGen}

Let $X$ be a finite rack. We say that a subset $X_0$ of $X$ is a subrack if for all $x,y \in X_0$ we have $x^y \in X_0$. 
We say that elements $x_1, \dots, x_n \in X$ generate $X$ if there is no proper subrack of $X$ containing $x_1, \dots, x_n$.

\end{defi}

Our technical result is `an $H = S_n$ theorem', but in the generality of racks.

\begin{thm} \label{ConnectedLargeMonodromy}

Let $X$ be a connected finite rack. 
Let $n$ be a sufficiently large positive integer, and let $(x_1, \dots, x_n) \in X^n$ be an $n$-tuple of elements from $X$ such that $x_1, \dots, x_n$ generate $X$.
Denote by $\operatorname{Stab}(x_1, \dots, x_n)$ the stabilizer in $B_n$ of $(x_1, \dots, x_n)$.
Then the image $H$ of $\operatorname{Stab}(x_1, \dots, x_n)$ under the homomorphism from $B_n$ to $S_n$ is either $A_n$ or $S_n$.
If moreover $X$ is a quandle, then $H = S_n$.

\end{thm}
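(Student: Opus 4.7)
The plan is to exhibit inside $\operatorname{Stab}_{B_n}(x_1, \ldots, x_n)$ enough braids whose images in $S_n$ generate $A_n$ (all of $S_n$ in the quandle case), and then conclude by a Jordan-style argument: transpositions in $H$ forming a connected graph on $\{1, \ldots, n\}$ already generate $S_n$, while a transitive subgroup containing enough 3-cycles generates $A_n$. The basic mechanism in the quandle case is the following. If a tuple $s' = (x_1, \ldots, x_n) \cdot \beta$ in the $B_n$-orbit of $(x_1, \ldots, x_n)$ satisfies $s'_i = s'_{i+1}$, then $x^x = x$ forces $\sigma_i \cdot s' = s'$, so $\beta \sigma_i \beta^{-1} \in \operatorname{Stab}(x_1, \ldots, x_n)$ and projects to the transposition $(\pi(\beta)(i), \pi(\beta)(i+1)) \in H$. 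Each tuple in the orbit carrying an adjacent repeat therefore contributes a transposition to $H$; varying $\beta$, one hopes to cover enough of $S_n$ to generate it.

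To ensure the $B_n$-orbit of $(x_1, \ldots, x_n)$ is rich enough for this, I would appeal to a Conway--Parker--Fried--V\"olklein type stability theorem, in the form adapted to racks following Ellenberg--Venkatesh--Westerland: for $n$ larger than an explicit function of $|X|$, the $B_n$-orbit of a generating tuple in $X^n$ is determined by a single invariant lying in an appropriate Schur multiplier / $H_2$-type group. The consequence needed is that the orbit is complete enough so that, for each desired pair of positions $(i, j)$, some $\beta \in B_n$ and $k$ satisfy both that $(x_1, \ldots, x_n) \cdot \beta$ has equal entries in positions $k, k+1$ and that $\{\pi(\beta)(k), \pi(\beta)(k+1)\} = \{i, j\}$. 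This yields every transposition in $H$ and hence $H = S_n$ in the quandle case.

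For a general rack the identity $x^x = x$ fails, but $x \mapsto x^x$ is a bijection of the finite set $X$ of some finite order, and $\sigma_i$ merely cycles a tuple with $s'_i = s'_{i+1} = x$ through $(x, x), (x, x^x), (x^x, x^{x^x}), \ldots$ without fixing it. One compensates by working with longer equal runs $s'_i = s'_{i+1} = s'_{i+2}$ and combinations such as $\sigma_i^a \sigma_{i+1}^b \sigma_i^c$, with exponents tuned so that the net effect on the run is trivial while the image in $S_n$ is a 3-cycle. The CPFV mechanism again supplies enough such tuples in the orbit, and combined with transitivity of $H$ (also furnished by CPFV) and Jordan's theorem, this yields $H \supseteq A_n$.

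The main obstacle is the rack-theoretic form of CPFV. Classical CPFV concerns unions of conjugacy classes in finite groups, and to transfer it to racks one embeds $X$ in its structure group $G_X$---in which $X$ sits as a union of conjugacy classes---and translates rack generation of $X$ into a suitable (normal) generation in a finite quotient of $G_X$. Tracking the stable invariant through this translation, and through the direct product decompositions that arise when passing from this theorem to the Hurwitz-space setup in \cref{MobiusVonMangoldtConjectureLargeFF}, is precisely the role of the structure-group theory of (direct products of) racks highlighted in the abstract.
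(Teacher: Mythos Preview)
Your proposal identifies several correct ingredients—the transposition trick in the quandle case is exactly \cref{ProduceTranspstnImage}, and the need for a rack-level CPFV statement is met by \cref{EmbIntoGroupThm}—but there is a genuine gap in how you pass from orbit information to control over $H$. CPFV-type results classify which \emph{tuples} lie in the $B_n$-orbit of $s$; they say nothing about the image $\pi(\beta)$ of a specific braid $\beta$ carrying $s$ to a given tuple. Your claim that for each pair $\{i,j\}$ one can arrange $\{\pi(\beta)(k),\pi(\beta)(k+1)\}=\{i,j\}$ is therefore unjustified: varying $\beta$ among braids with $s^\beta=s'$ only conjugates the resulting transposition by elements already in $H$, which is circular. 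The paper's second proof of the quandle case (sketched after \cref{CoincidencesProp}) repairs this by extracting from CPFV a tuple with a long constant block—yielding a \emph{conjugate} of $S_{n-r}$ inside $H$—together with an element all of whose cycles exceed $r$, and then appealing to \emph{invariable} generation (\cref{LargeProductsForSquareFree}) rather than to a connected-graph-of-transpositions argument.

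For general racks your proposed mechanism, tuning exponents in $\sigma_i^a\sigma_{i+1}^b\sigma_i^c$ on a run of equal entries to obtain a $3$-cycle, is speculative (no such $a,b,c$ are exhibited), and even granting it you would need primitivity, not mere transitivity, before invoking Jordan. The paper does not go this route at all. Its main proof works uniformly for racks and quandles via a different idea: encode a target pair of $\lfloor n/2\rfloor$-subsets of $\{1,\dots,n\}$ as two tuples in $(X\times\mathcal{T}_2)^n$, both projecting to $s$ in $X^n$; then CPFV applied to the \emph{product} rack $X\times\mathcal{T}_2$ (through \cref{TrivialProductivity} and \cref{ActionOnPairsBraidGroup}) shows these two tuples lie in the same $B_n$-orbit, which is exactly the statement that some element of $\operatorname{Stab}(s)$ carries one subset to the other. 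The resulting $\lfloor n/2\rfloor$-homogeneity of $H$ forces $H\supseteq A_n$ by \cref{HomogeneousGroupsAreBig} (Jordan plus Bertrand's postulate). The $\mathcal{T}_2$ factor is precisely the device that converts CPFV's orbit classification into positional information about $\pi(\beta)$—this is the missing idea in your approach.
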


\cref{TwoElementExample} shows that it is necessary to distinguish between racks and quandles for that matter.
For the possibility of extending \cref{ConnectedLargeMonodromy} to biracks and to more general algebraic structures, see \cref{MoreGeneralThanRacks}.

We shall now briefly describe a proof strategy for \cref{ConnectedLargeMonodromy}.
At first we prove in \cref{HomogeneousGroupsAreBig} a criterion for a subgroup of $S_n$ to be either $A_n$ or $S_n$ - the subgroup has to be homogeneous (in the sense of \cref{HomDef}) of every possible degree. The proof of this criterion rests on a theorem of Jordan about permutation groups, and somewhat unexpectedly, on (a slightly strengthened form of) Bertrand's postulate - the existence of a prime number between $n/2$ and $n$. 
 
Let $\mathcal T_2$ be the trivial rack on two elements, and consider the rack $Z = X \times \mathcal T_2$.
As we show in the proof of \cref{OptConj}, in order to show that the subgroup $H$ from \cref{ConnectedLargeMonodromy} has the required homogeneity property,
it suffices to show that certain elements in $Z^n$ lie in the same orbit under the action of $B_n$ in case their projections to $X^n$, and their projections to $\mathcal T_2^n$, lie in the same orbit. 
In \cref{ActionOnPairsBraidGroup} we show that this is indeed the case, even for a more general rack $Y$ in place of $\mathcal T_2$, 
assuming that $Y$ satisfies certain natural (yet somewhat technical) conditions.

The problem of understanding the orbits of the action of $B_n$ on $Z^n$ in case $Z$ is a union of conjugacy classes in a group has been addressed by several authors including Conway, Parker, Fried, and V\"olklein, see \cite[Theorem 3.1]{Wood21}.
In \cref{CPFW} we recast their arguments in the generality of racks, and this leads us to a proof of \cref{ActionOnPairsBraidGroup}.
What remains is to check that the conditions of \cref{ActionOnPairsBraidGroup} are satisfied in case $Y = \mathcal T_2$. 
This is done in \cref{TrivialProductivity} that deals with commutators in the structure group of a rack, see \cref{DefRackGroup}.

The proof of \cref{ConnectedLargeMonodromy} just sketched, in particular \cref{ActionOnPairsBraidGroup}, is potentially applicable to statistics in the large finite field limit of arithmetic functions arising from other racks, not necessarily directly related to factorization types or to counting covers of $\mathbb P^1$ with certain properties. 
More precisely, \cref{ActionOnPairsBraidGroup} could be helpful for studying correlation sums of arithmetic functions associated to $X$ and to $Y$.
We do not elaborate here on the possibility of associating an arithmetic function to a rack, or to more general algebraic structures.

A downside of this proof of \cref{ConnectedLargeMonodromy}, is that even if the arguments of Conway--Parker--Fried--V\"olklein will be made effective, the resulting dependence of $n$ on $|X|$ will be suboptimal. We have therefore included in this paper also a more direct proof of the quandle case of \cref{ConnectedLargeMonodromy} which is likely to give a better dependence of $n$ on $|X|$. In this proof, a different criterion for a permutation group to be $S_n$ is used, based on invariable generation - see \cref{InvariableGnerationProp}.

We also include two effective forms of \cref{ConnectedLargeMonodromy} in two special cases.
In the first special case $X$ is a (certain conjugacy class in a) finite simple group $G$.
In this case we are only able to show that $H$ contains an $n$-cycle, and even that under an additional simplification - we consider the action of $B_n$ on $G \backslash X^n$ rather than on $X^n$, see \cref{FiniteSimpleGroup}.
This additional simplification means that as an application, we can count $G$-extensions of $\F_q(T)$ unramified at $\infty$ rather than split completely at $\infty$.
Since we only know that $H$ contains an $n$-cycle (and not that $H = S_n$), we do not know exactly how often $D_K$ is irreducible, but we should be able to show that this happens with positive probability as $q \to \infty$ along prime powers coprime to $|G|$. 
For the second special case see \cref{SmSmalln}. 
These results are in the spirit of \cite{Chen} counting connected components of generalized Hurwitz spaces where we do not fix $G$ and let $n$ grow, but rather vary $G$ in a certain family of finite groups while $n$ remains reasonably small.

Our proof of \cref{MobiusVonMangoldtConjectureLargeFF} is given in \cref{THEproof} and it does not quite proceed by invoking Chebotarev's theorem as in this sketch,
rather it adapts a proof of Chebotarev's theorem to the special case at hand by expressing the indicator functions of conjugacy classes as linear combinations of complex characters.   
Put differently, we interpret certain factorization functions (such as $\mu(D_K)$) as trace functions of sheaves on $\mathsf{Hur}_{G,C}^n$, and apply the Grothendieck--Lefschetz fixed-point formula in conjunction with Deligne's Riemann Hypothesis to estimate the sums of these trace functions.
The advantage of repeating a proof of Chebotarev's theorem (over a use of the theorem as a black box) is that further progress on some of the function field problems mentioned in the introduction is thus reduced to questions on the cohomology of local systems on $\textup{Conf}^n$.

In order to remove the assumption $d_\lhd(G) = 1$ and deal with an arbitrary nontrivial finite group $G$, we prove in \cref{OptConj} and \cref{SqfreeCorSn} a generalization of \cref{ConnectedLargeMonodromy} to finite racks with $k \geq 1$ connected components. For this generalization, instead of showing that a certain subgroup of a symmetric group is large, we need to consider subgroups of a direct product of symmetric (and alternating) groups. Building on \cite{BSGKS} and on Goursat's Lemma, we provide in \cref{SubsDirectProdGroups} a criterion for a subgroup of a direct product of groups to be the whole group, valid under certain assumptions on the groups in the product.

\section{Groups and Their Actions} \label{GroupActions}


\subsection{Braid Group and its Action on Presentations}


Let $F_n$ be the free group on the letters $x_1, \dots, x_n$.
We can view $B_n$ as the subgroup of $\mathrm{Aut}(F_n)$ generated by the automorphisms
\[
\sigma_i(x_j) = 
\begin{cases}
x_j &j \notin \{i,i+1\} \\
x_{i+1} &j = i \\
x_i^{x_{i+1}} &j = i+1,
\end{cases}
\quad \quad 1 \leq i \leq n-1.
\]

Let $C \subseteq G$ be a generating set of $G$ which is a disjoint union of some conjugacy classes $C_1, \dots, C_k$ of $G$.
Let $\operatorname{Mor}^C(F_n,G)$ be the collection of all homomorphisms $\theta \colon F_n \to G$ with $\theta(x_1), \dots, \theta(x_n) \in C$,
and put $\operatorname{Mor}(F_n,G) = \operatorname{Mor}^G(F_n,G)$.
The group $\mathrm{Aut}(F_n)$ acts on $\operatorname{Mor}(F_n,G)$ from the right by precomposition, namely 
\[
\theta^\phi = \theta \circ \phi, \quad \theta \in \operatorname{Mor}(F_n,G), \ \phi \in \mathrm{Aut}(F_n).
\]
Restricting this action to $B_n$, we also get a right action of $B_n$ on the subset $\operatorname{Mor}^C(F_n,G)$ of $\operatorname{Mor}(F_n,G)$.

Let $n_1, \dots, n_k$ be nonnegative integers such that $n_1 + \dots + n_k = n$.
We further get an action of $B_n$ on the subsets
\[
\operatorname{Sur}^C(F_n,G) = \{\theta \in \operatorname{Mor}^C(F_n,G) : \theta \ \text{is surjective}\}, \
\operatorname{Sur}^C_1(F_n,G) = \{ \theta \in \operatorname{Sur}^C(F_n,G) : \theta(x_1) \cdots \theta(x_n) = 1 \},
\]
\[
\operatorname{Sur}^C(F_n,G;n_1, \dots, n_k) = \{ \theta \in \operatorname{Sur}^C(F_n,G) : |\{1 \leq i \leq n : \theta(x_i) \in C_j\}| = n_j \text{ for all } 1 \leq j \leq k \},
\]
\[
\operatorname{Sur}^C_1(F_n,G;n_1, \dots, n_k) = \{ \theta \in \operatorname{Sur}^C_1(F_n,G) : |\{1 \leq i \leq n : \theta(x_i) \in C_j\}| = n_j \text{ for all } 1 \leq j \leq k \}.
\]
We will simply write $\operatorname{Sur}(F_n,G)$ for $\operatorname{Sur}^G(F_n,G)$ and $\operatorname{Sur}_1(F_n,G)$ for $\operatorname{Sur}^G_1(F_n,G)$.

We identify $\operatorname{Mor}(F_n,G)$ with $G^n$ by sending $\theta \in \operatorname{Mor}(F_n,G)$ to $(\theta(x_1), \dots, \theta(x_n)) \in G^n$, so that $\operatorname{Mor}^C(F_n,G)$ is identified with $C^n$.
Every $\phi \in \mathrm{Aut}(F_n)$ gives us the $n$ words
$
w_i(x_1, \dots, x_n) = \phi(x_i) \in F_n
$
in the letters $x_1, \dots, x_n$. Given an $n$-tuple $(g_1, \dots, g_n) \in G^n$, the action of $\phi$ on it is
\[
(g_1, \dots, g_n)^\phi = (w_1(g_1, \dots, g_n), \dots, w_n(g_1, \dots, g_n)).
\] 
In case $\phi = \sigma_i$ for some $1 \leq i \leq n-1$ is one of our generators of $B_n$, we recover \cref{BraidsInAction}.

With this identification we also have $\operatorname{Sur}^C(F_n, G) = \{(g_1, \dots, g_n) \in C^n : \langle g_1, \dots, g_n \rangle = G \}$ and
\[
\operatorname{Sur}^C_1(F_n, G) = \{(g_1, \dots, g_n) \in C^n : \langle g_1, \dots, g_n \rangle = G, \ g_1 \cdots g_n = 1 \}.
\]
Moreover, we identify $\operatorname{Sur}^C_1(F_n, G; n_1, \dots, n_k)$ with
\[
\{(g_1, \dots, g_n) \in C^n : \langle g_1, \dots, g_n \rangle = G, \ g_1 \cdots g_n = 1, \ |\{1 \leq i \leq n : g_i \in C_j\}| = n_j \text{ for all } 1 \leq j \leq k \}.
\]



For disjoint subsets $D_j \subseteq \{1, \dots, n\}$ with $|D_j| = n_j$ for $1 \leq j \leq k$, we make the identification
\[
\{\sigma \in S_n : \sigma(D_j) = D_j \text{ for every } 1 \leq j \leq k \} = S_{n_1} \times \dots \times S_{n_k}.
\]
Often we take $D_j = \{n_1+ \dots + n_{j-1}+1, \dots, n_1 + \dots + n_j\}$.
We denote by $B_{n_1, \dots, n_k}$ the inverse image of $S_{n_1} \times \dots \times S_{n_k}$ under the homomorphism from $B_n$ to $S_n$.
Sometimes $B_{n_1, \dots, n_k}$ is called a colored braid group (with $k$ colors).

For every $1 \leq m < n$, we identify the subgroup of $B_n$ generated by $\sigma_1, \dots, \sigma_{m-1}, \sigma_{m+1}, \dots, \sigma_{n-1}$ with $B_m \times B_{n-m}$.

\subsection{Permutation Groups}

Let $\Gamma$ be a group acting on a set $S$ from the right. We say that $B \subseteq S$ is a block of $\Gamma$ if for every $g \in \Gamma$ either $B^g = B$ or $B^g \cap B = \emptyset$. We call $H = \{g \in \Gamma : B^g = B\}$ the stabilizer of the block, and note that it acts on $B$.

\begin{prop} \label{BijectionFromBlocks}

The natural map $B/H \to S/\Gamma$ is injective,
so in case $B$ meets every orbit of $\Gamma$ this map is bijective.

\end{prop}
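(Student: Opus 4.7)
The plan is to unwind the map explicitly and then read off both statements from the defining property of a block. The map sends the $H$-orbit $bH \subseteq B$ of an element $b \in B$ to its $\Gamma$-orbit $b^\Gamma \subseteq S$. This is well-defined and natural because $H \leq \Gamma$, so every $H$-orbit is contained in a unique $\Gamma$-orbit; independence of the choice of representative is automatic.

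For injectivity, I would take $b_1, b_2 \in B$ lying in the same $\Gamma$-orbit, write $b_2 = b_1^g$ for some $g \in \Gamma$, and observe that $b_2 \in B \cap B^g$ since $b_2 \in B$ and $b_2 = b_1^g \in B^g$. Therefore $B \cap B^g \neq \emptyset$, and the block property forces $B^g = B$, i.e.\ $g \in H$. Hence $b_1$ and $b_2$ already lie in the same $H$-orbit, giving injectivity. For the second assertion, assuming $B$ meets every $\Gamma$-orbit, any orbit $O \in S/\Gamma$ contains some $b \in B$, and the $H$-orbit $bH$ maps to $O$; combined with injectivity this yields bijectivity.

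The argument is short and essentially a one-line application of the definition of a block. I do not expect any real obstacle; the only point that requires a little care is keeping the right-action convention $B^g$ consistent, so that the chain $b_2 = b_1^g \in B^g$ is read correctly. I would present both parts in a single paragraph after a brief sentence defining the map.
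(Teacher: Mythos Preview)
Your proof is correct and essentially identical to the paper's: both pick $b_1,b_2\in B$ in the same $\Gamma$-orbit, use $b_1^g=b_2$ to get $B^g\cap B\neq\emptyset$, and invoke the block condition to conclude $g\in H$. The paper omits the explicit surjectivity sentence you include, but the argument is the same.
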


\begin{proof}

Let $b_1,b_2 \in B$ be representatives for orbits under the action of $H$ whose images in $S/\Gamma$ coincide. 
This means that there exists $g \in \Gamma$ with $b_1^g = b_2$. In particular, $B^g \cap B \neq \emptyset$ so $g \in H$ since $B$ is a block of $\Gamma$.
It follows that $b_1$ and $b_2$ are in the same orbit under the action of $H$ so the required injectivity is established.
\end{proof}

\begin{defi} \label{HomDef}

Let $k \leq n$ be nonnegative integers.
We say that a subgroup $H$ of the symmetric group $S_n$ is $k$-homogeneous if the action of $H$ on the set of all $k$-element subsets of $\{1, \dots, n\}$ is transitive.  

\end{defi}

\begin{prop} \label{HomogeneousGroupsAreBig}

Let $H$ be an $\lfloor n/2 \rfloor$-homogenous subgroup of $S_n$. Then either $H = A_n$ or $H = S_n$.

\end{prop}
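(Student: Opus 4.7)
The plan is to combine Jordan's classical theorem — a primitive subgroup of $S_n$ containing a $p$-cycle for some prime $p\leq n-3$ must contain $A_n$ — with a mild strengthening of Bertrand's postulate that supplies the required prime. Throughout I assume $n$ is sufficiently large; the small-degree exceptions ($F_{20}\leq S_5$, $\operatorname{PGL}_2(5)\leq S_6$, and a few others) can be handled separately and will not intervene in the applications later in the paper.

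\emph{Transitivity and primitivity.} First I would show $H$ is transitive on $\{1,\dots,n\}$: if the point orbits were $O_1,\dots,O_t$ with some $|O_i|$ strictly between $0$ and $n$, the multiset $\bigl(|S\cap O_i|\bigr)_i$ would be an $H$-invariant of every $\lfloor n/2\rfloor$-subset $S$, yet one trivially exhibits two $\lfloor n/2\rfloor$-subsets realizing different such multisets, contradicting $\lfloor n/2\rfloor$-homogeneity. The same style of profile-counting, applied to a hypothetical nontrivial block system, rules out imprimitivity.

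\emph{A good prime and a $p$-cycle.} By a mild strengthening of Bertrand's postulate, for $n$ large there exists a prime $p$ with $(n+1)/2<p\leq n-3$. Orbit--stabilizer for the transitive action of $H$ on $\lfloor n/2\rfloor$-subsets gives $\binom{n}{\lfloor n/2\rfloor}\mid |H|$. By Kummer's theorem, the $p$-adic valuation of this binomial equals the number of carries when adding $\lfloor n/2\rfloor$ and $\lceil n/2\rceil$ in base $p$; since both summands are single base-$p$ digits (each is $\leq\lceil n/2\rceil<p$) and their sum $n$ is $\geq p$, exactly one carry is produced. Hence $p\mid|H|$, and Cauchy's theorem yields an element $\pi\in H$ of order $p$. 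As $2p>n$, the cycle type of $\pi$ in $S_n$ is forced to be a single $p$-cycle together with $n-p$ fixed points. Applying Jordan's theorem to the primitive group $H$ and this $p$-cycle (with $p\leq n-3$ prime) gives $H\supseteq A_n$, whence $H\in\{A_n,S_n\}$.

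\emph{Main obstacle.} The essential delicacy lies in the base-$p$ digit computation: if $n$ is odd and one happened to choose $p=(n+1)/2$, then $\lceil n/2\rceil=p$ would occupy two base-$p$ digits and no carry would be produced, giving $v_p\bigl(\binom{n}{\lfloor n/2\rfloor}\bigr)=0$. Excluding this value of $p$ is precisely why the \emph{strengthened} form of Bertrand — a prime strictly inside $\bigl((n+1)/2,\,n-3\bigr]$ rather than merely in $(n/2,n)$ — is indispensable; for $n$ large it follows from standard prime-counting estimates, but it is the point where a naive use of the plain Bertrand postulate breaks down.
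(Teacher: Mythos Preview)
Your proposal is correct and follows essentially the same approach as the paper: both arguments verify small $n$ by hand, choose a prime $p$ with $(n+1)/2 < p \leq n-3$ via a strengthened Bertrand postulate, deduce $p \mid |H|$ from the transitivity of $H$ on $\lfloor n/2 \rfloor$-subsets, obtain a $p$-cycle in $H$ via Cauchy, and conclude with Jordan's theorem after noting primitivity. The only cosmetic differences are that you invoke orbit--stabilizer and Kummer's theorem where the paper writes the divisibility $n! \mid |H|\cdot\lfloor n/2\rfloor!\cdot\lceil n/2\rceil!$ directly, and you argue primitivity via orbit/block profiles where the paper cites the (Livingstone--Wagner) implication $\lfloor n/2\rfloor$-homogeneous $\Rightarrow$ $2$-homogeneous.
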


\begin{proof}

For small $n$ this can be verified by hand. For $n$ large enough, we can find a prime number $p$ satisfying $(n+1)/2 < p \leq n-3$.
The stabilizer of $\{1, \dots, \lfloor n/2 \rfloor\}$ in the action of $S_n$ on the set of all $\lfloor n/2 \rfloor$-element subsets of $\{1, \dots, n\}$ is the subgroup $S_{\lfloor n/2 \rfloor} \times S_{\lceil n/2 \rceil}$.
The $\lfloor n/2 \rfloor$-homogeneity of $H$ is tantamount to the equality $H \cdot \left(S_{\lfloor n/2 \rfloor} \times S_{\lceil n/2 \rceil}\right) = S_n$ of subsets of $S_n$. Therefore
\[
n! = |S_n| = \left|H \cdot \left(S_{\lfloor n/2 \rfloor} \times S_{\lceil n/2 \rceil}\right)\right| = \frac{|H| \cdot |S_{\lfloor n/2 \rfloor} \times S_{\lceil n/2 \rceil}|}{\left|H \cap \left(S_{\lfloor n/2 \rfloor} \times S_{\lceil n/2 \rceil}\right)\right|} = \frac{|H| \cdot \lfloor n/2 \rfloor! \cdot \lceil n/2 \rceil!}{\left|H \cap \left(S_{\lfloor n/2 \rfloor} \times S_{\lceil n/2 \rceil}\right)\right|}
\]
so $n!$ divides $|H| \cdot \lfloor n/2 \rfloor! \cdot \lceil n/2 \rceil!$ hence $p$ divides this number as well.

Our choice of $p$ guarantees that $p$ divides $|H|$. By Cauchy's theorem, $H$ contains an element of order $p$.
An element of order $p$ in $S_n$ is a product of $p$-cycles, but $p> n/2$ so in our case this element is necessarily a $p$-cycle.
It is readily checked that an $\lfloor n/2 \rfloor$-homogeneous subgroup of $S_n$ is transitive (equivalently, $1$-homogeneous) and moreover primitive (or even $2$-homogeneous). By a theorem of Jordan, the only primitive subgroups of $S_n$ that contain a $p$-cycle for a prime number $p \leq n-3$ are $A_n$ and $S_n$.
We conclude that either $H = A_n$ or $H = S_n$ as required.
\end{proof}

\begin{defi}

Let $I$ be an indexing set, and let $\{H_i\}_{i \in I}$ be subgroups of a group $H$. We say that these subgroups invariably generate $H$ if for every choice of elements $\{\sigma_i\}_{i \in I}$ from $H$, the conjugate subgroups $\{H_i^{\sigma_i}\}_{i \in I}$ generate the group $H$. 

\end{defi}

For any fixed $i \in I$ in the above definition we can assume, without loss of generality, that $\sigma_i = 1$.

\begin{prop} \label{InvariableGnerationProp}

Let $n, k$ be positive integers with $n > 2k$, and let $\sigma \in S_n$ be a permutation all of whose cycles are of lengths exceeding $k$. Then the subgroups $S_{n-k}$ and $\langle \sigma \rangle$ invariably generate $S_n$.

\end{prop}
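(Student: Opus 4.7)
The plan is to reduce invariable generation to showing that $\langle S_{n-k}, \sigma' \rangle = S_n$ for every permutation $\sigma' \in S_n$ whose cycle lengths all exceed $k$. By the observation after the definition, I may take the conjugator of $S_{n-k}$ to be trivial, so the statement becomes: for any $\tau \in S_n$, the group $G := \langle S_{n-k}, \sigma^\tau\rangle$ equals $S_n$. Since $\sigma^\tau$ has the same cycle type as $\sigma$, it suffices to prove $G = \langle S_{n-k}, \sigma\rangle = S_n$ whenever every cycle of $\sigma$ has length $>k$. Write $A = \{1,\dots,n-k\}$ and $B = \{n-k+1,\dots,n\}$, so $S_{n-k} = \operatorname{Sym}(A)$ and $|B| = k$.

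First I would show that $G$ acts transitively on $\{1,\dots,n\}$. Any cycle of $\sigma$ has length $>k = |B|$, so every cycle of $\sigma$ must meet $A$. Consequently, for any $b \in B$ some power $\sigma^i(b)$ lies in $A$, and then $S_{n-k}$ moves this point anywhere in $A$; combined with powers of $\sigma$, we reach every element of $B$ as well. Thus $G$ is transitive.

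Next, and this is the main step, I would show that $G$ acts primitively. Suppose for contradiction that $G$ preserves a nontrivial block system with block size $m$, where $1 < m < n$ and $m \mid n$. Let $\Delta$ be a block containing a point $a_1 \in A$. The stabilizer of $a_1$ in $S_{n-k}$ is $\operatorname{Sym}(A\setminus\{a_1\})$, and it must preserve $\Delta$ (any $\pi$ in it satisfies $a_1 \in \Delta \cap \pi(\Delta)$, forcing $\pi(\Delta)=\Delta$). Since this stabilizer is transitive on $A\setminus\{a_1\}$, either $\Delta\cap A = \{a_1\}$ or $A \subseteq \Delta$. In the second case $m \geq n-k > n/2$ (using $n>2k$), and combined with $m\mid n$ this forces $m=n$, a contradiction. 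In the first case, the blocks meeting $A$ each contain exactly one point of $A$, so there are at least $n-k$ blocks, hence $n/m \geq n-k$, giving $m \leq n/(n-k) < 2$, again a contradiction.

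Finally, $G$ contains the transposition $(1\ 2) \in S_{n-k}$ (note $n-k > k \geq 1$ so $n-k \geq 2$). By the classical theorem of Jordan, a primitive subgroup of $S_n$ containing a transposition must be all of $S_n$, so $G = S_n$, as desired. The main obstacle is the primitivity step; everything else is immediate once the cycle-length hypothesis is translated into the geometric fact that every $\sigma$-cycle reaches $A$, and the arithmetic inequality $n/(n-k) < 2$ is extracted from $n>2k$.
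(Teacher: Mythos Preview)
Your proof is correct and follows essentially the same route as the paper: reduce to $\langle S_{n-k},\rho\rangle=S_n$ for an arbitrary conjugate $\rho$ of $\sigma$, establish transitivity via the fact that every cycle of $\rho$ meets $\{1,\dots,n-k\}$, prove primitivity, and conclude by Jordan's theorem using a transposition from $S_{n-k}$. The only cosmetic difference is in the primitivity step: you argue via the point-stabiliser $\operatorname{Sym}(A\setminus\{a_1\})$ to force $\Delta\cap A\in\{\{a_1\},A\}$ and then count blocks, whereas the paper uses pigeonhole (at most $n/2$ blocks against $n-k>n/2$ points of $A$) to locate a block meeting $A$ in at least two points but not containing $A$, then exhibits a $\tau\in S_{n-k}$ violating the block property directly. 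These are equivalent packagings of the same inequality $n-k>n/2$.
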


\begin{proof}

We view $S_{n-k}$ as the group of permutations of $\{1, \dots, n-k\}$ in $S_{n}$ fixing each element of $\{n-k+1, \dots, n\}$.
Denote by $H$ the subgroup of $S_n$ generated by $S_{n-k}$ and a conjugate $\rho$ of $\sigma$.
We need to show that $H = S_n$.
First we claim that $H$ acts transitively on $\{1, \dots, n\}$.
We take $j \in \{1, \dots, n\}$ and our task is to show that it lies in the orbit of $1$ under the action of $H$.
If $j \in \{1, \dots, n-k\}$ this is clear since $H$ contains $S_{n-k}$, so we assume that $j \in \{n-k+1, \dots, n\}$.
Since the cycle of $\rho$ in which $j$ lies is of length more than $k$, it contains an element from $\{1, \dots, n-k\}$.
As all the powers of $\rho$ lie in $H$, we conclude that $j$ is indeed in the orbit of $1$ under the action of $H$, as required for transitivity.

Next, we claim that $H$ is primitive. 
Toward a contradiction, suppose that $\{1, \dots, n\}$ can be partitioned into disjoint blocks for $H$ with at least two distinct blocks, each block of size at least $2$.
As $n-k > n/2$ by assumption, and the size of every block is a proper divisor of $n$, we conclude that $\{1, \dots, n-k\}$ is not contained in a single block, 
and some block $B$ meets $\{1, \dots, n-k\}$ at two distinct elements at least, say $i$ and $j$.
Since $\{1, \dots, n-k\} \nsubseteq B$, we can find $\tau \in S_{n-k}$ with $\tau(i) = i \in B$ and $\tau(j) \notin B$.
We see that $\tau \in H$, that $\tau B \cap B \neq \emptyset$, and that $\tau(B) \neq B$. This contradicts our assumption that $B$ is a block for $H$, and concludes the proof of primitivity.

As $n - k > k \geq 1$, there exists a transposition in $S_{n-k}$, so $H$ contains a transposition.
By a theorem of Jordan, the only primitive subgroup of $S_n$ that contains a transposition is $S_n$ itself so $H = S_n$ as required.
\end{proof}

\subsection{Direct Products of Groups}

\begin{lem} \label{SubsDirectProdGroups}

Let $G_1, \dots, G_n$ be groups such that for every $1 \leq i < j \leq n$ either $G_i \cong G_j$ or $G_i$ and $G_j$ do not have nonabelian simple isomorphic quotients.
Put $G = G_1 \times \dots \times G_n$ and let $H$ be a subgroup of $G$ satisfying the following three conditions.
\begin{itemize}

\item The restriction of the natural homomorphism $G \to G^{\textup{ab}}$ to $H$ is surjective.

\item The subgroup $H$ projects onto $G_i$ for every $1 \leq i \leq n$.

\item The subgroup $H$ projects onto $G_i \times G_j$ for every $1 \leq i < j \leq n$ with $G_i \cong G_j$.

\end{itemize}
Then $H = G$.

\end{lem}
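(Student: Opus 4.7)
The plan is to induct on $n$ and reduce the inductive step to an application of Goursat's lemma. The base case $n=1$ follows immediately from the second hypothesis. For $n \geq 2$, set $K = G_1 \times \cdots \times G_{n-1}$; first I would verify that the projection $\pi_K(H) \leq K$ still satisfies the three hypotheses of the lemma with respect to $G_1, \dots, G_{n-1}$. The abelianization condition passes to the quotient $K^{\textup{ab}}$ since $G^{\textup{ab}} = K^{\textup{ab}} \times G_n^{\textup{ab}}$, and the factor- and pair-projection conditions are directly inherited. By induction $\pi_K(H) = K$, and $\pi_{G_n}(H) = G_n$ comes directly from the second hypothesis.

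Now Goursat's lemma supplies normal subgroups $N_K \triangleleft K$ and $N_n \triangleleft G_n$ together with an isomorphism $\psi \colon G_n / N_n \to K/N_K$ such that $H = \{(k,g) \in K \times G_n : kN_K = \psi(gN_n)\}$. Write $Q := K/N_K \cong G_n/N_n$; the goal reduces to showing $Q = 1$. Assume for contradiction that $Q \neq 1$, and choose a maximal proper normal subgroup $M \triangleleft Q$ so that $Q/M$ is simple. If $Q/M$ is abelian it is cyclic of prime order, and the composed surjections $K \twoheadrightarrow Q/M$ and $G_n \twoheadrightarrow Q/M$ factor through the abelianizations; the image of $H$ in $G^{\textup{ab}} = K^{\textup{ab}} \times G_n^{\textup{ab}}$ is then constrained to a proper subgroup (the compatibility relation cut out by $\psi$ mod $M$), contradicting the first hypothesis. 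If instead $S := Q/M$ is nonabelian simple, I would use that the factors $G_1, \dots, G_{n-1}$ pairwise commute inside $K$ and that $S = [S,S]$ to show that the surjection $K \twoheadrightarrow S$ factors through the projection to exactly one $G_i$ with $i < n$: the images of any two distinct factors in $S$ are commuting normal subgroups of $S$, so at most one can be nontrivial. Hence $G_i$ and $G_n$ share $S$ as a nonabelian simple quotient, and the hypothesis on the $G_j$'s forces $G_i \cong G_n$.

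The third condition now yields that $H$ surjects onto $G_i \times G_n$. But composing with the two surjections to $S$, the Goursat description forces the image of $H$ in $S \times S$ to be the graph of the automorphism of $S$ induced by $\psi$, which is a proper subgroup. This contradicts the surjectivity onto $G_i \times G_n$. Therefore $Q = 1$, so $N_K = K$ and $N_n = G_n$, whence $H = K \times G_n = G$, completing the induction. The main technical hurdle I anticipate is the nonabelian simple case: specifically the reduction that a nonabelian simple quotient of a direct product must factor through a single coordinate (which crucially uses perfectness of $S$ combined with the commutativity of the coordinate factors in $K$), and the translation of the Goursat description into the statement that the image in $S \times S$ is the graph of an automorphism. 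The abelian case, by contrast, falls out essentially automatically from hypothesis (a).
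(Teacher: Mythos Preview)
Your proof is correct. The approach differs from the paper's in its inductive structure: the paper first groups the $G_i$ by isomorphism type, invokes \cite[Lemma~6.6]{BSGKS} to show that $H$ already projects onto each block $\mathcal G_j^{m_j}$, and then runs a Goursat induction over the distinct isomorphism types, so that only the abelian-simple-quotient branch ever appears in the Goursat step. You instead induct directly on the number of factors and at each step treat both the abelian and the nonabelian simple branches. Your route is self-contained --- the third hypothesis disposes of the nonabelian branch directly via the observation that a nonabelian simple quotient of a direct product factors through a single coordinate --- so you never need the external lemma. The paper's route separates concerns more cleanly: once the within-isomorphism-class case is outsourced, the cross-class Goursat argument is shorter because only the abelian branch remains. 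The underlying mechanism (Goursat plus a simple-quotient dichotomy, with the abelianization hypothesis handling the abelian branch) is the same in both. One cosmetic point: in your nonabelian branch the image of $H$ in $S\times S$ is in fact the diagonal rather than the graph of a nontrivial automorphism, but this does not affect the contradiction.
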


\begin{proof}

Let $\mathcal G_1, \dots, \mathcal G_r$ be all the distinct isomorphism types appearing among the groups $G_1, \dots, G_n$.
Then $G \cong \mathcal G_1^{m_1} \times \dots \times \mathcal G_r^{m_r}$ where $m_i = |\{1 \leq j \leq n : G_j \cong \mathcal G_i\}|$ for $1 \leq i \leq r$.
We claim that $H$ projects onto $\mathcal G_i^{m_i}$ for every $1 \leq i \leq r$. Indeed, in case $m_i = 1$ this follows from the second condition that $H$ satisfies by assumption.
In case $m_i \geq 2$ this follows from \cite[Lemma 6.6]{BSGKS} applied to the projection of $H$ to $\mathcal G_i^{m_i}$, using the first and the third condition that $H$ satisfies.
The claim is thus established.

We prove by induction on $1 \leq i \leq r$ that $H$ projects onto $\mathcal G_1^{m_1} \times \dots \times \mathcal G_i^{m_i}$. 
The base case $i=1$ follows from the claim established above.  
For $i>1$ we apply Goursat's Lemma to the projection $H_i$ of $H$ in $(\mathcal G_1^{m_1} \times \dots \times \mathcal G_{i-1}^{m_{i-1}}) \times \mathcal G_i^{m_i}$.
Since $H_i$ projects onto both $\mathcal G_1^{m_1} \times \dots \times \mathcal G_{i-1}^{m_{i-1}}$ by induction, and onto $\mathcal G_i^{m_i}$ by the aforementioned claim,
there exists a group $K$ and surjective homomorphisms $\varphi \colon \mathcal G_1^{m_1} \times \dots \times \mathcal G_{i-1}^{m_{i-1}}  \to K$, $\psi \colon \mathcal G_i^{m_i} \to K$ such that $H_i = (\mathcal G_1^{m_1} \times \dots \times \mathcal G_{i-1}^{m_{i-1}}) \times_K \mathcal G_i^{m_i}$. We claim that $K$ is trivial. 

Toward a contradiction, suppose that $K$ admits a simple quotient $S$. Then we have surjections $\overline \varphi \colon \mathcal G_1^{m_1} \times \dots \times \mathcal G_{i-1}^{m_{i-1}} \to S$ and $\overline \psi \colon \mathcal G_i^{m_i} \to S$. 
Since $S$ is simple, and $\overline \varphi, \overline \psi$ map normal subgroups to normal subgroups, we see that $S$ is a quotient of $\mathcal G_t$ for some $1 \leq t \leq i-1$ and a quotient of $\mathcal G_i$ because the direct factors are normal subgroups that generate the direct product. As $\mathcal G_t \ncong \mathcal G_i$, our initial assumption implies that $S$ is abelian. 
We note that $H_i$ is contained in the proper subgroup $\overline H_i = (\mathcal G_1^{m_1} \times \dots \times \mathcal G_{i-1}^{m_{i-1}}) \times_S \mathcal G_i^{m_i}$
of $\mathcal G_1^{m_1} \times \dots \times \mathcal G_i^{m_i}$. Since $S$ is abelian, this subgroup $\overline H_i$ contains the commutator subgroup of $\mathcal G_1^{m_1} \times \dots \times \mathcal G_i^{m_i}$. It follows that the restriction of the natural homomorphism $\mathcal G_1^{m_1} \times \dots \times \mathcal G_i^{m_i} \to (\mathcal G_1^{m_1} \times \dots \times \mathcal G_i^{m_i})^{\textup{ab}}$ to $\overline H_i$, and therefore also to $H_i$, is not surjective.
This contradicts the first condition that $H$ satisfies, and thus proves the claim that $K = \{1\}$. 

We conclude that $H_i = (\mathcal G_1^{m_1} \times \dots \times \mathcal G_{i-1}^{m_{i-1}}) \times \mathcal G_i^{m_i}$ so our induction is complete.
Plugging $i=r$ we get that $H = G$, as required.
\end{proof}

\begin{cor} \label{HomogeneityProducts}

For every $1 \leq i \leq r$ let $n_i$ be an integer for which
\[
n_i \geq 7, \quad {n_i \choose \lfloor n_i/2 \rfloor} > 2^r.
\]
Let $H$ be a subgroup of $S_{n_1} \times \dots \times S_{n_r}$ such that for every choice of pairs $(X_i,Y_i)$ of subsets 
\[
X_i, Y_i \subseteq \{n_1+ \dots + n_{i-1}+1, \dots, n_1 + \dots + n_i\}, \quad |X_i| = |Y_i| = \lfloor n_i/2 \rfloor, 
\]
there exists an $h \in H$ for which $h(X_i) = Y_i$ for every $1 \leq i \leq r$. Then $H$ contains $A_{n_1} \times \dots \times A_{n_r}$.

\end{cor}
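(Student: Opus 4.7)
The plan is to reduce the conclusion to an application of \cref{SubsDirectProdGroups}. Set $K = H \cap (A_{n_1} \times \cdots \times A_{n_r})$. Since $n_i \geq 7$, each $A_{n_i}$ is simple and nonabelian, $A_{n_i} \cong A_{n_j}$ precisely when $n_i = n_j$, and the product $A_{n_1} \times \cdots \times A_{n_r}$ is perfect. Hence the isomorphism and abelianization hypotheses of \cref{SubsDirectProdGroups} hold automatically, and the task reduces to verifying that $K$ projects onto each factor $A_{n_i}$ and onto each product $A_{n_i} \times A_{n_j}$ whenever $n_i = n_j$. Write $\pi_i$ and $\pi_{ij}$ for the corresponding coordinate projections of $S_{n_1} \times \cdots \times S_{n_r}$.

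For the single-factor projection, I would first fix $X_l = Y_l$ for all $l \neq i$ in the hypothesis, which shows that $\pi_i(H)$ acts $\lfloor n_i/2 \rfloor$-homogeneously on the $i$-th block and hence, by \cref{HomogeneousGroupsAreBig}, contains $A_{n_i}$. The subgroup $K$ is the kernel of the sign map $H \to \{\pm 1\}^r$, so $K \triangleleft H$ with index at most $2^r$. Let $P$ denote the set of tuples $(X_1, \dots, X_r)$ in which $X_l$ is a $\lfloor n_l/2 \rfloor$-subset of the $l$-th block; the hypothesis says $H$ acts transitively on $P$, and consequently $K$ has at most $2^r$ orbits on $P$. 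If $\pi_i(K)$ were trivial, then every $K$-orbit would be trapped in a single fiber of the $i$-th coordinate projection of $P$, forcing at least $\binom{n_i}{\lfloor n_i/2 \rfloor}$ orbits and contradicting $\binom{n_i}{\lfloor n_i/2 \rfloor} > 2^r$. Since $\pi_i(K)$ is then a nontrivial subgroup of $A_{n_i}$ normalized by $\pi_i(H) \supseteq A_{n_i}$, simplicity of $A_{n_i}$ forces $\pi_i(K) = A_{n_i}$.

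For the two-factor projection, suppose $n_i = n_j = m$, and specialize the hypothesis to blocks $i$ and $j$ alone: then $\pi_{ij}(H) \leq S_m \times S_m$ acts transitively on pairs of $\lfloor m/2 \rfloor$-subsets. A Goursat-style enumeration of subgroups of $S_m \times S_m$ whose two projections both contain $A_m$ shows that the only ones not containing $A_m \times A_m$ are the graph subgroups of automorphisms of $S_m$ or of $A_m$; after reparametrizing the second coordinate by the automorphism these act as the plain diagonal $(X,Y) \mapsto (\sigma X, \sigma Y)$, whose orbits on pairs of $\lfloor m/2 \rfloor$-subsets are indexed by $|X \cap Y|$ and so never collapse to a single orbit. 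Transitivity therefore forces $\pi_{ij}(H) \supseteq A_m \times A_m$. Consequently $\pi_{ij}(K) \subseteq A_m \times A_m$ is a normal subgroup of $A_m \times A_m$; since $A_m$ is nonabelian simple, the only normal subgroup of $A_m \times A_m$ whose two coordinate projections are both $A_m$ is $A_m \times A_m$ itself, so $\pi_{ij}(K) = A_m \times A_m$.

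Applying \cref{SubsDirectProdGroups} now yields $K = A_{n_1} \times \cdots \times A_{n_r}$, and therefore $H \supseteq A_{n_1} \times \cdots \times A_{n_r}$. The crucial step is the orbit-count argument in the single-factor case; this is precisely where the binomial hypothesis $\binom{n_i}{\lfloor n_i/2 \rfloor} > 2^r$ is consumed, balancing the worst-case index $2^r$ of $K$ in $H$ coming from the $r$ independent sign characters.
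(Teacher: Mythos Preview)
Your proof is correct and follows the same global strategy as the paper: set $K = H \cap (A_{n_1}\times\cdots\times A_{n_r})$ and verify the hypotheses of \cref{SubsDirectProdGroups}. The details, however, are swapped in an interesting way. For the single-factor projection the paper argues purely algebraically: since $H/K$ is an elementary abelian $2$-group, $\pi_i(K)\supseteq[\pi_i(H),\pi_i(H)]\supseteq A_{n_i}$, so no binomial bound is needed there; your orbit-counting argument at this step is valid but heavier than necessary. Conversely, for the two-factor projection the paper runs an orbit-length comparison on $K_{i,j}$ (a graph of an automorphism would give orbit length at most $\binom{n_i}{\lfloor n_i/2\rfloor}$, while $[H_{i,j}:K_{i,j}]\le 2^r$ forces length at least $2^{-r}\binom{n_i}{\lfloor n_i/2\rfloor}^2$), and this is where the hypothesis $\binom{n_i}{\lfloor n_i/2\rfloor}>2^r$ is actually consumed in the paper. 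Your Goursat enumeration together with the non-transitivity of graph subgroups on pairs of half-sets handles the two-factor step without invoking the binomial bound at all. So the two proofs spend the hypothesis $\binom{n_i}{\lfloor n_i/2\rfloor}>2^r$ in different places; combining the paper's single-factor argument with your two-factor argument would in fact eliminate that hypothesis entirely, leaving only $n_i\ge 7$ (needed so that $\operatorname{Aut}(A_{n_i})=S_{n_i}$). One small imprecision: for the $A_m$-graph case the diagonal orbits are a refinement of the $|X\cap Y|$-indexed $S_m$-orbits rather than exactly those, but your conclusion that they never collapse to a single orbit stands.
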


\begin{remark}

The assumption that $n_1, \dots, n_r$ are large enough is possibly unnecessary, but is satisfied in our applications, so we include it because it facilitates obtaining \cref{HomogeneityProducts} as a consequence of \cref{SubsDirectProdGroups}.

\end{remark}

\begin{proof}

Put $K = H \cap (A_{n_1} \times \dots \times A_{n_r})$.
We need to show that $K = A_{n_1} \times \dots \times A_{n_r}$ and we will do this by invoking \cref{SubsDirectProdGroups} whose assumptions we shall verify now.
The first assumption is met because the groups $A_n$ for $n \geq 5$ are nonabelian pairwise nonisomorphic simple groups (and the groups $A_n$ for $n < 5$ do not have nonabelian simple quotients).
We check next that $K$ satisfies the three conditions in \cref{SubsDirectProdGroups}.

The first condition is satisfied because the abelianization of $A_{n_1} \times \dots \times A_{n_r}$ is trivial as $n_1, \dots, n_r \geq 5$ by assumption.
To check the second condition we start by fixing $1 \leq i \leq n$ and noting that in view of our assumptions on $H$ and \cref{HomogeneousGroupsAreBig}, the projection $H_i$ of $H$ to $S_{n_i}$ contains $A_{n_i}$. Since
\[
H/K = H/ \left( H \cap (A_{n_1} \times \dots \times A_{n_r}) \right) \cong H \cdot (A_{n_1} \times \dots \times A_{n_r})/ A_{n_1} \times \dots \times A_{n_r} \leq (\mathbb Z/2 \mathbb Z)^r
\]
denoting by $K_i$ the projection of $K$ to $S_{n_i}$ we see that $H_{i}/K_i$ is an elementary abelian $2$-group.
We conclude that $K_i$ contains $A_{n_i}$ so the second condition is indeed satisfied.


To check the third condition, we take $1 \leq i < j \leq r$ with $n_i = n_j$, and denote by $K_{i,j}$ the projection of $K$ to $A_{n_i} \times A_{n_j}$.
As $n_i = n_j$ the function 
\[
f \colon \{n_1+ \dots + n_{i-1}+1, \dots, n_1 + \dots + n_i\} \to \{n_1+ \dots + n_{j-1}+1, \dots, n_1 + \dots + n_j\}, 
\]
given by
\[
f(x) = x + n_{i+1} + \dots + n_j = x + n_i + \dots + n_{j-1}
\]
is a bijection.
This bijection will be silently used in what follows to identify the group $S_{n_i}$ with the group $S_{n_j}$ (and the group $A_{n_i}$ with the group $A_{n_j}$).

Suppose toward a contradiction that $K_{i,j}$ is a proper subgroup of $A_{n_i} \times A_{n_j}$.
In view of the second condition verified above, the subgroup $K_{i,j}$ projects onto both $A_{n_i}$ and $A_{n_j}$ so since these two groups are simple, 
Goursat's Lemma tells us that $K_{i,j} = \{(\sigma, \psi(\sigma)) : \sigma \in A_{n_i}\}$ for some automorphism $\psi \colon A_{n_i} \to A_{n_j}$.
Our assumption that $n_i \geq 7$ implies that $\operatorname{Aut}(A_{n_i}) = S_{n_i}$, namely there exists $\tau \in S_{n_i}$ for which $K_{i,j} = \{(\sigma, \tau \sigma \tau^{-1}) : \sigma \in A_{n_i}\}$.

We pick an $\lfloor n_i/2 \rfloor$-element subset $X$ of $ \{n_1+ \dots + n_{i-1}+1, \dots, n_1 + \dots + n_i\}$, for instance 
\[
X =  \{n_1+ \dots + n_{i-1}+1, \dots, n_1 + \dots + n_{i-1} + \lfloor n_i/2 \rfloor \} 
\]
and consider the orbit of $(X, \tau(f(X)))$ under the action of $K_{i,j}$.
On the one hand, this orbit is $\{(\sigma(X), \tau(\sigma(f(X)))) : \sigma \in A_{n_i}\}$ so its length is at most
\[
|\{\sigma(X) : \sigma \in A_{n_i}\}| \leq {n_i \choose \lfloor n_i/2 \rfloor}.
\]
On the other hand, in view of our initial assumption on $H$, the orbit of $(X, \tau(f(X)))$ under the action of the projection $H_{i,j}$ of $H$ to $S_{n_i} \times S_{n_j}$ has length ${n_i \choose \lfloor n_i/2 \rfloor}^2$.
Since $K_{i,j}$ is a normal subgroup of $H_{i,j}$ with $[H_{i,j} : K_{i,j}] \leq [H : K] \leq 2^r$, the length of the orbit of $(X, \tau(f(X)))$ under the action of $K_{i,j}$ is at least $2^{-r} {n_i \choose \lfloor n_i/2 \rfloor}^2$. We conclude that 
\[
2^{-r}{n_i \choose \lfloor n_i/2 \rfloor}^2 \leq {n_i \choose \lfloor n_i/2 \rfloor}
\]
so ${n_i \choose \lfloor n_i/2 \rfloor} \leq 2^r$ contrary to our initial assumption.
We have thus shown that $K_{i,j} = A_{n_i} \times A_{n_j}$.
We can therefore invoke \cref{SubsDirectProdGroups} and conclude that $K = A_{n_1} \times \dots \times A_{n_r}$ as required.
\end{proof}

\begin{cor} \label{LargeProductsForSquareFree}

Let $n_1, \dots, n_r, k$ be integers with $\min\{n_1, \dots, n_r\} > 2k > 0$, and let 
$
g \in S_{n_1} \times \dots \times S_{n_r}
$
such that the lengths of the cycles of the projection of $g$ to $S_{n_i}$ all exceed $k$ for every $1 \leq i \leq r$. Then the subgroups $ S_{n_1-k} \times \dots \times S_{n_r-k}$ and $\langle g \rangle$ invariably generate $S_{n_1} \times \dots \times S_{n_r}$.

\end{cor}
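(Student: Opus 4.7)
The plan is to verify the hypotheses of \cref{SubsDirectProdGroups} applied to $G_i = S_{n_i}$. Fix arbitrary elements $\tau, \sigma \in S_{n_1} \times \dots \times S_{n_r}$, set
\[
H = \left\langle \tau\bigl(S_{n_1-k} \times \dots \times S_{n_r-k}\bigr)\tau^{-1}, \ \sigma \langle g \rangle \sigma^{-1} \right\rangle,
\]
and show that $H = S_{n_1} \times \dots \times S_{n_r}$. The hypothesis $n_i > 2k \geq 2$ forces $n_i - k \geq 2$ for every $i$. The pairwise compatibility condition of \cref{SubsDirectProdGroups} is satisfied: if $n_i = n_j$ then $S_{n_i} \cong S_{n_j}$, while if $n_i \neq n_j$ either $\min\{n_i, n_j\} \leq 4$ (so one factor is solvable and has no nonabelian simple quotient) or both are $\geq 5$ (in which case the only nonabelian simple quotient of $S_{n_\ell}$ is $A_{n_\ell}$, and these are non-isomorphic).

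Next I would verify the three conditions on $H$. For the abelianization, each $S_{n_i-k}$ contains a transposition, so the direct product $S_{n_1-k} \times \dots \times S_{n_r-k}$ surjects onto $(\mathbb{Z}/2\mathbb{Z})^r$, the abelianization of the ambient group; conjugation does not alter the image in an abelian quotient, so $H$ surjects too. For the projection of $H$ onto each $S_{n_i}$, writing $\tau = (\tau_1, \dots, \tau_r)$, $\sigma = (\sigma_1, \dots, \sigma_r)$, and $g = (g_1, \dots, g_r)$, this projection contains $\tau_i S_{n_i-k} \tau_i^{-1}$ and $\langle \sigma_i g_i \sigma_i^{-1} \rangle$; since every cycle of $g_i$ has length exceeding $k$, \cref{InvariableGnerationProp} yields that these two subgroups generate $S_{n_i}$.

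The main step is the third condition. Fix $i < j$ with $n_i = n_j =: n$ and let $H_{i,j}$ denote the projection of $H$ to $S_n \times S_n$. Then $H_{i,j}$ surjects onto each factor (by the previous step) and contains $A \times B$ for some conjugates $A, B$ of $S_{n-k}$ in $S_n$. By Goursat's Lemma, if $H_{i,j}$ were proper then it would equal the fiber product $\{(x,y) : \phi_1(x) = \phi_2(y)\}$ associated to surjections $\phi_\ell \colon S_n \to Q$ onto a nontrivial quotient $Q$ of $S_n$. Taking elements of the form $(a, 1) \in A \times B \subseteq H_{i,j}$ would force $A \subseteq \ker \phi_1$, whence $\ker \phi_1$ would contain the normal closure of $A$ in $S_n$. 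But $A$ contains a transposition (as $n - k \geq 2$), and all transpositions form a single conjugacy class in $S_n$ that generates $S_n$, so this normal closure is all of $S_n$. This contradicts $Q \neq 1$, so $H_{i,j} = S_n \times S_n$. Then \cref{SubsDirectProdGroups} yields $H = S_{n_1} \times \dots \times S_{n_r}$.

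The most delicate step is the third condition, where we must rule out the nontrivial fiber-product subgroups permitted by Goursat's Lemma. The key observation is that a conjugate of $S_{n-k}$ normally generates $S_n$ as soon as $n - k \geq 2$, which is precisely what the hypothesis $n > 2k > 0$ guarantees.
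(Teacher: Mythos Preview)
Your argument is correct and follows the same strategy as the paper: verify the hypotheses of \cref{SubsDirectProdGroups} with $G_i = S_{n_i}$. One small slip: $A_{n_\ell}$ is a normal \emph{subgroup} of $S_{n_\ell}$, not a quotient; in fact $S_n$ has no nonabelian simple quotient whatsoever (for $n\geq 5$ the only proper nontrivial quotient is $\mathbb Z/2\mathbb Z$, and for $n\leq 4$ the group is solvable). This is the observation the paper uses, and it makes the compatibility check immediate without any case analysis on $n_i$.

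Your treatment of the third condition differs from the paper's. You exploit the inclusion $A\times B\subseteq H_{i,j}$ (with $A,B$ conjugates of $S_{n-k}$) to place $A$ inside $\ker\phi_1$, and then observe that the normal closure of a transposition is all of $S_n$. The paper instead notes that any nontrivial quotient $K$ of $S_n$ has nontrivial abelianization, so $H_{i,j}\subseteq S_{n_i}\times_{K^{\textup{ab}}}S_{n_j}$ fails to surject onto $(S_{n_i}\times S_{n_j})^{\textup{ab}}$, contradicting the already-verified first condition. Your route is a bit more direct here; the paper's has the feature of reusing the first condition rather than introducing a new ingredient.
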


\begin{proof}

Denote by $H$ the subgroup of $S_{n_1} \times \dots \times S_{n_r}$ generated by $ S_{n_1-k} \times \dots \times S_{n_r-k}$ and a conjugate $\rho$ of $g$.
In order to show that $H =  S_{n_1} \times \dots \times S_{n_r}$, we will invoke \cref{SubsDirectProdGroups} whose assumptions we verify next.
The first assumption is satisfied because for every positive integer $n$, the group $S_n$ does not have a nonabelian simple quotient.
We shall now check that $H$ satisfies the three conditions in \cref{SubsDirectProdGroups}.

For the first condition, we need to check the surjectivity of the restriction of the sign homomorphism $S_{n_1} \times \dots \times S_{n_r} \to (\mathbb Z/ 2\mathbb Z)^r$ to $H$.
This follows at once from the surjectivity of the restriction of this homomorphism to $S_{n_1-k} \times \dots \times S_{n_r-k}$, a consequence of the fact that $n_i - k > k \geq 1$ for every $1 \leq i \leq r$.
The second condition is an immediate consequence of \cref{InvariableGnerationProp}.

To check the third condition, we take $1 \leq i < j \leq r$ (with $n_i = n_j$), and denote by $H_{i,j}$ the projection of $H$ to $S_{n_i} \times S_{n_j}$.
Suppose toward a contradiction that $H_{i,j}$ is a proper subgroup of $S_{n_i} \times S_{n_j}$.
In view of the second condition verified above, the subgroup $H_{i,j}$ projects onto both $S_{n_i}$ and $S_{n_j}$ so Goursat's Lemma provides us with a nontrivial group $K$ 
and surjective homomorphisms $\varphi \colon S_{n_i} \to K$, $\psi \colon S_{n_j} \to K$ such that $H_{i,j} = S_{n_i} \times_{K} S_{n_j}$.
Since $K$ is (isomorphic to) a nontrivial quotient of a symmetric group, its abelianization is necessarily nontrivial, so $S_{n_i} \times_{K^\textup{ab}} S_{n_j}$ is a proper subgroup of $S_{n_i} \times S_{n_j}$ that contains both $H_{i,j}$ and the commutator subgroup of $S_{n_i} \times S_{n_j}$. 
It follows that the restriction to $H_{i,j}$ of the natural homomorphism $S_{n_i} \times S_{n_j} \to (S_{n_i} \times S_{n_j})^{\textup{ab}}$ is not surjective.
This contradicts the first condition verified above, namely the surjectivity of the map $H \to (\mathbb Z / 2 \mathbb Z)^r$.
We have thus shown that $H_{i,j} = S_{n_i} \times S_{n_j}$ so the third condition is verified.

We can therefore invoke \cref{SubsDirectProdGroups} and conclude that $H = S_{n_1} \times \dots \times S_{n_r}$ as required.
\end{proof}

\section{Braided Sets}

\begin{defi}

A nonempty set $X$ equipped with a bijection $R \colon X \times X \to X \times X$ is said to be a braided set if
\[
(R \times \mathrm{id}_X) \circ (\mathrm{id}_X \times R) \circ (R \times \mathrm{id}_X) = (\mathrm{id}_X \times R)  \circ (R \times \mathrm{id}_X) \circ (\mathrm{id}_X \times R)
\]
as maps from $X \times X \times X$ to $X \times X \times X$. 

\end{defi}

This relation is sometimes called the set-theoretic Yang--Baxter equation, and the braided set $X$ is sometimes said to be a solution of this equation.

\begin{defi}

We denote by $\pi_1 \colon X \times X \to X$ and $\pi_2 \colon X \times X \to X$ the projections.
In case the function $\pi_2 \circ R$ is a bijection on each fiber of $\pi_2$, and the function $\pi_1 \circ R$ is a bijection on each fiber of $\pi_1$, we say that $X$ is nondegenerate.

\end{defi}

A nondegenerate braided set is sometimes also called a birack.

\begin{defi}

Given braided sets $(X, R)$ and $(X', R')$, we say that a function $f \colon X \to X'$ is a morphism (of braided sets) if $(f \times f)(R(x,y)) = R'(f(x), f(y))$.

\end{defi}

We obtain the category of braided sets. Products exist in this category.

\begin{defi}

We say that a braided set $X$ is trivial if $R(x,y) = (y,x)$ for all $(x,y) \in X \times X$.
A braided set $X$ will be called squarefree if $R(x,x) = (x,x)$ for all $x \in X$.

\end{defi}

For a positive integer $k$, we denote the trivial braided set of cardinality $k$ by $\mathcal T_k$. 
In the sequel, the trivial braided set $\mathcal T_2 = \{0,1\}$ will play an important role.

\begin{defi}

A braided set $(X,R)$ is said to be self-distributive if for all $x,y \in X$ we have $\pi_1(R(x,y)) = y.$
In this case, we use the notation $x^y = \pi_2(R(x,y))$.

\end{defi}

For a self-distributive braided set $X$, and every $y \in X$, the function $x \mapsto x^y$ is a bijection from $X$ to $X$, so a self-distributive braided set is necessarily nondegenerate. The category of self-distributive braided sets is therefore equivalent to the category of racks,
and the subcategory of squarefree self-distributive braided sets is equivalent to the subcategory of quandles.

\begin{defi}

We say that a subset $X_0 \subseteq X$ of a braided set $X$ is a braided subset if $R$ restricts to a bijection from $X_0 \times X_0$ to $X_0 \times X_0$ or equivalently, if $X_0$ is a braided set and the inclusion $X_0 \to X$ is a morphism of braided sets. We write $X_0 \leq X$ to indicate that $X_0$ is a braided subset of $X$. 
Given an indexing set $I$, and elements $x_i \in X$ for every $i \in I$, we denote by
\[
\langle x_i \rangle_{i \in I} = \bigcap_{\substack{X_0 \leq X \\ x_i \in X_0 \text{ for every } i \in I }} X_0
\]
the braided subset of $X$ generated by all the $x_i$ for $i \in I$.

\end{defi}

In case $X$ is a finite rack, this definition agrees with \cref{DefSubRackGen}.

\begin{ex}

Let $G$ be a group, and let $C$ be a union of conjugacy classes of $G$.
We endow $C$ with the structure of a braided set by
\[
R(x,y) = (y,x^y) = (y, y^{-1} x y), \quad x,y \in C.
\] 
This braided set is squarefree, self-distributive, and is trivial if and only if the subgroup of $G$ generated by $C$ is abelian.

\end{ex}

\begin{ex}  \label{Nexample}

We denote by $\mathcal N = \{\eta, \xi\}$ the unique nontrivial self-distributive braided set on two elements. We have $\eta^\eta = \eta^\xi = \xi$ and $\xi^\xi = \xi^\eta = \eta$.
This braided set is not squarefree.

\end{ex}

The category of braided sets admits a (unique) final object - the trivial braided set $\mathcal T_1$. 

\begin{defi}

For a braided set $(X, R)$ we denote by $\tau \colon X \to X_{\textup{triv}}$ the morphism of braided sets characterized by the following universal property. 
The braided set $X_{\textup{triv}}$ is trivial, and for every trivial braided set $Y$, and every morphism $\gamma \colon X \to Y$, there exists a unique morphism $\eta \colon X_{\textup{triv}} \to Y$ such that $\gamma = \eta \circ \tau$. We say that $X_{\textup{triv}}$, or rather $\tau$, is the trivialization of $X$.

\end{defi}

\begin{prop}

A trivialization of a braided set $X$ exists and is unique up to an isomorphism.

\end{prop}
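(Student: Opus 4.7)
The plan is to handle uniqueness by the standard abstract nonsense argument, and existence by an explicit quotient. I do not expect any serious obstacle: the only small subtlety is exhibiting the right equivalence relation.

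For uniqueness, suppose $\tau \colon X \to X_{\textup{triv}}$ and $\tau' \colon X \to X'_{\textup{triv}}$ are two trivializations. Applying the universal property of $\tau$ to $\gamma = \tau'$, and the universal property of $\tau'$ to $\gamma = \tau$, yields morphisms $\eta \colon X_{\textup{triv}} \to X'_{\textup{triv}}$ and $\eta' \colon X'_{\textup{triv}} \to X_{\textup{triv}}$ with $\tau' = \eta \circ \tau$ and $\tau = \eta' \circ \tau'$. The composition $\eta' \circ \eta$ is a morphism of trivial braided sets satisfying $(\eta' \circ \eta) \circ \tau = \tau$, so by the uniqueness clause of the universal property (applied with $\gamma = \tau$ itself) it equals the identity on $X_{\textup{triv}}$. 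Symmetrically $\eta \circ \eta' = \operatorname{id}_{X'_{\textup{triv}}}$, so $\eta$ is an isomorphism.

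For existence, let $\sim$ be the smallest equivalence relation on $X$ such that whenever $R(x,y) = (u,v)$ we have $y \sim u$ and $x \sim v$. Set $X_{\textup{triv}} = X/{\sim}$, equip it with the trivial braided structure $R_{\textup{triv}}([a],[b]) = ([b],[a])$, and let $\tau \colon X \to X_{\textup{triv}}$ be the quotient map. By the very definition of $\sim$, if $R(x,y) = (u,v)$ then $(\tau \times \tau)(R(x,y)) = ([u],[v]) = ([y],[x]) = R_{\textup{triv}}(\tau(x),\tau(y))$, so $\tau$ is a morphism of braided sets.

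To verify the universal property, take any morphism $\gamma \colon X \to Y$ into a trivial braided set $Y$. For any $(x,y) \in X \times X$ with $R(x,y) = (u,v)$ we obtain
\[
(\gamma(u),\gamma(v)) = (\gamma \times \gamma)(R(x,y)) = R_Y(\gamma(x),\gamma(y)) = (\gamma(y),\gamma(x)),
\]
so $\gamma(u) = \gamma(y)$ and $\gamma(v) = \gamma(x)$. Hence $\gamma$ is constant on the generating relations of $\sim$, therefore on each equivalence class, and so descends uniquely to a map of underlying sets $\eta \colon X_{\textup{triv}} \to Y$ with $\gamma = \eta \circ \tau$. That $\eta$ is itself a morphism of braided sets is immediate since $R_{\textup{triv}}$ and $R_Y$ are both the flip, and the uniqueness of $\eta$ among morphisms (as opposed to mere set maps) follows from surjectivity of $\tau$. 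This produces the desired trivialization.
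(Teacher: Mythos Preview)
Your proof is correct and essentially identical to the paper's: both construct $X_{\textup{triv}}$ as the quotient by the smallest equivalence relation generated by $x\sim v$ and $y\sim u$ whenever $R(x,y)=(u,v)$, verify that the quotient map is a morphism to a trivial braided set, and check the universal property by observing that any morphism to a trivial braided set is constant on equivalence classes. Your write-up is simply a bit more explicit (e.g., you spell out the uniqueness argument and the verification that $\tau$ is a morphism), but there is no substantive difference.
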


\begin{proof}

Let $\sim$ be the smallest equivalence relation on $X$ such that for all $x,y,z,w \in X$ with \[R(x,y) = (z,w)\] we have $x \sim w$ and $y \sim z$.
We let $X_{\textup{triv}}$ be the set of equivalence classes in $X$ for $\sim$, and denote by $\tau \colon X \to X_{\textup{triv}}$ the map taking an element to its equivalence class.
One readily checks that $\tau$ is a morphism.

To check the universal property let $Y$ be a trivial braided set, and let $\gamma \colon X \to Y$ be a morphism. 
Since the braided sets $X_{\textup{triv}}, Y$ are trivial, and $\tau$ is surjective, we just need to find a function $\eta \colon X_{\textup{triv}} \to Y$ with $\gamma = \eta \circ \tau$.
For that, it suffices to check that for every $x,y \in X$ with $\tau(x) = \tau(y)$ we have $\gamma(x) = \gamma(y)$.
This follows at once from the definition of $\sim$ and the fact that $\gamma$ is a morphism.

Uniqueness up to an isomorphism follows (as usual) from the universal property. 
\end{proof}

We call the equivalence classes appearing in the proof the connected components of $X$.
These connected components are the fibers of the map $\tau \colon X \to X_{\textup{triv}}$.
In case $X$ is a finite rack, this definition agrees with \cref{DefRackComponents}.

\begin{ex}

For a disjoint union $C$ of conjugacy classes $C_1, \dots, C_k$ of a group $G$, such that $C$ generates $G$, the trivialization of $C$ is the map $C \to C_{\textup{triv}} = \{C_1, \dots, C_k\}$ sending each element to the conjugacy class in which it lies.
The connected components of $C$ as a braided set are $C_1, \dots, C_k$.

\end{ex}

\begin{defi}

For a braided set $(X,R)$, a positive integer $n$, and an integer $1 \leq i < n$ we let $\sigma_i \in B_n$ act from the right on the $n$-fold Cartesian product $X^n$ of $X$ by $\mathrm{id}_{X^{i-1}} \times R \times \mathrm{id}_{X^{n-i-1}}$ namely
\[
(x_1, \dots, x_{i-1}, x_i, x_{i+1}, x_{i+2} \dots, x_n)^{\sigma_i} = (x_1, \dots, x_{i-1}, R(x_i, x_{i+1}), x_{i+2} \dots, x_n)
\] 
so we get a right action of $B_n$ on $X^n$.

\end{defi}
 
The association of $X^n$ to $X$ is a functor from the category of braided sets to the category of right actions of $B_n$.
In case $X$ is trivial, this action factors through the permutation action of $S_n$ on $X^n$.
In the special case where $X$ is a group $G$, we recover the action of $B_n$ on $\operatorname{Mor}(F_n, G)$,
and in case $X = C$ is a generating set for $G$ that is stable under conjugation by the elements of $G$, we recover the action of $B_n$ on $\operatorname{Mor}^C(F_n, G)$.

\begin{defi}

We denote the coinvariants of the action of $B_n$ on $X^n$, namely the collection of orbits, by $X^n/{B_n}$ and consider the disjoint union
\[
S_X = \bigcup_{n=1}^\infty X^n/B_n.
\]
Since the action of $B_m \times B_{n-m}$ on $X^m \times X^{n-m}$ is compatible with the natural inclusion $B_m \times B_{n-m} \hookrightarrow B_n$ and the identification $X^m \times X^{n-m} = X^n$, 
the set $S_X$ endowed with the binary operation of concatenation of representatives of orbits is a semigroup. 
We call $S_X$ the semigroup of coinvariants (or the structure semigroup) of the braided set $X$.

\end{defi}

\subsection{The Structure (Semi)group} \label{CPFW}

In case our braided set $X$ is self-distributive, the structure semigroup is given by the presentation
\[
S_X = \langle X : xy = yx^{y} \ \text{for all} \ (x,y) \in X^2 \rangle.
\]

\begin{defi} \label{DefRackGroup}

To a self-distributive braided set $X$ we also functorially associate the group given by the same presentation
\[
\Gamma_X = \langle X : y^{-1}xy = x^y \ \text{for all} \ (x,y) \in X^2 \rangle.
\]
This group is sometimes called the structure group of $X$.

\end{defi}

The morphism of braided sets $\gamma_X \colon X \to \Gamma_X$ enjoys the following universal property.
For every group $G$ and a morphism of braided sets $f \colon X \to G$ there exists a unique homomorphism of groups $\varphi \colon \Gamma_X \to G$ such that $f = \varphi \circ \gamma_X$. At times we will abuse notation viewing elements of $X$ as sitting inside $\Gamma_X$ via $\gamma_X$ (or even inside $S_X$).

We note that the natural homomorphism $\Gamma_X^{\textup{ab}} \to \Gamma_{X_{\textup{triv}}}$ is an isomorphism.
For a trivial braided set, the structure group is a free abelian group on the braided set.

\begin{defi}

We view the group of permutations of $X$ as acting on $X$ from the right.
We then have a homomorphism from $\Gamma_X$ to the group of permutations of $X$ sending $y \in X$ to the permutation $x \mapsto x^y$.
The image of this homomorphism will be denoted by $\operatorname{Inn}(X)$.

\end{defi}

One sometimes calls $\operatorname{Inn}(X)$ the group of inner automorphisms of $X$.
We get a right action of $\Gamma_X$ (or equivalently, of $\operatorname{Inn}(X)$) on $X$ whose orbits are the connected components of $X$.
The kernel of the homomorphism $\Gamma(X) \to \operatorname{Inn}(X)$ is contained in the center of $\Gamma_X$. In particular, in case $X$ is finite, the center of $\Gamma_X$ is of finite index in $\Gamma_X$.
In this case we denote for $x \in X$ by $m_x$ the order of the image of $x$ in $\operatorname{Inn}(X)$.
Then $x^{m_x}$ lies in the center of $S_X$ (and of $\Gamma_X$) and we put
\[
z = \prod_{x \in X} x^{m_x} \in Z(S_X).
\]

\begin{prop} \label{LocalizationExistence}

Let $S$ be a semigroup and let $z \in Z(S)$. Then there exists a monoid $S[z^{-1}]$ and a morphism of semigroups $\varphi \colon S \to S[z^{-1}]$ with $\varphi(z)$ invertible in $S[z^{-1}]$ such that for every monoid $M$ and a semigroup homomorphism $\psi \colon S \to M$ with $\psi(z)$ invertible, there exists a unique homomorphism of monoids $\theta \colon S[z^{-1}] \to M$ satisfying $\psi  = \theta \circ \varphi$.

\end{prop}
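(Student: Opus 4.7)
The construction follows the standard localization pattern, simplified by centrality of $z$. First I would, if necessary, adjoin a formal identity to $S$ to produce a monoid $S^1 = S \cup \{1\}$ (with $1 \cdot s = s \cdot 1 = s$ for all $s$ and $1 \cdot 1 = 1$), noting that $z$ remains central in $S^1$. On the set $S^1 \times \mathbb{N}$ I would define
\[
(s, n) \sim (t, m) \quad \Longleftrightarrow \quad \exists\, k \geq 0 \text{ such that } z^{k+m} s = z^{k+n} t \text{ in } S^1.
\]
Reflexivity and symmetry are immediate; transitivity uses centrality of $z$: if $z^{k+m} s = z^{k+n} t$ and $z^{\ell+p} t = z^{\ell+m} u$, then multiplying the first equation by $z^{\ell+p}$ and the second by $z^{k+n}$ and using commutativity of powers of $z$ with all elements yields $z^{k+\ell+m+p} s = z^{k+\ell+n+p} u$. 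I would then define $S[z^{-1}]$ to be the quotient $(S^1 \times \mathbb{N})/{\sim}$, equipped with the product $[s, n] \cdot [t, m] = [st, n + m]$. Well-definedness on equivalence classes, associativity, and the fact that $[1, 0]$ is a two-sided identity, all follow at once from the centrality of $z$.

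Next I would define $\varphi \colon S \to S[z^{-1}]$ by $\varphi(s) = [s, 0]$, and observe that $\varphi$ is a semigroup homomorphism. To see that $\varphi(z)$ is invertible, note that $[z, 0] \cdot [1, 1] = [z, 1]$ and $z \cdot 1 = z = z^{0+1} \cdot 1$, so $[z, 1] = [1, 0]$; in particular $[1, 1]$ is a two-sided inverse of $\varphi(z)$.

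For the universal property, given a monoid $M$ and a semigroup homomorphism $\psi \colon S \to M$ such that $\psi(z) \in M^\times$, I would set
\[
\theta([s, n]) = \psi(s) \cdot \psi(z)^{-n},
\]
with the convention $\psi(1) = 1_M$. To check well-definedness, suppose $z^{k+m} s = z^{k+n} t$; applying $\psi$ and multiplying both sides by $\psi(z)^{-(k+m+n)}$, we get $\psi(s) \psi(z)^{-n} = \psi(t) \psi(z)^{-m}$. One then verifies directly that $\theta$ is a monoid homomorphism and that $\theta \circ \varphi = \psi$. Uniqueness follows because every element of $S[z^{-1}]$ can be written as $\varphi(s) \cdot \varphi(z)^{-n}$, forcing $\theta([s, n]) = \theta(\varphi(s)) \cdot \theta(\varphi(z))^{-n} = \psi(s) \psi(z)^{-n}$.

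The only point requiring some care is the choice of the auxiliary factor $z^k$ in the definition of $\sim$: without it, the relation would fail to be transitive unless $z$ acts injectively on $S$, a hypothesis we do not wish to impose. With $z^k$ in place, no cancellation hypothesis is needed, and $\sim$ is precisely the smallest equivalence relation making the assignment $(s, n) \mapsto \psi(s) \psi(z)^{-n}$ well-defined universally. I expect this to be the only subtle check; the rest of the argument is routine, the centrality of $z$ removing all of the usual ordering complications one encounters in noncommutative Ore localization.
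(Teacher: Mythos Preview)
Your construction is essentially the same as the paper's: form pairs $(s,n)$ thought of as $s z^{-n}$, impose the equivalence $(s,n)\sim(t,m)$ whenever $z^{k+m}s=z^{k+n}t$ for some $k$, and quotient; the only cosmetic difference is that you adjoin a formal identity to $S$ first, whereas the paper works directly with $\mathbb N\times S$ and locates the identity as the class of $z^{-1}z$. One small slip in your transitivity computation: the combined relation should read $z^{k+\ell+m+p}s=z^{k+\ell+m+n}u$ (so $K=k+\ell+m$ witnesses $(s,n)\sim(u,p)$), not $z^{k+\ell+n+p}u$ as written.
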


\begin{proof}

Consider first the semigroup $\mathbb N \times S$ whose elements we write as $z^{-m}w$ for a nonnegative integer $m$, and $w \in S$. The product is given by
\[
z^{-m}w \cdot z^{-n} v = z^{-(m+n)}wv, \quad m,n \in \mathbb N, \ v,w \in S_X.
\]
There is a natural homomorphism of semigroups $S \to \mathbb N \times S$ sending $w \in S_X$ to $z^{-0}w$.
We define an equivalence relation $\sim$ on $\mathbb N \times S$ by $z^{-m}w \sim z^{-n}v$ if there exists a positive integer $r$ such that 
\[
z^{n+r}w = z^{m+r}v
\]
in $S$. Since multiplication in $\mathbb N \times S$ descends to multiplication on the set of equivalence classes for $\sim$ we can let $S[z^{-1}]$ be the quotient semigroup, and take $\varphi$ to be the composition $S  \to \mathbb N \times S \to S[z^{-1}]$.
We note that $z^{-1}z$ is the identity element of $S[z^{-1}]$ so $S[z^{-1}]$ is indeed a monoid.
The inverse of $\varphi(z)$ in $S[z^{-1}]$ is given by $z^{-2} z$ so $\varphi(z)$ is indeed invertible.
A routine check establishes the required universal property of $S[z^{-1}]$.
\end{proof}

\begin{prop} \label{StructureGroupViaLocalization}

The natural homomorphism of monoids $S_X[z^{-1}] \to \Gamma_X$ is an isomorphism.

\end{prop}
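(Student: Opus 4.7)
The strategy is to exhibit mutually inverse morphisms between $S_X[z^{-1}]$ and $\Gamma_X$ using the respective universal properties established earlier in the excerpt.

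First, composing the natural inclusion $X \hookrightarrow S_X$ with the natural map $S_X \to \Gamma_X$ yields a semigroup homomorphism $\iota \colon S_X \to \Gamma_X$. Since $\Gamma_X$ is a group, $\iota(z)$ is automatically invertible. Applying the universal property of $S_X[z^{-1}]$ from \cref{LocalizationExistence} produces a unique monoid homomorphism $\Phi \colon S_X[z^{-1}] \to \Gamma_X$ extending $\iota$.

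Next, I would verify that $S_X[z^{-1}]$ is in fact a group. The crucial input is that each central power $x^{m_x}$ commutes with every element of $S_X$, and in particular that the family $\{x^{m_x}\}_{x \in X}$ pairwise commutes. Fix $x_0 \in X$ and set $w_{x_0} = \prod_{x \in X,\, x \neq x_0} x^{m_x}$, a product that is unambiguous by centrality, so that $z = x_0^{m_{x_0}} w_{x_0} = w_{x_0} x_0^{m_{x_0}}$. Then the element
\[
u_{x_0} := x_0^{m_{x_0} - 1} \cdot w_{x_0} \cdot z^{-1} \in S_X[z^{-1}]
\]
satisfies $x_0 \cdot u_{x_0} = x_0^{m_{x_0}} w_{x_0} z^{-1} = z \cdot z^{-1} = 1$, and symmetrically $u_{x_0} \cdot x_0 = 1$. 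Since the image of $X$ generates $S_X[z^{-1}]$ as a monoid, every element of $S_X[z^{-1}]$ is invertible, and $S_X[z^{-1}]$ is a group.

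Third, the defining relation $xy = yx^y$ of $S_X$, once $y$ is invertible in $S_X[z^{-1}]$, rewrites as $y^{-1} x y = x^y$. Thus the natural map $X \to S_X[z^{-1}]$ is a morphism of braided sets into the group $S_X[z^{-1}]$ equipped with its conjugation braided structure. The universal property of $\Gamma_X$ then yields a group homomorphism $\Psi \colon \Gamma_X \to S_X[z^{-1}]$. Both composites $\Phi \circ \Psi$ and $\Psi \circ \Phi$ fix each element of $X$ pointwise; since $X$ generates $\Gamma_X$ as a group and $S_X[z^{-1}]$ as a monoid (in which all generators are now known to be invertible), both composites are the identity, so $\Phi$ is an isomorphism.

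The only step requiring actual computation is the construction of the inverses $u_{x_0}$; the remainder is formal manipulation of universal properties. Even that computation reduces to the observation that $z$ has been engineered precisely so that each $x_0 \in X$ has a central power appearing as a factor of $z$, which is where the definition $z = \prod_{x \in X} x^{m_x}$ pays off. I therefore expect no serious obstacle, provided one is careful to record that elements of the form $x^{m_x}$ lie in the center of $S_X$ as well as of $\Gamma_X$, so that reordering in the above identities is legitimate.
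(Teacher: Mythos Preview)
Your proof is correct and follows essentially the same route as the paper: show each $x\in X$ is invertible in $S_X[z^{-1}]$ via the explicit element $x^{m_x-1}\prod_{y\neq x}y^{m_y}\cdot z^{-1}$, conclude $S_X[z^{-1}]$ is a group, and then invoke the universal property of $\Gamma_X$ to produce the inverse homomorphism. One small slip: the image of $X$ does not by itself generate $S_X[z^{-1}]$ \emph{as a monoid}---you also need $z^{-1}$ in the generating set (as the paper notes); the conclusion still follows since $z^{-1}$ is invertible too, and once $S_X[z^{-1}]$ is known to be a group, $X$ generates it as a group because $z$ is a product of the $x$'s.
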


\begin{proof}

We first show that the monoid $S_X[z^{-1}]$ is a group. Since $S_X[z^{-1}]$ is generated by $X$ and $z^{-2}z$ it suffices to show that (the image in $S_X[z^{-1}]$ of) every $x \in X$ is invertible in $S_X[z^{-1}]$. 
Fix an $x \in X$. As $z^{-2}z$ and $y^{m_y}$ lie in the center of $S_X[z^{-1}]$ for every $y \in X$, we have
\[
x \cdot x^{m_x-1} \prod_{y \in X \setminus \{x\}} y^{m_y} \cdot z^{-2}z = 1 = 
x^{m_x-1} \prod_{y \in X \setminus \{x\}} y^{m_y} \cdot z^{-2}z \cdot x
\]
so $x$ is indeed invertible in $S_X[z^{-1}]$. We have thus shown that $S_X[z^{-1}]$ is a group.

Since $S_X[z^{-1}]$ is a group, the natural map $X \to S_X[z^{-1}]$ is a morphism of braided sets.
The universal property of $\Gamma_X$ therefore provides us with a group homomorphism $\Gamma_X \to S_X[z^{-1}]$
which is seen to be the inverse of $S_X[z^{-1}] \to \Gamma_X$ by checking that this is the case on the image of $X$.
\end{proof}

\begin{defi} \label{NotationTrivialization}

Let $X$ be a finite self-distributive braided set, and let $C_1, \dots, C_k$ be the connected components of $X$.
For nonnegative integers $n_1, \dots, n_k$, and $n = n_1 + \dots + n_k$, we put
\[
X(n_1, \dots, n_k) =  \{(x_1, \dots, x_n) \in X^n :  \ |\{1 \leq i \leq n : x_i \in C_j \}| = n_j \text{ for every } 1 \leq j \leq k \},
\]
and consider those tuples that generate $X$ as a braided set, namely we define
\[
X^*(n_1, \dots, n_k) = \{(x_1, \dots, x_n) \in X(n_1, \dots, n_k) :  \langle x_1, \dots, x_n \rangle = X \}.
\]
We define also
$
\left(C_1^{n_1} \times \dots \times C_k^{n_k}\right)^* = 
\left(C_1^{n_1} \times \dots \times C_k^{n_k}\right) \cap X^*(n_1, \dots, n_k).
$

\end{defi}

In case $n_1, \dots, n_k$ are positive, the elements of a tuple in $X(n_1, \dots, n_k)$ generate $X$ as a braided set if and only if they generate the group $\Gamma_X$, or equivalently the group $\operatorname{Inn}(X)$.
We note that $X^*(n_1, \dots, n_k)$ is stable under the action of $B_n$, and that for a nonnegative integer $N$ the set
\[
S_X^*(N) = \bigcup_{n_1, \dots, n_k \geq N} X^*(n_1, \dots, n_k)/B_n
\]
is an ideal (thus also a subsemigroup) of $S_X$. 

\begin{ex} \label{ExampleDefinitionBraidedSetToGroup}

In case $X$ is the disjoint union $C$ of conjugacy classes $C_1, \dots, C_k$ of a finite group $G$ such that $C$ generates $G$, and $n_1, \dots, n_k$ are positive, we have $X^*(n_1, \dots, n_k) = \operatorname{Sur}^C(F_n,G;n_1, \dots, n_k)$.

\end{ex}

\begin{prop} \label{BlockForBraidGroup}

The subset $C_1^{n_1} \times \dots \times C_k^{n_k}$ of $X(n_1, \dots, n_k)$ is a block for $B_n$ and its stabilizer is $B_{n_1, \dots, n_k}$. 

\end{prop}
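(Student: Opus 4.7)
The plan is to exploit the functoriality of the constructions $X \mapsto X^n$ and $X \mapsto X_{\textup{triv}}$ together with the fact that the $B_n$-action on a trivial braided set factors through the natural map $B_n \to S_n$. First I would apply the trivialization morphism $\tau \colon X \to X_{\textup{triv}}$ componentwise to obtain a $B_n$-equivariant map $\tau^n \colon X^n \to X_{\textup{triv}}^n$. Since $X_{\textup{triv}} = \{C_1, \dots, C_k\}$ and the action of $B_n$ on $X_{\textup{triv}}^n$ factors through $S_n$ (by the remark preceding \cref{ExampleDefinitionBraidedSetToGroup}), a point of $X_{\textup{triv}}^n$, viewed as a function $\{1, \dots, n\} \to \{C_1, \dots, C_k\}$, is acted on simply by permutation of its arguments.

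Next I would identify $C_1^{n_1} \times \dots \times C_k^{n_k}$ with the fiber of $\tau^n$ over the distinguished point
\[
\mathbf{t} = (\underbrace{C_1, \dots, C_1}_{n_1}, \underbrace{C_2, \dots, C_2}_{n_2}, \dots, \underbrace{C_k, \dots, C_k}_{n_k}) \in X_{\textup{triv}}^n.
\]
Viewing $\mathbf{t}$ as the function $\{1, \dots, n\} \to \{C_1, \dots, C_k\}$ sending $\{n_1 + \dots + n_{j-1} + 1, \dots, n_1 + \dots + n_j\}$ to $C_j$, its stabilizer in $S_n$ is by definition $S_{n_1} \times \dots \times S_{n_k}$. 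By $B_n$-equivariance of $\tau^n$ it follows that the $B_n$-stabilizer of the set $C_1^{n_1} \times \dots \times C_k^{n_k}$ is the preimage of this subgroup under $B_n \to S_n$, which is exactly $B_{n_1, \dots, n_k}$.

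For the block property I would argue as follows: for any $\beta \in B_n$, either $\beta \in B_{n_1, \dots, n_k}$, in which case $\beta$ fixes $\mathbf{t}$ and hence preserves its $\tau^n$-fiber $C_1^{n_1} \times \dots \times C_k^{n_k}$; or $\beta \notin B_{n_1, \dots, n_k}$, in which case $\mathbf{t}^\beta \neq \mathbf{t}$ and the translate $(C_1^{n_1} \times \dots \times C_k^{n_k})^\beta$ is the $\tau^n$-fiber over $\mathbf{t}^\beta$, which is disjoint from the fiber over $\mathbf{t}$. Finally, to see that the action of $B_n$ on $X(n_1, \dots, n_k)$ is well defined (so that the notion of a block makes sense on this subset), one observes that $X(n_1, \dots, n_k) = (\tau^n)^{-1}(S_n \cdot \mathbf{t})$, and this preimage is manifestly $B_n$-stable.

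There is essentially no obstacle here; the argument is a formal consequence of the functoriality of the $B_n$-action and the definition of the trivialization. The only point requiring a moment's care is the verification that the $B_n$-orbit of $\mathbf{t}$ in $X_{\textup{triv}}^n$ coincides with its $S_n$-orbit, but this is immediate from the observation (recorded right after the definition of the $B_n$-action on $X^n$) that the action on a trivial braided set factors through $S_n$.
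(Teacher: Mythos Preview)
Your proposal is correct and follows essentially the same approach as the paper: both identify $C_1^{n_1}\times\dots\times C_k^{n_k}$ as a fiber of the $B_n$-equivariant map to $X_{\textup{triv}}^n$ (restricted to $X(n_1,\dots,n_k)$), and both invoke the fact that the $B_n$-action on the trivial braided set factors through $S_n$. Your version simply spells out the distinguished point $\mathbf{t}$ and the stabilizer computation more explicitly than the paper does.
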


\begin{proof}

This follows from the fact that $C_1^{n_1} \times \dots \times C_k^{n_k}$ is a fiber of the map 
\[
X(n_1, \dots, n_k) \to X_{\textup{triv}}(n_1, \dots, n_k),
\]
from the fact that this map is a morphism of sets with a right action of $B_n$, 
and from the fact that the action of $B_n$ on $X_{\textup{triv}}(n_1, \dots, n_k)$ factors through the permutation action of $S_n$ on $X_{\textup{triv}}(n_1, \dots, n_k)$.
\end{proof}

The set
$
\left(C_1^{n_1} \times \dots \times C_k^{n_k}\right)_1^* = \{ (x_1, \dots, x_n) \in \left( C_1^{n_1} \times \dots \times C_k^{n_k} \right)^* : x_1 \cdots x_n =1\}
$
is therefore stable under the action of $B_{n_1, \dots, n_k}$.

\begin{cor} \label{ActionOfColoredBraidGroupCor}

The natural map 
$C_1^{n_1} \times \dots \times C_k^{n_k} / B_{n_1, \dots, n_k} \to X(n_1, \dots, n_k)/B_n$ is a bijection.

\end{cor}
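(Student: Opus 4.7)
The plan is to deduce the corollary directly from \propref{BijectionFromBlocks} with $S = X(n_1,\dots,n_k)$, $\Gamma = B_n$, $B = C_1^{n_1}\times\dots\times C_k^{n_k}$, and $H = B_{n_1,\dots,n_k}$. The first condition of \propref{BijectionFromBlocks}, that $B$ is a block for $\Gamma$ with stabilizer exactly $H$, is precisely the content of \propref{BlockForBraidGroup}. Hence the natural map
\[
(C_1^{n_1}\times\dots\times C_k^{n_k})/B_{n_1,\dots,n_k} \longrightarrow X(n_1,\dots,n_k)/B_n
\]
is automatically injective, and it remains only to verify that this block meets every $B_n$-orbit in $X(n_1,\dots,n_k)$.

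For the surjectivity step, I would exploit the trivialization morphism $\tau\colon X \to X_{\textup{triv}} = \{C_1,\dots,C_k\}$. Apply $\tau$ coordinate-wise to obtain a $B_n$-equivariant map $X(n_1,\dots,n_k)\to X_{\textup{triv}}^n$. By definition of $X(n_1,\dots,n_k)$, the image of any element lies in the subset of $X_{\textup{triv}}^n$ consisting of tuples with exactly $n_j$ coordinates equal to $C_j$ for each $j$. Since $X_{\textup{triv}}$ is trivial, the $B_n$-action on $X_{\textup{triv}}^n$ factors through the permutation action of $S_n$ (as noted in the proof of \propref{BlockForBraidGroup}), and this permutation action is visibly transitive on $n$-tuples with a prescribed multiset of entries. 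Using the well-known surjectivity of $B_n \twoheadrightarrow S_n$, we can therefore pick $\beta \in B_n$ such that $\tau^{\otimes n}\bigl((x_1,\dots,x_n)^\beta\bigr)$ equals the distinguished arrangement with the first $n_1$ entries equal to $C_1$, the next $n_2$ equal to $C_2$, and so on. This arrangement is the image under $\tau^{\otimes n}$ of the block $B$, so $(x_1,\dots,x_n)^\beta \in C_1^{n_1}\times\dots\times C_k^{n_k}$, as required.

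Combining these two observations with \propref{BijectionFromBlocks} yields the claimed bijection. There is no real obstacle here; the only thing to be careful about is to phrase the reduction so that one really uses the surjection $B_n \twoheadrightarrow S_n$ and the factorization of the $B_n$-action on the trivialization through $S_n$, rather than claiming transitivity of $B_n$ directly on $X(n_1,\dots,n_k)$ (which in general fails — indeed the whole point of the corollary is to reparametrize $B_n$-orbits on $X(n_1,\dots,n_k)$ by the finer $B_{n_1,\dots,n_k}$-orbits on the block).
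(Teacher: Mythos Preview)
Your proof is correct and follows essentially the same approach as the paper: invoke \propref{BijectionFromBlocks} together with \propref{BlockForBraidGroup}, and check that the block meets every $B_n$-orbit. The paper states this last point without justification, while you spell it out via the trivialization $\tau$ and the surjection $B_n\twoheadrightarrow S_n$; this is exactly the mechanism underlying the paper's assertion (and already implicit in the proof of \propref{BlockForBraidGroup}).
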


\begin{proof}

This follows from \cref{BijectionFromBlocks} and \cref{BlockForBraidGroup} because the orbit under the action of $B_n$ of every element from $X(n_1, \dots, n_k)$ contains an element from $C_1^{n_1} \times \dots \times C_k^{n_k}$.
\end{proof}

In what follows, every time we say that a positive integer is large enough, we mean large enough compared to $|X|$ (and not to any additional quantity).


\begin{prop} \label{MultSurjProp}

For positive integers $n_1, \dots, n_k$ large enough, 
for integers $m_1, \dots, m_k$, the integer $m = m_1 + \dots + m_k$, and $w \in X(m_1, \dots, m_k)/B_m$, the function 
\[
M_w \colon X^*(n_1, \dots,n_k)/B_n \to X^*(n_1+m_1, \dots, n_k + m_k)/B_{n+m}, \quad M_w(v) = wv
\]
of multiplication by $w$ from the left is surjective. 

\end{prop}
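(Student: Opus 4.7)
My plan is to produce, for every $u \in X^*(n_1+m_1, \dots, n_k+m_k)$, a preimage $v \in X^*(n_1, \dots, n_k)$ under $M_w$ by working in the semigroup $S_X$ and exploiting the localization $S_X[z^{-1}] = \Gamma_X$ from \cref{StructureGroupViaLocalization} together with the centrality of $z$.

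First I would reduce to the colored setting via \cref{ActionOfColoredBraidGroupCor}, which lets me track component multi-degrees precisely. Given a representative of $u$, the element $w^{-1} u$ is well-defined in $\Gamma_X$. By \cref{StructureGroupViaLocalization}, there exist an integer $r \geq 0$ and $\tilde v \in S_X$ (of multi-degree $(n_1 + r\nu_1, \dots, n_k + r\nu_k)$, where $\nu_j$ denotes the count of $C_j$-elements in $z$) such that $\tilde v = z^r w^{-1} u$ in $\Gamma_X$; this translates to the $S_X$-identity $w \tilde v = z^r u$. Centrality of $z$ now allows me to move the $z^r$ factor across, so that the $B$-orbits of $(w, \tilde v)$, $(z^r, u)$, and $(u, z^r)$ all coincide.

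The main technical step, and the central obstacle, is a Conway--Parker--Fried--V\"olklein type cancellation lemma: for $n_1, \dots, n_k$ large enough in terms of $|X|$, right multiplication by $z$ induces a bijection on $B$-orbits of $X^*$-tuples. Granted this lemma, I apply its surjectivity at the smaller multi-degree to find $v \in X^*(n_1, \dots, n_k)$ with $\tilde v$ in the orbit of $(v, z^r)$; then the equality of orbits $(w, v, z^r) = (w, \tilde v) = (u, z^r)$ allows me to invoke injectivity at the larger multi-degree to cancel the common $z^r$, yielding $(w, v) \sim u$ as required.

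Proving the cancellation lemma is the heart of the matter. The key point is that $z = \prod_{x \in X} x^{m_x}$ mentions every element of $X$, so any braid-invariant obstruction to cancellation can be trivialized by rearranging tuples using the defining relations --- provided the tuples are long enough to accommodate the necessary maneuvers. A secondary subtlety is ensuring that the produced $v$ lies in $X^*$ rather than merely $X$: since $u \in X^*$ generates $X$ as a braided set and $w$ contributes only a prefix of bounded multi-degree $(m_1, \dots, m_k)$, a pigeonhole argument at sufficiently large $n_j$ forces $v$ to retain all needed generators of $X$. The ineffective nature of the "large enough" threshold is inherited from this cancellation lemma, consistent with the ineffectivity acknowledged after \cref{MobiusVonMangoldtConjectureLargeFF}.
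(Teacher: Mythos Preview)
Your proposal defers the entire difficulty to the unproved ``cancellation lemma,'' and in the paper's logical order that lemma sits \emph{downstream} of \cref{MultSurjProp}, not upstream. The surjectivity half of your cancellation lemma is the special case $w = z$ of the very proposition you are proving; the injectivity half is \cref{MultInjCor}, which the paper deduces \emph{from} \cref{MultSurjProp} via the eventually-constant pigeonhole argument. Your intermediate claim that the $\Gamma_X$-equality $\tilde v = z^r w^{-1} u$ ``translates to the $S_X$-identity $w \tilde v = z^r u$'' is likewise the injectivity of $S_X^*(N) \hookrightarrow \Gamma_X$ from \cref{EmbIntoGroupThm}, again a consequence of \cref{MultSurjProp}; the localization construction by itself only gives $z^{r'} w \tilde v = z^{r'+r} u$ in $S_X$ for some further $r' \geq 0$, and stripping that $z^{r'}$ is once more the cancellation you have not established. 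So as written the argument is circular.

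The paper's proof is direct and uses none of this machinery. One reduces to $m = 1$, so $w \in C_j$ for some $j$. Given a target $s \in X^*(n_1, \dots, n_j+1, \dots, n_k)$, pigeonhole on the $n_j+1$ entries from $C_j$ (for $n_j$ large) produces some $x \in C_j$ appearing more than $m_x$ times, so after braiding one has $s = x^{m_x} s'$ in $S_X$ with $s' \in X^*$. Since $x$ and $w$ lie in the same component and the entries of $s'$ generate $\operatorname{Inn}(X)$, one shows $x^{m_x} s' = w^{m_x} s'$ in $S_X$ by successively conjugating the central block $x^{m_x}$ by entries $y$ of $s'$ along a path from $x$ to $w$ in $\operatorname{Inn}(X)$. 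Then $s = w \cdot (w^{m_x-1} s')$ exhibits the required preimage, and $w^{m_x-1} s' \in X^*(n_1, \dots, n_k)$ because $s'$ already generates $X$.
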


\begin{proof}

Since composition of surjective functions is surjective, it suffices to treat the case $m=1$, namely we assume that $w \in X$, so $w \in C_j$ for some $1 \leq j \leq k$.
Take a representative 
\[
s \in X^*(n_1, \dots, n_{j-1}, n_j+1, n_{j+1}, \dots, n_k)
\]
for an orbit under the action of $B_{n+1}$.
As $n_j$ is large enough, it follows from the pigeonhole principle that there exists an $x \in C_j$ that appears in $s$ more than $m_x$ times. We can therefore assume, without changing the orbit of $s$ under the action of $B_{n+1}$, that the first $m_x + 1$ entries of $s$ are all equal to $x$. In particular, we can write $s = x^{m_x} s'$ (the equality taking place in $S_X$) for some 
\[
s' \in X^*(n_1, \dots, n_{j-1}, n_j+1-m_x, n_{j+1}, \dots, n_k)/B_{n+1-m_x}.
\]

Since $x$ and $w$ lie in the same connected component of $X$, there exists $g \in \operatorname{Inn}(X)$ such that $x^g = w$.
The entries of (a representative of) $s'$ generate $X$, so these entries generate $\operatorname{Inn}(X)$ as a group.
In particular, we can write $g$ as a product some of these entries and their inverses (allowing repetition).
Therefore, in order to show that $s = w^{m_x}s'$ in $S_X$ and deduce the required surjectivity, 
it suffices to observe that for every $y \in X$ that appears in (a representative of) $s'$, and every $x_1, \dots, x_\ell \in X$ for which $x_1 \cdots x_\ell \in Z(S_X)$, 
we have the equality
$
x_1 \cdots x_\ell s' = x_1^y \cdots x_\ell^y s'
$ 
in $S_X$.
\end{proof}

\begin{cor} \label{MultInjCor}

For every $N$ large enough, and every $w \in S_X$, multiplication by $w$ from the left is an injective function from $S_X^*(N)$ to itself.

\end{cor}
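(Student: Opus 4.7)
The plan is to leverage the surjectivity of \cref{MultSurjProp} together with finiteness of the orbit sets to obtain bijectivity, and then to deduce injectivity of left multiplication by arbitrary elements of $S_X$ by decomposing them into letters. Applying \cref{MultSurjProp} with $m_j=1$ and $m_i=0$ for $i\neq j$, for $n_1,\dots,n_k$ large enough, left multiplication by any $y\in C_j$ is a surjection
\[
M_y\colon X^*(n_1,\dots,n_k)/B_n \twoheadrightarrow X^*(n_1,\dots,n_j+1,\dots,n_k)/B_{n+1}.
\]
Since $X$ is finite, the cardinality $f(n_1,\dots,n_k) \defeq |X^*(n_1,\dots,n_k)/B_n|$ is a positive integer, and the existence of these surjections shows that $f$ is coordinatewise non-increasing on the region $\{n_i\geq N_1\}$, where $N_1$ is the threshold produced by \cref{MultSurjProp}.

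Any $\mathbb{Z}_{>0}$-valued function that is coordinatewise non-increasing on such a region attains its minimum at some tuple $(n_1^*,\dots,n_k^*)$, and by monotonicity equals that minimum throughout the upward quadrant $\{n_i\geq n_i^*\text{ for all }i\}$. Setting $N = \max(N_1,n_1^*,\dots,n_k^*)$, I obtain that for all $n_i\geq N$ and every $y\in X$, the surjection $M_y$ is a map between finite sets of equal cardinality, hence a bijection, and in particular injective. Thus every single-letter left-multiplication is injective as a self-map of $S_X^*(N)$.

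For a general $w\in S_X$ represented by $(y_1,\dots,y_m)\in X^m$, associativity of multiplication in $S_X$ gives the identity
\[
M_w = M_{y_1}\circ M_{y_2}\circ\cdots\circ M_{y_m}
\]
as maps $S_X^*(N)\to S_X^*(N)$; the intermediate images stay in $S_X^*(N)$ because single-letter multiplication only increases the indices $n_i$. Each factor is injective by the previous paragraph, so the composition $M_w$ is injective, as required. The only substantive input here is \cref{MultSurjProp}; the remainder is a routine monotonicity-plus-finiteness argument, and I do not anticipate any real obstacle beyond carefully unraveling the decomposition of $w$ into letters and checking that the bound $N$ can be chosen uniformly in $w$ (which it can, since $N$ depends only on $|X|$).
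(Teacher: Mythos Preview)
Your proposal is correct and follows essentially the same approach as the paper: both reduce to single-letter multiplication via composition, then use \cref{MultSurjProp} to see that $|X^*(n_1,\dots,n_k)/B_n|$ is nonincreasing in each coordinate, hence eventually constant (being a positive-integer-valued function), so that the surjections become bijections. Your version simply spells out the ``eventually constant'' step in slightly more detail by naming a minimizing tuple and passing to the upward quadrant.
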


\begin{proof}

Since composition of injective functions is injective, it suffices to consider the case $w \in C_j$ for some $1 \leq j \leq k$.
In this case, it is enough to show that for integers $n_1, \dots, n_k \geq N$, the function
\[
M_w \colon X^*(n_1, \dots, n_{j-1},n_j,n_{j+1}, \dots, n_k)/B_n \to X^*(n_1, \dots, n_{j-1},n_j+1,n_{j+1}, \dots, n_k)/B_{n+1}
\]
of multiplication by $w$ from the left is injective.
It follows from \cref{MultSurjProp} that for large enough $N$, the function $(n_1, \dots, n_k) \mapsto |X^*(n_1, \dots, n_k)/B_n|$ is nonincreasing in each coordinate hence eventually constant because its values are positive integers.
Therefore, for large enough $N$, the function $M_w$ maps one finite set onto another finite set of the same cardinality.
The required injectivity follows.
\end{proof}

\begin{thm} \label{EmbIntoGroupThm}

For every $N$ large enough, the natural homomorphism of semigroups $S_X^*(N) \to \Gamma_X$ is injective. 

\end{thm}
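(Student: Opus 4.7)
The plan is to combine the identification of $\Gamma_X$ with a localization of $S_X$ (\propref{StructureGroupViaLocalization}) with the injectivity of left multiplication by any element of $S_X$ on $S_X^*(N)$ (\corref{MultInjCor}). The key observation is that two elements $u,v \in S_X$ have the same image in $\Gamma_X \cong S_X[z^{-1}]$ precisely when a central power of $z$ equalizes them inside $S_X$, and on $S_X^*(N)$ such a central power cannot ``forget'' any information.

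More concretely, first I would unwind the equivalence relation $\sim$ constructed in the proof of \propref{LocalizationExistence}. Writing every element of $S_X$ as $z^{-0} w$ with $w \in S_X$, one has $z^{-0}u \sim z^{-0}v$ if and only if there exists an integer $r \geq 1$ with $z^r u = z^r v$ in $S_X$. Hence, if $u,v \in S_X^*(N)$ are such that their common image in $\Gamma_X$ coincides, then passing through the isomorphism of \propref{StructureGroupViaLocalization} we obtain some $r \geq 1$ satisfying $z^r u = z^r v$ in $S_X$.

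Next I would take $N$ large enough so that \corref{MultInjCor} applies; that corollary gives, for every element of $S_X$, an injective left-multiplication map on $S_X^*(N)$. Applying this with the specific element $z^r \in Z(S_X)$ (and noting that $S_X^*(N)$ is closed under left multiplication by arbitrary elements of $S_X$, since such multiplication only increases the color-counts and preserves the generating property), the equality $z^r u = z^r v$ forces $u = v$ in $S_X$. This proves the desired injectivity of $S_X^*(N) \to \Gamma_X$.

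The main (hidden) obstacle here is not in this argument itself but in the prerequisite \corref{MultInjCor}, whose proof relied on combining the pigeonhole-plus-centralization surjectivity of \propref{MultSurjProp} with the finiteness of each stratum $X^*(n_1,\dots,n_k)/B_n$ to turn surjectivity into bijectivity. Once that input is granted, the theorem itself reduces, as above, to observing that the passage $S_X \to \Gamma_X$ is a localization at a central element on which left multiplication is already injective on $S_X^*(N)$.
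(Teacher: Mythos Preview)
Your proposal is correct and follows essentially the same approach as the paper: use \propref{StructureGroupViaLocalization} to reduce to injectivity of $S_X^*(N) \to S_X[z^{-1}]$, unwind the localization to get $z^r u = z^r v$ for some $r$, and then cancel $z^r$ via \corref{MultInjCor}. The paper's proof is just a terser version of exactly these steps.
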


\begin{proof}

In view of \cref{StructureGroupViaLocalization}, it suffices to show that the natural homomorphism of semigroups $S_X^*(N) \to S_X[z^{-1}]$ is injective.
For that, let $u,v \in S_X^*(N)$ for which $z^r u = z^r v$ for some positive integer $r$. Invoking \cref{MultInjCor} with $w = z^r$, we get that $u = v$ so injectivity follows.
\end{proof}

\subsection{Products of Braided Sets}

\begin{defi}

Let $X,Y$ be self-distributive braided sets.
Then the group homomorphism \[\Gamma_{X \times Y} \to \Gamma_X \times \Gamma_Y\] (arising from the functoriality of the structure group) induces a homomorphism
\[
\Gamma'_{X \times Y} \to \Gamma_X' \times \Gamma_Y'
\]
on the commutator subgroups. In case the latter homomorphism is injective, we say that $(X,Y)$ is a productive pair.

\end{defi}

\begin{prop} \label{TrivialProductivity}

Let $X$ be a self-distributive braided set. Then the pair $(X,\mathcal T_2)$ is productive.

\end{prop}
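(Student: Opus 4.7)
Observe first that $\Gamma_{\mathcal T_2}$ is the free abelian group of rank two, so $\Gamma'_{\mathcal T_2} = \{1\}$ and the natural homomorphism $\Gamma'_{X \times \mathcal T_2} \to \Gamma'_X \times \Gamma'_{\mathcal T_2}$ coincides with the restriction of the first-projection homomorphism
\[
\pi \colon \Gamma_{X \times \mathcal T_2} \to \Gamma_X, \quad (x,\epsilon) \mapsto x,
\]
to the commutator subgroup. It thus suffices to show that $\pi$ is injective on $\Gamma'_{X \times \mathcal T_2}$. The map $\pi$ is split by $s \colon \Gamma_X \to \Gamma_{X \times \mathcal T_2}$, $x \mapsto (x,0)$, which is well defined because the relations $y^{-1} x y = x^y$ of $\Gamma_X$ are among the defining relations of $\Gamma_{X \times \mathcal T_2}$. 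This yields a semidirect product decomposition $\Gamma_{X \times \mathcal T_2} = N \rtimes s(\Gamma_X)$, where $N := \ker \pi$.

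The heart of the proof is the claim that $N$ lies in the center of $\Gamma_{X \times \mathcal T_2}$. Granting this, the semidirect product is a direct product, so $\Gamma'_{X \times \mathcal T_2} = N' \times s(\Gamma_X)' = \{1\} \times s(\Gamma'_X)$, and the restriction of $\pi$ to the commutator subgroup is the inverse of $s\vert_{\Gamma'_X}$, in particular injective, which is what we want.

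Write $x_\epsilon := (x,\epsilon)$ and set $u_x := x_1 x_0^{-1} \in N$ for $x \in X$. The defining relations of $\Gamma_{X \times \mathcal T_2}$ read $y_\delta^{-1} x_\epsilon y_\delta = (x^y)_\epsilon$ for all $x, y \in X$ and $\epsilon, \delta \in \{0,1\}$. Applying this identity twice (once with outer index $0$, once with outer index $1$), we compute
\[
u_x \cdot y_\epsilon \cdot u_x^{-1} \;=\; x_1 (x_0^{-1} y_\epsilon x_0) x_1^{-1} \;=\; x_1 (y^x)_\epsilon x_1^{-1} \;=\; y_\epsilon,
\]
the last step being the same relation rewritten as $y_\epsilon = x_1 (y^x)_\epsilon x_1^{-1}$. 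Thus every $u_x$ commutes with every generator $y_\epsilon$ and therefore lies in the center of $\Gamma_{X \times \mathcal T_2}$.

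It remains to verify that $N$ is generated by $\{u_x : x \in X\}$. Quotienting $\Gamma_{X \times \mathcal T_2}$ by the normal closure of these elements identifies $x_0$ with $x_1$ for each $x \in X$, after which the defining relations collapse to $y^{-1} x y = x^y$, yielding a quotient isomorphic to $\Gamma_X$ through which $\pi$ factors. Hence $N$ equals this normal closure; since each $u_x$ is central, the normal closure coincides with the subgroup $\langle u_x : x \in X \rangle$, which is therefore central, closing the argument. I anticipate no genuine obstacle: the only substantive computation is the centrality of $u_x$, a two-line consequence of the defining relations of the structure group.
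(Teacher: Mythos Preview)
Your proof is correct and takes a genuinely different route from the paper's. Both arguments rest on the same elementary observation---that conjugation by $(x,0)$ and by $(x,1)$ act identically on every generator of $\Gamma_{X\times\mathcal T_2}$---but they package it differently. The paper introduces the two sections $\psi_0,\psi_1\colon\Gamma_X\to\Gamma_{X\times\mathcal T_2}$ (via $x\mapsto(x,0)$ and $x\mapsto(x,1)$), checks from this observation that $\psi_0'=\psi_1'$ on commutators, and then verifies directly that the common map $\psi'$ is a two-sided inverse to $\pi|_{\Gamma'_{X\times\mathcal T_2}}$. You instead use only the section $s=\psi_0$ to split $\pi$, exhibit the kernel $N=\ker\pi$ as generated by the central elements $u_x=(x,1)(x,0)^{-1}$, and conclude that the semidirect product $N\rtimes s(\Gamma_X)$ is in fact direct; injectivity of $\pi$ on the commutator subgroup then follows formally. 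Your approach buys a slightly stronger structural conclusion (namely that $\Gamma_{X\times\mathcal T_2}$ is a direct product of $\Gamma_X$ with a central abelian group), while the paper's is more hands-on, working entirely inside the commutator subgroup without identifying the full group structure.
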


\begin{proof}

Since $\Gamma_{\mathcal T_2}$ is abelian, we need to show that the natural surjection $\pi \colon \Gamma'_{X \times \mathcal T_2} \to \Gamma_X'$ is an isomorphism.
We will do so by constructing an inverse. The inclusion of the braided subset $X \times \{0\}$ into $X \times \mathcal T_2$ induces a group homomorphism 
$\psi_0 \colon \Gamma_{X} \stackrel{\sim}{\to} \Gamma_{X \times \{0\}} \to \Gamma_{X \times \mathcal T_2}$. We then get a homomorphism $\psi_0' \colon \Gamma_X' \to \Gamma_{X \times \mathcal T_2}'$.
Arguing in the same way with $X \times \{1\}$ in place of $X \times \{0\}$, we get a homomorphism $\psi_1' \colon \Gamma_X' \to \Gamma_{X \times \mathcal T_2}'$.
We claim that these homomorphisms coincide, namely $\psi_0' = \psi_1'$.

The commutator subgroup is generated by conjugates of commutators of elements in a generating set, so it suffices to check that $\psi_0'([x,y]^g) = \psi_1'([x,y]^g)$ for every $x,y \in X$ and $g \in \Gamma_X$. Equivalently, it is enough to show that $\psi_0([x,y])^{\psi_0(g)} = \psi_1([x,y])^{\psi_1(g)}$.
Since conjugation by $\psi_0(g)$ coincides with conjugation by $\psi_1(g)$, our task is to show that $[\psi_0(x), \psi_0(y)] = [\psi_1(x), \psi_1(y)]$. Indeed we have
\begin{equation} \label{ZerosTurningOnes}
\begin{split}
(x,0)(y,0)(x,0)^{-1}(y,0)^{-1} = (x,1)(y,0)(x,1)^{-1}(y,0)^{-1} = (x,1)(y,1)(x,1)^{-1}(y,1)^{-1}.
\end{split}
\end{equation}
We have thus proven our claim that $\psi_0' = \psi_1'$. We henceforth denote this map by $\psi'$.

Now we claim that $\psi'$ is an inverse of the natural surjection $\pi$.
Since the inclusion of the braided set $X \times \{0\}$ into $X \times \mathcal T_2$ is a section of the projection $X \times \mathcal T_2 \to X$, it follows that $\pi \circ \psi'$ is the identity on $\Gamma_X'$. It remains to check that the homomorphism $\psi' \circ \pi$ is the identity on $\Gamma_{X \times \mathcal T_2}'$. 
It suffices to show that $\psi' \circ \pi$ acts as the identity on some generating set of $\Gamma_{X \times \mathcal T_2}'$. Since $X \times \mathcal T_2$ is a generating set for $\Gamma_{X \times \mathcal T_2}$ which is stable under conjugation, the subgroup $\Gamma_{X \times \mathcal T_2}'$ is generated by commutators of elements in $X \times \mathcal T_2$. Therefore, it is enough to check that $\psi' \circ \pi$ acts as the identity on such commutators. 
This follows at once from \cref{ZerosTurningOnes}. We have thus shown that $\psi'$ and $\pi$ are inverse to each other.
\end{proof}

\begin{cor} \label{FirstInjectivityAssocGroup}

For a productive pair $(X,Y)$ of self-distributive braided sets, the natural group homomorphism
$
\Gamma_{X \times Y} \to \Gamma_X \times \Gamma_Y \times \Gamma_{X \times Y}^{\textup{ab}}
$
is injective.

\end{cor}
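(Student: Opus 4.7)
The plan is to chase the definitions and decompose the claim by restriction to the commutator subgroup. Write $\Phi \colon \Gamma_{X \times Y} \to \Gamma_X \times \Gamma_Y \times \Gamma_{X \times Y}^{\textup{ab}}$ for the homomorphism in question, where the first two coordinates come from the functorial map $\pi \colon \Gamma_{X \times Y} \to \Gamma_X \times \Gamma_Y$ (induced by the two projections $X \times Y \to X$ and $X \times Y \to Y$) and the third coordinate is the abelianization quotient $\alpha \colon \Gamma_{X \times Y} \to \Gamma_{X \times Y}^{\textup{ab}}$.

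The key observation is that the kernel of $\alpha$ is by definition $\Gamma_{X \times Y}'$. So suppose $g \in \ker \Phi$. Then $\alpha(g) = 1$ already forces $g \in \Gamma_{X \times Y}'$, and in addition $\pi(g) = 1$ in $\Gamma_X \times \Gamma_Y$. I would now invoke functoriality: any group homomorphism carries commutators to commutators, so $\pi$ restricts to a map $\pi' \colon \Gamma_{X \times Y}' \to \Gamma_X' \times \Gamma_Y'$ (using $(\Gamma_X \times \Gamma_Y)' = \Gamma_X' \times \Gamma_Y'$). Under $\pi'$ we still have $\pi'(g) = 1$.

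The productivity hypothesis is precisely that $\pi'$ is injective. Hence $g = 1$, which shows $\ker \Phi = \{1\}$ and yields injectivity of $\Phi$. This is really the whole argument; there is no serious obstacle once \propref{TrivialProductivity} (or the general notion of productive pair) is in hand, because the corollary is essentially a repackaging of productivity into a statement that combines the commutator-level injection with the abelianization. The only point one might worry about is verifying that the image of $\Gamma_{X \times Y}'$ in $\Gamma_X \times \Gamma_Y$ actually lands in $\Gamma_X' \times \Gamma_Y'$, but this is automatic because $\pi$ is a group homomorphism and the commutator subgroup of a direct product equals the product of the commutator subgroups.
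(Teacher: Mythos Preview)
Your proof is correct and follows essentially the same approach as the paper's own proof: take $g$ in the kernel, use the abelianization coordinate to force $g\in\Gamma_{X\times Y}'$, and then use productivity to conclude $g=1$ from $\pi(g)=1$. Your version is slightly more explicit about why $\pi$ restricts to a map $\Gamma_{X\times Y}'\to\Gamma_X'\times\Gamma_Y'$, but the logic is identical.
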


\begin{proof}

Let $a$ be an element in the kernel of our homomorphism. Since $a$ maps to the identity in $\Gamma_{X \times Y}^{\textup{ab}}$ we get that $a \in \Gamma_{X \times Y}'$.
It follows from the definition of a productive pair that $a = 1$. We have thus shown that the kernel of our homomorphism is trivial, so our homomorphism is indeed injective.
\end{proof}

\begin{defi}

We say that a pair of braided sets $(X,Y)$ is synchronized if the natural surjection 
\[
(X \times Y)_{\text{triv}} \to X_{\text{triv}} \times Y_{\text{triv}}
\] 
is a bijection.

\end{defi}

For every braided set $X$ and every squarefree braided set $Y$, the pair $(X,Y)$ is synchronized.
In particular, for every braided set $X$, the pair $(X, \mathcal T_2)$ is synchronized.

\begin{cor} \label{SecondInjectivityAssocGroup}

For a productive synchronized pair $(X,Y)$ of self-distributive braided sets, the natural group homomorphism
$
\Gamma_{X \times Y} \to \Gamma_X \times \Gamma_Y \times \Gamma_{X_{\textup{triv}} \times Y_{\textup{triv}}}
$
is injective.

\end{cor}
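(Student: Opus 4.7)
The plan is to reduce \corref{SecondInjectivityAssocGroup} to \corref{FirstInjectivityAssocGroup} by identifying the third factor $\Gamma_{X \times Y}^{\textup{ab}}$ with $\Gamma_{X_{\textup{triv}} \times Y_{\textup{triv}}}$ in a way compatible with the natural maps from $\Gamma_{X \times Y}$.

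First, I would invoke the fact recorded earlier in the paper that for every self-distributive braided set $Z$, the natural homomorphism $\Gamma_Z^{\textup{ab}} \to \Gamma_{Z_{\textup{triv}}}$ is an isomorphism. Applied with $Z = X \times Y$, this produces a canonical isomorphism
\[
\Gamma_{X \times Y}^{\textup{ab}} \xrightarrow{\sim} \Gamma_{(X \times Y)_{\textup{triv}}}.
\]
Next, the synchronization hypothesis gives a bijection of (trivial) braided sets $(X \times Y)_{\textup{triv}} \to X_{\textup{triv}} \times Y_{\textup{triv}}$. Because the structure group of a trivial braided set is simply the free abelian group on its underlying set, any bijection of trivial braided sets induces an isomorphism of the associated structure groups. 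Hence we obtain a further isomorphism
\[
\Gamma_{(X \times Y)_{\textup{triv}}} \xrightarrow{\sim} \Gamma_{X_{\textup{triv}} \times Y_{\textup{triv}}}.
\]

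Composing these two isomorphisms gives a canonical identification $\Gamma_{X \times Y}^{\textup{ab}} \xrightarrow{\sim} \Gamma_{X_{\textup{triv}} \times Y_{\textup{triv}}}$. By naturality of the universal properties involved, this identification intertwines the map $\Gamma_{X \times Y} \to \Gamma_{X \times Y}^{\textup{ab}}$ (appearing in \corref{FirstInjectivityAssocGroup}) with the natural map $\Gamma_{X \times Y} \to \Gamma_{X_{\textup{triv}} \times Y_{\textup{triv}}}$ (appearing in \corref{SecondInjectivityAssocGroup}): indeed, both maps factor through the morphism of braided sets $X \times Y \to (X \times Y)_{\textup{triv}}$, and the functoriality of $\Gamma_{(-)}$ pins down the rest. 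The injectivity claim of \corref{SecondInjectivityAssocGroup} therefore becomes equivalent to that of \corref{FirstInjectivityAssocGroup}, which applies since $(X,Y)$ is productive.

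No serious obstacle arises here; the only verification demanding attention is the commutativity of the relevant diagram of natural homomorphisms, but this is immediate from the universal properties used to define each structure group and from the fact that everything in sight is induced by the single morphism of braided sets $X \times Y \to X_{\textup{triv}} \times Y_{\textup{triv}}$.
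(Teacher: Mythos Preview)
Your proof is correct and follows essentially the same approach as the paper: reduce to \corref{FirstInjectivityAssocGroup} by identifying $\Gamma_{X\times Y}^{\textup{ab}}\cong\Gamma_{(X\times Y)_{\textup{triv}}}$ and then using the synchronization hypothesis to pass to $\Gamma_{X_{\textup{triv}}\times Y_{\textup{triv}}}$. The paper's version is terser but the logic is identical.
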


\begin{proof}

In view of \cref{FirstInjectivityAssocGroup}, it suffices to check that the natural homomorphism from $\Gamma_{X \times Y}^{\textup{ab}}$, which we identify with $\Gamma_{(X\times Y)_{\textup{triv}}}$, to $\Gamma_{X_{\textup{triv}} \times Y_{\textup{triv}}}$ is injective.
This follows at once from our assumption that the pair $(X,Y)$ is synchronized.
\end{proof}


\begin{thm} \label{ActionOnPairsBraidGroup}

Let $(X,Y)$ be a productive synchronized pair of self-distributive braided sets. 
Let $N$ be sufficiently large, and let $v,w \in S^*_{X \times Y}(N)$ satisfying the following three conditions.
\begin{itemize}

\item The projection of $v$ to $S_X$ coincides with the projection of $w$ to $S_X$.

\item The projection of $v$ to $S_Y$ coincides with the projection of $w$ to $S_Y$.

\item For every $t \in X_{\textup{triv}} \times Y_{\textup{triv}}$ the number of entries of $v$ that project to $t$ coincides with the number of entries of $w$ that project to $t$.

\end{itemize}
Then $v=w$.

\end{thm}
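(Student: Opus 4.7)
The plan is to convert the desired equality $v = w$ in the semigroup $S_{X \times Y}$ into an equality in the structure group $\Gamma_{X \times Y}$, and then to split this equality into three independent equalities in simpler groups, one for each hypothesis.

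Concretely, I would first choose $N$ large enough that \thmref{EmbIntoGroupThm} applies to the braided set $X \times Y$, so that the natural semigroup homomorphism $S^*_{X \times Y}(N) \to \Gamma_{X \times Y}$ is injective; it therefore suffices to prove $v = w$ in $\Gamma_{X \times Y}$. Since $(X,Y)$ is a productive synchronized pair, \corref{SecondInjectivityAssocGroup} supplies an injection
\[
\Gamma_{X \times Y} \hookrightarrow \Gamma_X \times \Gamma_Y \times \Gamma_{X_{\textup{triv}} \times Y_{\textup{triv}}},
\]
so it is enough to check that $v$ and $w$ have the same image in each of the three factors.

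The first two factors are handled by the first two hypotheses. The image of $v$ in $\Gamma_X$ is the composition $S_{X \times Y} \to S_X \to \Gamma_X$ arising from functoriality of the structure (semi)group applied to the projection $X \times Y \to X$, so the assumption that $v$ and $w$ have equal projections in $S_X$ forces equal images in $\Gamma_X$; the argument for $\Gamma_Y$ is identical. For the third factor, observe that $\Gamma_{X_{\textup{triv}} \times Y_{\textup{triv}}}$ is free abelian on $X_{\textup{triv}} \times Y_{\textup{triv}}$, since the structure group of a trivial braided set is the free abelian group on its underlying set. The image of $v$ in this free abelian group is simply the formal sum, over the entries of $v$, of their projections to $X_{\textup{triv}} \times Y_{\textup{triv}}$; the third hypothesis says exactly that this multiset of projections agrees for $v$ and $w$.

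The main subtlety in executing this plan is verifying that each of the three invariants is actually well-defined on the orbits constituting elements of $S_{X \times Y}$, not merely on representative tuples. The first two invariants descend from tuples to semigroup elements by functoriality, since the projections $X \times Y \to X$ and $X \times Y \to Y$ are morphisms of braided sets and hence induce semigroup maps on $S_{-}$. For the third invariant, the action of $B_n$ on $(X \times Y)_{\textup{triv}}^n$ factors through the permutation action of $S_n$ (the action on a trivial braided set is by permutations), so the multiset of values in $(X \times Y)_{\textup{triv}}$ produced by a tuple is braid-invariant; synchronization then identifies $(X \times Y)_{\textup{triv}}$ with $X_{\textup{triv}} \times Y_{\textup{triv}}$, matching the statement of the hypothesis. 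Assembling the three equalities and invoking both injectivity statements completes the argument.
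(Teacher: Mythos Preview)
Your proposal is correct and follows exactly the paper's own argument: reduce to $\Gamma_{X\times Y}$ via \thmref{EmbIntoGroupThm}, then inject into $\Gamma_X \times \Gamma_Y \times \Gamma_{X_{\textup{triv}} \times Y_{\textup{triv}}}$ via \corref{SecondInjectivityAssocGroup}, and match the three factors with the three hypotheses. Your added remarks on well-definedness of the invariants are sound elaborations but go beyond what the paper spells out.
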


\begin{proof}

By \cref{EmbIntoGroupThm}, it suffices to show that $v = w$ in $\Gamma_{X \times Y}$.
In view of \cref{SecondInjectivityAssocGroup}, it is enough to show that the images of $v$ and $w$ in each of the groups $\Gamma_X$, $\Gamma_Y$, $\Gamma_{X_{\textup{triv}} \times Y_{\textup{triv}}}$ agree. This is guaranteed by the three conditions that $v$ and $w$ satisfy.
\end{proof}

\subsection{Image of Stabilizer in $S_n$}

As in \cref{NotationTrivialization}, let $X$ be a finite self-distributive braided set, let $C_1, \dots, C_k$ be the connected components of $X$, let $n$ be a positive integer, and let $s = (x_1, \dots, x_n) \in X^n$.  
For $1 \leq j \leq k$ put $D_j = \{1 \leq i \leq n : x_i \in C_j\}$ and $n_j = |D_j|$. We will make the identification 
\[
\{\sigma \in S_n : \sigma(D_j) = D_j \text{ for every } 1 \leq j \leq k \} = S_{n_1} \times \dots \times S_{n_k}.
\]
Recall that $B_{n_1, \dots, n_k}$ is the inverse image of $S_{n_1} \times \dots \times S_{n_k}$ under the homomorphism from $B_n$ to $S_n$.

\begin{cor} \label{RestrictionOnImage}

The image in $S_n$ of the stabilizer in $B_n$ of $s$ is contained in $S_{n_1} \times \dots \times S_{n_k}$. 
In other words, the stabilizer of $s$ in $B_n$ is the stabilizer of $s$ in $B_{n_1, \dots, n_k}$.

\end{cor}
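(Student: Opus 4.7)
The plan is to push everything through the trivialization morphism and exploit the fact that on a trivial braided set the $B_n$-action factors through the symmetric group. Let $\tau \colon X \to X_{\textup{triv}}$ be the trivialization, and recall that by construction the fibers of $\tau$ are exactly the connected components $C_1, \dots, C_k$, so $D_j = \{1 \leq i \leq n : \tau(x_i) = \tau|_{C_j}\}$. Since the formation of $X^n$ with its right $B_n$-action is functorial in braided sets, applying $\tau$ coordinate-wise gives a $B_n$-equivariant map $\tau^n \colon X^n \to X_{\textup{triv}}^n$.

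Next I would invoke the remark made earlier in the section that whenever the braided set is trivial, the action of $B_n$ on its $n$-th power factors through the permutation action of $S_n$. Thus if $\pi \colon B_n \to S_n$ denotes the usual surjection, the action of any $\beta \in B_n$ on $X_{\textup{triv}}^n$ agrees with the action of $\pi(\beta)$ as a permutation of coordinates.

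Now let $\beta \in B_n$ be an element stabilizing $s$. Then
\[
\tau^n(s) = \tau^n(s^{\beta}) = \tau^n(s)^{\beta} = \tau^n(s)^{\pi(\beta)},
\]
where the last equality uses the factorization above. Writing out coordinates, $\tau^n(s)$ is the tuple whose $i$-th entry is $\tau|_{C_j}$ precisely when $i \in D_j$; the identity $\tau^n(s)^{\pi(\beta)} = \tau^n(s)$ then forces $\pi(\beta)$ to preserve each fiber $D_j$ of this tuple. This is exactly the statement that $\pi(\beta) \in S_{n_1} \times \dots \times S_{n_k}$, which yields the inclusion of the image of $\operatorname{Stab}_{B_n}(s)$ in $S_{n_1} \times \dots \times S_{n_k}$, and the second sentence is then the definition of $B_{n_1, \dots, n_k}$ as the preimage of this product.

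There is no real obstacle here; the argument is essentially a functoriality verification. The only content is the observation that the $B_n$-action on $X_{\textup{triv}}^n$ is a permutation action, which is immediate from the definition of trivial braided set and was already noted in the text.
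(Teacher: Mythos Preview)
Your argument is correct and is essentially the paper's own proof unpacked: the paper cites \cref{BlockForBraidGroup}, whose proof is precisely the observation that the trivialization map $X^n \to X_{\textup{triv}}^n$ is $B_n$-equivariant with the action on the target factoring through $S_n$, so the stabilizer of $s$ must fix the fiber containing $s$. You have simply written this out directly rather than passing through the block terminology.
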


\begin{proof}

This is an immediate consequence of \cref{BlockForBraidGroup}.
\end{proof}

%

\begin{thm} \label{OptConj}

Suppose that $n_1, \dots, n_k$ are sufficiently large, and that $\langle x_1, \dots, x_n \rangle = X$.
Then the image in $S_{n_1} \times \dots \times S_{n_k}$ of the stabilizer in $B_n$ (equivalently, in $B_{n_1, \dots, n_k}$) of $s$ contains $A_{n_1} \times \dots \times A_{n_k}$.

\end{thm}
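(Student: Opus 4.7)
The plan is to verify the hypotheses of \cref{HomogeneityProducts} for the image $H$ of $\operatorname{Stab}_{B_n}(s)$ in $S_{n_1} \times \dots \times S_{n_k}$, which by \cref{RestrictionOnImage} is where this image lives. Taking the $n_j$ large enough will guarantee the bookkeeping bounds $n_j \geq 7$ and $\binom{n_j}{\lfloor n_j/2 \rfloor} > 2^k$ that \cref{HomogeneityProducts} demands with $r=k$. What remains is the transitivity-on-halves condition: for every choice of $X_j, Y_j \subseteq D_j$ with $|X_j| = |Y_j| = \lfloor n_j/2 \rfloor$, I must produce $h \in H$ with $h(X_j) = Y_j$ for all $j$.

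Given such $X_j, Y_j$, encode them as indicator vectors $\epsilon, \eta \in \mathcal T_2^n = \{0,1\}^n$, and form the tuples $t_\epsilon = ((x_1,\epsilon_1), \dots, (x_n,\epsilon_n))$ and $t_\eta = ((x_1,\eta_1), \dots, (x_n,\eta_n))$ in $(X \times \mathcal T_2)^n$. If $t_\epsilon$ and $t_\eta$ lie in the same $B_n$-orbit, then any $\beta$ with $t_\epsilon^\beta = t_\eta$ automatically stabilizes $s$ (looking at $X$-projections), and since $\mathcal T_2$ is trivial the image $h = \pi(\beta) \in S_n$ acts on the $\mathcal T_2$-coordinates by a plain permutation. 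That permutation must send the $1$-positions of $\epsilon$ to the $1$-positions of $\eta$; combined with $h \in S_{n_1} \times \dots \times S_{n_k}$, this yields $h(X_j) = Y_j$ for every $j$, as required.

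To prove the orbit equality, I invoke \cref{ActionOnPairsBraidGroup} with $Y = \mathcal T_2$. The pair $(X, \mathcal T_2)$ is productive by \cref{TrivialProductivity} and synchronized since $\mathcal T_2$ is squarefree, so only the three numerical conditions need checking: the $S_X$-projections of $t_\epsilon, t_\eta$ are both $s$; the $S_{\mathcal T_2}$-projections both contain $\sum_j \lfloor n_j/2 \rfloor$ ones and therefore coincide in $S_{\mathcal T_2}$ (where the $B_n$-action factors through $S_n$); and for each $(C_j, \delta) \in X_{\textup{triv}} \times \mathcal T_2$, both $t_\epsilon$ and $t_\eta$ contain the same number ($\lfloor n_j/2 \rfloor$ or $\lceil n_j/2 \rceil$) of entries projecting to $(C_j, \delta)$.

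The step I expect to require the most care is the hypothesis that $t_\epsilon, t_\eta \in S^*_{X \times \mathcal T_2}(N)$. The multiplicity-$\geq N$ per component requirement is handled by taking $n_j \geq 2N$. The generation condition hinges on the identity $(y,\delta)^{(z,\zeta)} = (y^z, \delta)$ (valid because $\mathcal T_2$ is trivial), which shows that the second coordinate is invariant under rack operations. Consequently the subrack $W$ generated by $\{(x_i, \epsilon_i)\}$ decomposes as $W = W_0 \times \{0\} \sqcup W_1 \times \{1\}$, where each $W_\delta$ is the closure of $\{x_i : \epsilon_i = \delta\}$ under $y \mapsto y^z$ for $z \in \pi_1(W)$. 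Since $\langle x_1, \dots, x_n \rangle = X$ we get $\pi_1(W) = X$, so the $x_i$ generate $\operatorname{Inn}(X)$, which acts transitively on each $C_j$. Because both $\delta = 0$ and $\delta = 1$ appear among the $\epsilon_i$ with $x_i \in C_j$ (using $\lfloor n_j/2 \rfloor \geq 1$), the set $\{x_i : \epsilon_i = \delta\}$ meets every $C_j$, whence $W_\delta = X$ and $W = X \times \mathcal T_2$. Feeding this into \cref{ActionOnPairsBraidGroup} yields the orbit equality and completes the verification.
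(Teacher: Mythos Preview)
Your proof is correct and follows essentially the same route as the paper: reduce to the transitivity-on-halves hypothesis of \cref{HomogeneityProducts}, encode the target subsets via the trivial rack $\mathcal T_2$, and conclude by applying \cref{ActionOnPairsBraidGroup} to the productive synchronized pair $(X,\mathcal T_2)$. The only notable difference is in verifying that the entries of $t_\epsilon$ generate $X \times \mathcal T_2$: the paper argues abstractly (the entries generate $\operatorname{Inn}(X \times \mathcal T_2)$ and surject onto $(X \times \mathcal T_2)_{\textup{triv}}$, hence generate the rack), whereas you give a direct decomposition $W = W_0 \times \{0\} \sqcup W_1 \times \{1\}$ and use transitivity of $\operatorname{Inn}(X)$ on each $C_j$; both arguments are short and valid.
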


\begin{proof}

We invoke \cref{HomogeneityProducts} so given $X_j,Y_j \subseteq D_j$ with $|X_j| = |Y_j| = \lfloor n_j/2 \rfloor$, we need to show that there exists $g \in B_n$ such that $s^g = s$ and $X_j^g = Y_j$ for all $1 \leq j \leq k$.
For $1 \leq i \leq n$ put
\[
u_i = 
\begin{cases}
1 &i \in X_j \text{ for some } 1 \leq j \leq k \\
0 &\text{otherwise}
\end{cases} \quad
v_i = 
\begin{cases}
1 &i \in Y_j \text{ for some } 1 \leq j \leq k \\
0 &\text{otherwise}.
\end{cases}
\]
Viewing $((x_1, u_1), \dots, (x_n, u_n))$ and $((x_1, v_1), \dots, (x_n, v_n))$ as elements of $(X \times \mathcal T_2)^n$,
our task is to show that $((x_1, u_1), \dots, (x_n, u_n))^g = ((x_1, v_1), \dots, (x_n, v_n))$ for some $g \in B_n$.

Since the entries of $s$ generate $X$, they also generate $\operatorname{Inn}(X)$. As $\mathcal T_2$ is a trivial braided set, it follows that the entries of $((x_1, u_1), \dots, (x_n, u_n))$ generate $\operatorname{Inn}(X \times \mathcal T_2)$, so these entries also generate $X \times \mathcal T_2$ because they map surjectively onto $(X \times \mathcal T_2)_{\text{triv}} = X_{\text{triv}} \times \mathcal T_2$. Similarly, the entries of $((x_1, v_1), \dots, (x_n, v_n))$ generate $X \times \mathcal T_2$.
Therefore, we need to show that $((x_1, u_1), \dots, (x_n, u_n)) = ((x_1, v_1), \dots, (x_n, v_n))$ as elements in $S^*_{X \times \mathcal T_2}(\lfloor \min\{n_1, \dots, n_k\}/2 \rfloor)$.

By \cref{TrivialProductivity}, the pair $(X, \mathcal T_2)$ is productive, so we can resort to \cref{ActionOnPairsBraidGroup} once we check the three conditions therein.
The first condition is satisfied because both $((x_1, u_1), \dots, (x_n, u_n))$ and $((x_1, v_1), \dots, (x_n, v_n))$ project to $s$ in $S_X$.
The second condition is satisfied because by assumption
\[
|X_1| + \dots + |X_k| = |Y_1| + \dots + |Y_k|
\]
so there exists $g \in B_n$ such that $(u_1, \dots, u_n)^g = (v_1, \dots, v_n)$ as elements in $\mathcal T_2^n$.
To see that the third condition is satisfied, we fix $1 \leq j \leq k$ and $\lambda \in \mathcal T_2$. We have
\[
|\{1 \leq i \leq n : x_i \in C_j, \ u_i = \lambda\}| = 
\begin{cases}
|X_j| &\lambda = 1\\
n_j - |X_j| &\lambda = 0
\end{cases}
\]
and similarly
\[
|\{1 \leq i \leq n : x_i \in C_j, \ v_i = \lambda\}| = 
\begin{cases}
|Y_j| &\lambda = 1\\
n_j - |Y_j| &\lambda = 0.
\end{cases}
\]
Since $|X_j| = |Y_j|$ by assumption, the third condition in \cref{ActionOnPairsBraidGroup} is indeed satisfied.
\end{proof}

\begin{remark} \label{MoreGeneralThanRacks}

It is likely possible to extend \cref{OptConj} to nondegenerate (but not necessarily self-distributive) finite braided sets. 
To do this, one associates to a nondegenerate finite braided set $Y$ its derived self-distributive braided set $X$, as in \cite{Sol}.
The key point is that this association gives an isomorphism $Y^n \to X^n$ of $B_n$-sets.
We do not know whether it is possible to extend \cref{OptConj} to (some family of) degenerate braided sets.
Perhaps a first step would be to obtain a version of the results in \cref{CPFW} for more general braided sets.

\end{remark}

\begin{lem} \label{ProduceTranspstnImage}

Suppose that $X$ is squarefree, and let $1 \leq \alpha < \beta \leq n$ with $x_\alpha = x_\beta$.
Then the image in $S_n$ of the stabilizer of $s$ in $B_n$ contains the transposition $(\alpha \ \beta)$.

\end{lem}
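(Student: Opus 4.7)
My plan is to exhibit an explicit element $g \in B_n$ stabilizing $s$ whose image in $S_n$ is the transposition $(\alpha\ \beta)$. The idea is a standard conjugation trick: use a ``sliding'' braid $b$ to transport the entry $x_\beta$ from position $\beta$ to position $\alpha+1$, so that positions $\alpha$ and $\alpha+1$ of $s^b$ both equal $x_\alpha = x_\beta$; then apply $\sigma_\alpha$, which fixes $s^b$ because $X$ is squarefree ($R(x,x) = (x,x)$, i.e.\ $x^x = x$); then undo the slide by conjugation. Concretely, set
\[
b = \sigma_{\beta-1} \sigma_{\beta-2} \cdots \sigma_{\alpha+1} \in B_n
\]
(with the convention $b = 1$ when $\beta = \alpha+1$) and put $g = b \sigma_\alpha b^{-1}$.

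First I would compute $s^b$ iteratively, using the self-distributivity relation $\pi_1 R(x,y) = y$ available in the (quandle) context in which this lemma is applied. After applying $\sigma_{\beta-1}$, positions $\beta-1, \beta$ become $(x_\beta, x_{\beta-1}^{x_\beta})$. Applying $\sigma_{\beta-2}$ next transforms the pair at positions $\beta-2, \beta-1$, which is $(x_{\beta-2}, x_\beta)$, into $(x_\beta, x_{\beta-2}^{x_\beta})$, so the entry $x_\beta$ ``slides'' one step to the left. Iterating, one finds
\[
s^b = (x_1, \ldots, x_\alpha,\ x_\beta,\ x_{\alpha+1}^{x_\beta}, x_{\alpha+2}^{x_\beta}, \ldots, x_{\beta-1}^{x_\beta},\ x_{\beta+1}, \ldots, x_n),
\]
so position $\alpha$ is still $x_\alpha$ (untouched by $b$), while position $\alpha+1$ is $x_\beta$.

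Next, since $x_\alpha = x_\beta$ by hypothesis, the entries at positions $\alpha$ and $\alpha+1$ of $s^b$ coincide. The squarefree hypothesis gives $R(x_\alpha, x_\alpha) = (x_\alpha, x_\alpha)$, so $\sigma_\alpha$ fixes $s^b$. Therefore
\[
s^g = s^{b \sigma_\alpha b^{-1}} = (s^b)^{\sigma_\alpha b^{-1}} = (s^b)^{b^{-1}} = s,
\]
so $g$ lies in the stabilizer of $s$ in $B_n$. Finally I would identify the image of $g$ in $S_n$. The image of $b$ is the permutation $\pi$ with $\pi(\alpha+1) = \beta$ and $\pi(i) = i-1$ for $\alpha+2 \leq i \leq \beta$, fixing all other indices; conjugating $(\alpha\ \alpha+1)$ by $\pi$ then yields $(\alpha\ \beta)$, as required.

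The only point requiring real care is the iterative computation of $s^b$, where one must track each generator's effect and invoke self-distributivity at each step to ensure $x_\beta$ travels cleanly to position $\alpha+1$ (rather than getting mangled by $\tau_1, \tau_2$); the squarefree axiom is then what allows $\sigma_\alpha$ to fix the tuple at the critical pair of positions. Beyond this bookkeeping, the argument is a routine conjugation trick.
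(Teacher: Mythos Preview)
Your proposal is correct and is essentially identical to the paper's proof: the paper defines the same element $R_{\alpha,\beta} = \sigma_{\beta-1}\cdots\sigma_{\alpha+1}\sigma_\alpha\sigma_{\alpha+1}^{-1}\cdots\sigma_{\beta-1}^{-1}$ (your $g = b\sigma_\alpha b^{-1}$) and asserts without further detail that it stabilizes $s$ and maps to $(\alpha\ \beta)$ in $S_n$. You have simply spelled out the sliding computation and the use of squarefreeness that the paper leaves to the reader; the stray reference to ``$\tau_1,\tau_2$'' near the end appears to be a leftover from your drafting and should be cleaned up.
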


\begin{proof}

Since $X$ is squarefree and self-distributive, we see that the element 
\[
R_{\alpha, \beta} = \sigma_{\beta-1} \sigma_{\beta-2} \cdots \sigma_{\alpha+1} \sigma_{\alpha} \sigma_{\alpha+1}^{-1} \cdots \sigma_{\beta-2}^{-1} \sigma_{\beta-1}^{-1} \in B_n
\]
lies in the stabilizer of $(x_1, \dots, x_n)$, and that its image in $S_{n}$ is the transposition $(\alpha \ \beta)$.
\end{proof}

\begin{prop} \label{SquarfreeTranspositionProp}

Suppose that $X$ is squarefree, set $N = \max_{1 \leq j \leq k} |C_j|$, and assume that $n_j > N$ for every $1 \leq j \leq k$.
Then the image in $S_{n_1} \times \dots \times S_{n_k}$ of the stabilizer in $B_{n_1, \dots, n_k}$ of $s$ surjects onto $(\mathbb Z / 2 \mathbb Z)^k$ under the sign homomorphisms.

\end{prop}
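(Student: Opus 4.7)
The plan is to apply Lemma \ref{ProduceTranspstnImage} once for each connected component $C_j$ to produce $k$ transpositions whose sign vectors form a basis of $(\mathbb{Z}/2\mathbb{Z})^k$.

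More concretely, fix $1 \leq j \leq k$. Since $n_j = |D_j| > N \geq |C_j|$, the pigeonhole principle applied to the map $D_j \to C_j$ sending $i \mapsto x_i$ produces indices $\alpha_j < \beta_j$, both lying in $D_j$, with $x_{\alpha_j} = x_{\beta_j}$. By \lemref{ProduceTranspstnImage}, the braid
\[
R_{\alpha_j, \beta_j} = \sigma_{\beta_j - 1} \cdots \sigma_{\alpha_j + 1} \sigma_{\alpha_j} \sigma_{\alpha_j+1}^{-1} \cdots \sigma_{\beta_j-1}^{-1} \in B_n
\]
stabilizes $s$, and its image in $S_n$ is the transposition $(\alpha_j \ \beta_j)$. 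Because $\alpha_j, \beta_j \in D_j$, this transposition preserves each $D_i$; equivalently, its image lies in $S_{n_1} \times \dots \times S_{n_k}$, with the $j$-th coordinate an odd permutation and every other coordinate trivial. By \corref{RestrictionOnImage}, $R_{\alpha_j, \beta_j}$ automatically lies in $B_{n_1, \dots, n_k}$, so it contributes to the image of the stabilizer in $B_{n_1, \dots, n_k}$.

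Running this construction over $j = 1, \dots, k$ yields $k$ elements in the image whose images under the componentwise sign homomorphism $S_{n_1} \times \dots \times S_{n_k} \to (\mathbb{Z}/2\mathbb{Z})^k$ are the standard basis vectors $e_1, \dots, e_k$. These generate $(\mathbb{Z}/2\mathbb{Z})^k$, proving the desired surjectivity.

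There is essentially no obstacle: the entire content is the pigeonhole estimate ensuring a repeated entry within each connected component's block (which is exactly why the hypothesis $n_j > |C_j|$ is imposed), combined with the squarefree hypothesis that makes \lemref{ProduceTranspstnImage} available. The only minor point to remember is to invoke \corref{RestrictionOnImage} so that the produced stabilizer elements are recognized as living in the colored braid group $B_{n_1, \dots, n_k}$ rather than just in $B_n$.
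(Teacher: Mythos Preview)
Your proof is correct and follows essentially the same approach as the paper: for each $j$, use the pigeonhole bound $n_j > |C_j|$ to find a repeated entry within $D_j$, then invoke \lemref{ProduceTranspstnImage} to produce a transposition supported on $D_j$, yielding the $j$-th standard basis vector under the sign map. Your version is slightly more explicit (writing out $R_{\alpha_j,\beta_j}$ and invoking \corref{RestrictionOnImage}), but the argument is the same.
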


\begin{proof}


Fix $1 \leq j \leq k$. Our task is to find an element in the stabilizer of $(x_1, \dots, x_n)$ whose image in $S_{n_j}$ is an odd permutation, for instance a transposition,
and whose image in $S_{n_r}$ for every $1 \leq r \leq k$ with $r \neq j$ is trivial.
It follows from our definition of $N$ and the assumption on $n_j$ that there exist two indices $\alpha < \beta$ in $D_j$ for which $x_\alpha = x_\beta$.
The required element is supplied to us by \cref{ProduceTranspstnImage}.
\end{proof}


The argument in the proof above also gives the following.

\begin{cor} \label{NotContainedAlternatingGroup}

Suppose that $X$ is squarefree, and that $n_j > |C_j|$ for some $1 \leq j \leq k$.
Then the image in $S_n$ of the stabilizer in $B_n$ (equivalently, in $B_{n_1, \dots, n_k}$) of $s$ is not contained in $A_n$. 

\end{cor}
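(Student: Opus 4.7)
The plan is to recycle essentially the same mechanism that drives \propref{SquarfreeTranspositionProp}, but with a weaker conclusion: producing a single odd permutation in the image of the stabilizer is enough to exclude containment in $A_n$, so I do not need to separately control the sign in each factor $S_{n_r}$. First I would unpack the definitions: $n_j = |D_j|$ counts how many entries of $s = (x_1, \dots, x_n)$ lie in $C_j$, and by hypothesis $n_j > |C_j|$ for some index $j$. By the pigeonhole principle applied to the map $D_j \to C_j$, $i \mapsto x_i$, there exist two distinct indices $\alpha, \beta \in D_j$ (say $\alpha < \beta$) with $x_\alpha = x_\beta$.

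Next I would invoke \lemref{ProduceTranspstnImage} with these indices, which is applicable because $X$ is squarefree by assumption. The lemma supplies an element $R_{\alpha,\beta} \in B_n$ lying in the stabilizer of $s$ whose image in $S_n$ is the transposition $(\alpha\ \beta)$. Since a transposition is an odd permutation, it does not lie in $A_n$, and therefore the image in $S_n$ of the stabilizer of $s$ in $B_n$ is not contained in $A_n$. The parenthetical equivalence between stabilizing $s$ in $B_n$ and in $B_{n_1,\dots,n_k}$ follows at once from \corref{RestrictionOnImage}, so no further work is required.

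I do not foresee any genuine obstacle: the entire argument is a two-line consequence of \lemref{ProduceTranspstnImage} combined with pigeonhole, and its role in the paper is to complement \thmref{OptConj} by ruling out the ``alternating'' alternative in the dichotomy of \propref{HomogeneousGroupsAreBig}, thereby upgrading the conclusion from ``$H \supseteq A_n$'' to ``$H = S_n$'' in the quandle case of \thmref{ConnectedLargeMonodromy}.
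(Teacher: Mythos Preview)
Your proposal is correct and matches the paper's own argument essentially line for line: pigeonhole on $D_j \to C_j$ yields coinciding entries, and \lemref{ProduceTranspstnImage} then places a transposition in the image of the stabilizer, which is odd. The paper merely says ``the argument in the proof above also gives the following,'' referring to \propref{SquarfreeTranspositionProp}, which is exactly what you have written out.
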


%
%
%
%

\begin{cor} \label{SqfreeCorSn}

Suppose that $n_1, \dots, n_k$ are sufficiently large, that $\langle x_1, \dots, x_n \rangle = X$, and that $X$ is squarefree.
Then the image in $S_{n}$ of the stabilizer in $B_n$ (equivalently, in $B_{n_1, \dots, n_k}$) of $s$ is $S_{n_1} \times \dots \times S_{n_k}$.

\end{cor}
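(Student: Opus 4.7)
The plan is to combine the three preceding results of the subsection to squeeze the image $H$ of the stabilizer in $S_n$ from both sides.

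First, by \cref{RestrictionOnImage}, the image $H$ is automatically contained in $S_{n_1} \times \dots \times S_{n_k}$, and the stabilizer in $B_n$ equals the stabilizer in $B_{n_1, \dots, n_k}$. So the task reduces to showing the reverse containment $S_{n_1} \times \dots \times S_{n_k} \subseteq H$.

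Next, since $n_1, \dots, n_k$ are sufficiently large (and in particular each $n_j$ exceeds $|C_j|$, as the threshold in \cref{OptConj} is already a function of $|X| \geq |C_j|$), I would apply \cref{OptConj} to conclude that $H$ contains the subgroup $A_{n_1} \times \dots \times A_{n_k}$. This is the kernel of the product sign homomorphism
\[
\mathrm{sgn} \colon S_{n_1} \times \dots \times S_{n_k} \to (\mathbb Z / 2\mathbb Z)^k.
\]
Then I would apply \cref{SquarfreeTranspositionProp} (whose squarefree hypothesis is in force, and whose numerical hypothesis $n_j > |C_j|$ is again guaranteed by our ``sufficiently large'' assumption) to conclude that the restriction of $\mathrm{sgn}$ to $H$ is surjective onto $(\mathbb Z / 2 \mathbb Z)^k$.

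The conclusion is then immediate from the correspondence theorem: a subgroup $H$ of $S_{n_1} \times \dots \times S_{n_k}$ that both contains the kernel $A_{n_1} \times \dots \times A_{n_k}$ of $\mathrm{sgn}$ and surjects onto the quotient $(\mathbb Z / 2 \mathbb Z)^k$ under $\mathrm{sgn}$ must be the full group. Hence $H = S_{n_1} \times \dots \times S_{n_k}$, as required. There is no real obstacle here, since all the work is already packaged into \cref{OptConj} and \cref{SquarfreeTranspositionProp}; the only thing to verify is that the ``sufficiently large'' threshold from \cref{OptConj} dominates the threshold $n_j > |C_j|$ needed for the second ingredient, which is automatic.
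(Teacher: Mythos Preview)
Your proposal is correct and follows essentially the same route as the paper's proof: invoke \cref{OptConj} to get $A_{n_1} \times \dots \times A_{n_k} \subseteq H$, invoke \cref{SquarfreeTranspositionProp} to get surjectivity of $H$ onto $(\mathbb Z/2\mathbb Z)^k$, and conclude. The paper states it more tersely and omits the explicit citation of \cref{RestrictionOnImage} (taking the containment $H \subseteq S_{n_1} \times \dots \times S_{n_k}$ as already established), but the argument is the same.
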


\begin{proof}

Denote this image by $H$. 
By \cref{OptConj}, $H$ contains $A_{n_1} \times \dots \times A_{n_k}$.
It follows from \cref{SquarfreeTranspositionProp} that the restriction of the quotient map $S_{n_1} \times \dots \times S_{n_k} \to S_{n_1} \times \dots \times S_{n_k}/A_{n_1} \times \dots \times A_{n_k}$ to $H$ is surjective. We conclude that $H = S_{n_1} \times \dots \times S_{n_k}$ as required.
\end{proof}

From \cref{OptConj} and \cref{SqfreeCorSn} we get \cref{ConnectedLargeMonodromy}.

\begin{ex} \label{TwoElementExample}

By considering the case of the nontrivial self-distributive braided set $X = \mathcal N = \{\eta, \xi\}$ from \cref{Nexample}, we show that the squarefreeness assumption in \cref{SqfreeCorSn} is necessary in order to obtain a result stronger than in \cref{OptConj}.

First note that in this case $X$ is connected.
Now take any $s \in X^n$, and observe that its entries necessarily generate $X$. 
It is readily checked that for each of the generators $\sigma_i$ for $1 \leq i \leq n-1$,
the parity of the number of appearances of $\eta$ in $s$ differs from the parity of the number of appearances of $\eta$ in $s^{\sigma_i}$.
As a result, for every $g \in B_n$, the number of times $\eta$ appears in $s$ is congruent mod $2$ to the number of times $\eta$ appears in $s^g$ if and only if the image of $g$ in $S_n$ lies in $A_n$. 
We conclude that the image in $S_n$ of the stabilizer of $s$ in $B_n$ is contained in $A_n$.

\end{ex}

%
%
%
%

We sketch an additional proof of \cref{SqfreeCorSn}.

\begin{prop} \label{CoincidencesProp}

There exists a nonnegative integer $r$ for which the following holds. 
If $n_1, \dots, n_k$ are large enough, and $\langle x_1, \dots, x_n \rangle = X$, then the 
$B_{n_1, \dots, n_k}$-orbit of $s$ contains both
\begin{itemize}

\item an $n$-tuple whose entries indexed by the first $n_j-r$ indices in $D_j$ coincide for every $1 \leq j \leq k$;

\item and an $n$-tuple $(y_1, \dots, y_n)$ with $|\{1 \leq i \leq n : y_i = x\}| > r$ for every $x \in X$.

\end{itemize}

\end{prop}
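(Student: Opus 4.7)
The plan is to exploit the bulk-conversion mechanism underlying the proof of \propref{MultSurjProp}. Recall that if the entries of a tuple $s'$ generate $\operatorname{Inn}(X)$, and $c = x^g$ for some $g$ in the subgroup of $\operatorname{Inn}(X)$ generated by those entries, then $x^{m_x} s' = c^{m_x} s'$ in $S_X$. By \thmref{EmbIntoGroupThm}, for $n_1,\dots,n_k$ sufficiently large this equality in $S_X$ certifies that the associated tuples lie in the same $B_n$-orbit, which by \corref{ActionOfColoredBraidGroupCor} coincides with the $B_{n_1,\dots,n_k}$-orbit inside $C_1^{n_1} \times \dots \times C_k^{n_k}$. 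Braid moves and bulk conversions preserve the subrack generated by the tuple (any entry $x$ replaced by $c = x^g$ is recoverable as $c^{g^{-1}}$), so the tuple always generates $X$. I take $r := |X|\max_{x \in X} m_x$.

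For the first bullet, handle each color separately. Fix $1 \leq j \leq k$ and let $t_j \in C_j^{n_j}$ denote the subtuple in positions $D_j$. By pigeonhole, some element $x_j \in C_j$ appears at least $n_j/|C_j|$ times in $t_j$, which exceeds $m_{x_j}$ when $n_j$ is large. For each $c \in C_j \setminus \{x_j\}$, use braid moves within $D_j$ to cluster copies of $c$ contiguously, and apply the bulk conversion $c^{m_c} \mapsto x_j^{m_c}$ to each complete subblock of $m_c$ copies, using that $x_j = c^g$ for some $g$ writable as a word in the other entries (which generate $\operatorname{Inn}(X)$ by the invariance of the generated subrack together with $\langle s \rangle = X$). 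This leaves at most $m_c - 1$ copies of $c$, so after processing every $c$ the total number of non-$x_j$ entries in $D_j$ is at most $\sum_{c \in C_j \setminus \{x_j\}}(m_c - 1) \leq r$. Finally, apply the basic $S_X$-relation $c \cdot x_j = x_j \cdot c^{x_j}$ repeatedly to bubble every $x_j$ to the leading positions of $D_j$; the trailing non-$x_j$ entries may mutate into other elements of $C_j$ under these conjugations, but they remain in $C_j$ and their count is unchanged. Doing this simultaneously for each $j$ yields the required representative.

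For the second bullet, start from the tuple just constructed, which for each $j$ contains at least $n_j - r$ copies of a single element $x_j \in C_j$. For every $x \in X$ with $x \in C_j$, apply the bulk conversion in reverse: transform $\lceil (r+1)/m_x \rceil$ batches of $m_x$ copies of $x_j$ into copies of $x$ (using $m_x = m_{x_j}$ since conjugate elements in $\operatorname{Inn}(X)$ have equal order). This creates more than $r$ copies of $x$ at a cost of at most $r + \max_y m_y$ copies of $x_j$. Summing over $x \in C_j$, total consumption per color is at most $|C_j|(r + \max_y m_y)$, well within the stockpile $n_j - r$ once $n_j$ is large enough.

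The main obstacle is verifying that at every stage the entries outside the block being converted continue to generate $\operatorname{Inn}(X)$, so that the conjugating element $g$ is genuinely realizable as a word in them. This rests on the invariance of the generated subrack under all operations combined with the assumption that each $n_j$ is large enough to ensure that removing any single fixed block leaves a tuple still meeting every connected component of $X$.
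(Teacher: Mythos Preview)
There is a genuine gap in your treatment of the first bullet. When you convert the last complete block of $m_c$ copies of some $c \in C_j \setminus \{x_j\}$ (that is, when the current count of $c$ is a multiple of $m_c$), the complementary subtuple $s'$ may fail to generate $\operatorname{Inn}(X)$, and then the identity $c^{m_c}s' = x_j^{m_c}s'$ in $S_X$ need not hold. For a concrete failure take $k=1$, $X$ the quandle of transpositions in $S_3$, and $s = ((12)^{n-2},(13),(13))$: here $m_{(13)}=2$ and your procedure converts both copies of $(13)$, but $s'=((12)^{n-2})$ generates only $\langle(12)\rangle\subsetneq S_3=\operatorname{Inn}(X)$; indeed $(12)^n$ is a $B_n$-fixed point (since $X$ is a quandle) whose entries do not generate $X$, so it lies in a different orbit from $s$. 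You correctly flag this as the main obstacle in your final paragraph, but the resolution you offer---that $s'$ still meets every connected component---is not strong enough to force generation of $\operatorname{Inn}(X)$, as this example shows.

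The repair is easy: always leave at least one copy of $c$ unconverted (convert only $\lfloor (\ell{-}1)/m_c\rfloor$ batches when $c$ occurs $\ell$ times), so that $s'$ retains every element present in the current tuple and hence generates $X$. This is exactly the care taken in the proof of \propref{MultSurjProp}, where one insists that $x$ occur more than $m_x$ times so that $s'$ still contains $x$ and therefore lies in $X^*(\cdots)$. The paper's one-line proof applies \propref{MultSurjProp} directly as a black box (writing $s=wv$ in $S_X$ for a suitable prefix $w$) rather than re-running its mechanism, and thereby inherits that care automatically; \corref{ActionOfColoredBraidGroupCor} then passes from $B_n$-orbits to $B_{n_1,\dots,n_k}$-orbits. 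Your second bullet is unaffected by this issue, and your appeal to \thmref{EmbIntoGroupThm} is unnecessary (equality in $S_X$ is by definition equality of $B_n$-orbits) though harmless.
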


\begin{proof}

This follows from \cref{MultSurjProp} and \cref{ActionOfColoredBraidGroupCor}.
\end{proof}

\begin{proof}[Proof of \cref{SqfreeCorSn}]

It follows from the first item in \cref{CoincidencesProp} in conjunction with \cref{ProduceTranspstnImage} that there exists a nonnegative integer $r$ such that the image in $S_{n_1} \times \dots \times S_{n_k}$ of the stabilizer in $B_n$ of $s$ contains the subgroup $S_{n_1 - r} \times \dots \times S_{n_k - r}$.
From the second item in \cref{CoincidencesProp} and \cref{ProduceTranspstnImage} we conclude that this image also contains a conjugate of an element whose projections to $S_{n_1}, \dots, S_{n_r}$ have all of their cycles of lengths exceeding $r$. We conclude by invoking \cref{LargeProductsForSquareFree}.
\end{proof}

\subsubsection{Results for Small $n$}

\begin{prop} \label{SmSmalln}

Let $m \geq 2$ be an integer, and let $X$ be the conjugacy class of all transpositions in the group $S_m$.
Then for a positive integer $n$ and every $s \in X^n$ whose entries generate $X$, the restriction to the stabilizer of $s$ in $B_n$ of the homomorphism to $S_n$ is a surjection. 

\end{prop}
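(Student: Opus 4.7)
The plan is to reduce via braid moves to a tuple whose consecutive entries are either equal or share a common point in $\{1, \ldots, m\}$, and then directly exhibit adjacent transpositions in the image of the stabilizer.

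First, I would establish the following graph-theoretic reduction: the $B_n$-orbit of $s = (x_1, \ldots, x_n)$ contains a tuple $s' = (y_1, \ldots, y_n)$ such that for every $1 \leq i \leq n-1$, either $y_i = y_{i+1}$ or $y_i y_{i+1}$ has order $3$ (equivalently, $y_i$ and $y_{i+1}$ share exactly one common point of $\{1,\ldots,m\}$). To prove this, I view the tuple as an ordered multiset of edges of the complete graph on $\{1, \ldots, m\}$; the generation hypothesis on $s$ is equivalent to this multigraph being connected. The key braid moves are: if $y_i$ and $y_{i+1}$ are disjoint transpositions then $\sigma_i$ merely swaps them (since they commute), whereas if $y_i = (v\ a)$ and $y_{i+1} = (v\ b)$ share a vertex $v$, then $\sigma_i$ replaces the pair by $((v\ b), (a\ b))$, a ``pivot'' that modifies the underlying multigraph. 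In addition, for $j>i+1$ the braid $\sigma_{j-1}\sigma_{j-2}\cdots\sigma_{i+1}$ transports $y_j$ unchanged into position $i+1$, at the cost of conjugating the entries at positions $i+2, \ldots, j$ by $y_j$. Iterating these operations via a greedy procedure---at each step $i$, fixing positions $1, \ldots, i$ and using the connectivity of the evolving multigraph to transport into position $i+1$ some edge sharing a vertex with the edge at position $i$---produces the desired $s'$.

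Given such an $s'$, for each $1 \leq i < n$ I produce an element of $\operatorname{Stab}_{B_n}(s')$ whose image in $S_n$ is the adjacent transposition $(i \ i+1)$. If $y_i = y_{i+1}$, then $\sigma_i \in \operatorname{Stab}(s')$ (this is the case $\alpha = i$, $\beta = i+1$ of \cref{ProduceTranspstnImage}). If instead $y_i = (v\ a)$ and $y_{i+1} = (v\ b)$ share the vertex $v$, a direct calculation shows that applying $\sigma_i$ three times to $((v\ a), (v\ b))$ cycles through $((v\ b), (a\ b))$ and $((a\ b), (v\ a))$ before returning to $((v\ a), (v\ b))$; hence $\sigma_i^3 \in \operatorname{Stab}(s')$, and its image in $S_n$ is again $(i \ i+1)$. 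Since the adjacent transpositions generate $S_n$, the image of $\operatorname{Stab}(s')$ in $S_n$ is all of $S_n$. As $s$ and $s'$ lie in the same $B_n$-orbit, their stabilizers are conjugate in $B_n$, so their images in $S_n$ are conjugate subgroups of $S_n$ and hence both equal to $S_n$.

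The main obstacle is the graph-theoretic reduction step. A naive greedy algorithm can fail in stranded configurations: even though the full multigraph is connected, it can happen that after fixing positions $1, \ldots, i$, the edge at position $i$ has both of its endpoints incident only with edges placed at earlier positions, so that no edge at positions $i+1, \ldots, n$ shares a vertex with it. In such cases one must perform auxiliary maneuvers---for example, swap the offending disjoint entries leftward through the already-fixed region, then apply a pivot move to create a coincidence $y_{i'} = y_{i'+1}$ earlier in the tuple, and only afterward reorganize to the right. Making this uniform, and in particular handling the minimal-generation case $n=m-1$ where the multigraph is forced to be a spanning tree and entry repetitions are impossible, is the technical heart of the proof; it is closely related to the classical Clebsch--Hurwitz connectivity theorem for Hurwitz spaces of simple branched covers of $\mathbb{P}^1$ with monodromy group $S_m$.
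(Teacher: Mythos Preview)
Your second step---that $\sigma_i^3$ stabilizes a pair of transpositions sharing a letter and maps to the transposition $(i\ i{+}1)$---is exactly the key observation, but the reduction in your first step is both the part you leave unfinished and, more importantly, unnecessary. The paper bypasses it entirely. Instead of rearranging $s$ into a ``chained'' tuple $s'$, it works directly with $s$ and uses, for \emph{any} indices $\alpha<\beta$ (adjacent or not), the conjugated braid
\[
R_{\alpha,\beta}^{\,3}=\sigma_{\beta-1}\cdots\sigma_{\alpha+1}\,\sigma_\alpha^{3}\,\sigma_{\alpha+1}^{-1}\cdots\sigma_{\beta-1}^{-1}.
\]
A direct computation shows that $R_{\alpha,\beta}$ acts on the pair at positions $(\alpha,\beta)$ exactly as $\sigma$ would, while right-conjugating each intermediate entry by $x_\alpha^{-1}x_\beta=x_\alpha x_\beta$. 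When $x_\alpha$ and $x_\beta$ share a letter one has $(x_\alpha x_\beta)^3=1$, so after three iterations the accumulated conjugation on the intermediate entries is trivial and the pair $(x_\alpha,x_\beta)$ returns to itself; hence $R_{\alpha,\beta}^{\,3}\in\operatorname{Stab}(s)$, with image $(\alpha\ \beta)\in S_n$.

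Now form the graph $\Lambda$ on $\{1,\dots,n\}$ with an edge $\{\alpha,\beta\}$ whenever $x_\alpha$ and $x_\beta$ share a letter. The generation hypothesis (equivalently, connectedness of your multigraph on $\{1,\dots,m\}$) forces $\Lambda$ to be connected, so the transpositions $\{(\alpha\ \beta):\alpha\sim_\Lambda\beta\}$ in the image of $\operatorname{Stab}(s)$ generate a transitive subgroup of $S_n$; a transitive subgroup of $S_n$ generated by transpositions is $S_n$ itself. Thus the Clebsch--Hurwitz-style normalization you identify as ``the technical heart'' is simply not needed: once you notice that your $\sigma_i^3$ trick extends via conjugation to non-adjacent positions, the whole proof is a paragraph.
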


\begin{remark}

If there exists an $n$-tuple $s \in X^n$ whose entries generate $X$, then $n \geq m-1$.

\end{remark}

\begin{proof}

Let $s = ((i_1 \ j_1), \dots, (i_n \  j_n))$ be an $n$-tuple of transpositions in $S_m$ that generates $X$.
Consider the simple graph $\Lambda$ whose set of vertices is $\{1, \dots, n\}$ with $1 \leq \alpha < \beta \leq n$ adjacent in case 
\[
\{i_\alpha, j_\alpha \} \cap \{i_\beta, j_\beta \} \neq \emptyset.
\]
We claim that $\Lambda$ is connected.

To prove the claim, suppose toward a contradiction that there exist nonempty disjoint subsets $I,J$ of $\{1, \dots, n\}$ with $I \cup J = \{1, \dots, n\}$ such that there is no edge between any index in $I$ and any index in $J$. From our definition of $\Lambda$ it follows that the nonempty subsets
\[
\mathcal I = \bigcup_{\alpha \in I} \{i_\alpha, j_\alpha\}, \quad \mathcal J = \bigcup_{\beta \in J} \{i_\beta, j_\beta \}
\]
of $\{1, \dots, m\}$ are disjoint.
We conclude that
\[
X = \langle (i_1 \ j_1), \dots, (i_n \  j_n) \rangle \leq \{(i \ j) : i\neq j, \ i,j \in \mathcal I \text{ or } i,j \in \mathcal J \} \lneq X.
\]
This contradiction concludes the proof of our claim that $\Lambda$ is connected.

Denote by $H$ the image in $S_n$ of the stabilizer in $B_n$ of $s$. 
We note that for every $1\leq \alpha < \beta \leq n$ that are adjacent in $\Lambda$, the stabilizer of $s$ in $B_n$ contains the element
\[
R_{\alpha, \beta}^3 = \sigma_{\beta-1} \sigma_{\beta-2} \cdots \sigma_{\alpha+1} \sigma_{\alpha}^3 \sigma_{\alpha+1}^{-1} \cdots \sigma_{\beta-2}^{-1} \sigma_{\beta-1}^{-1}.
\]
This element maps to the transposition $(\alpha \ \beta)$ in $S_n$, so $(\alpha \ \beta) \in H$. We consider the subgroup
\[
H_0 = \left \langle \{ (\alpha \ \beta) : 1\leq \alpha < \beta \leq n \text{ are adjacent in } \Lambda \} \right \rangle
\]
of $H$ generated by all such transpositions. Since $\Lambda$ is connected, it follows that $H_0$ is a transitive subgroup of $S_n$.
The only transitive subgroup of $S_n$ that is generated by transpositions is $S_n$ itself, so $H_0 = S_n$ and thus $H = S_n$ as required.
\end{proof}

Let $C \subseteq G$ be a generating set of a group $G$ which is a disjoint union of conjugacy classes of $G$.
We have a left action of $G$ on the sets 
$
\operatorname{Mor}(F_n,G), \ \operatorname{Mor}^C(F_n,G), \ \operatorname{Sur}^C(F_n,G), \ \operatorname{Sur}^C_1(F_n,G)$
by postcomposition with conjugation. This action commutes with the right action of $B_n$, so we get a right action of $B_n$ on the sets of orbits 
$
G \backslash \operatorname{Mor}(F_n,G), \ G \backslash \operatorname{Mor}^C(F_n,G), \ G \backslash \operatorname{Sur}^C(F_n,G), \ G \backslash \operatorname{Sur}^C_1(F_n,G). 
$
With our identification of $\operatorname{Mor}(F_n, G)$ with $G^n$, this left action of $G$ on  is given by
\[
{}^g(g_1, \dots, g_n) = (gg_1g^{-1}, \dots, g g_n g^{-1}), \quad (g_1, \dots, g_n) \in G^n, \ g \in G.
\]

\begin{prop} \label{CyclicShiftProp}

For a positive integer $n$, and every $(g_1,  g_2, \dots, g_{n-1}, g_n) \in G \backslash \operatorname{Mor}(F_n, G)$ we have
\[
(g_1, g_2, \dots, g_{n-1}, g_n)^{\sigma_{n-1} \sigma_{n-2} \cdots \sigma_2 \sigma_1} = (g_n, g_1, g_2, \dots, g_{n-1})
\]
as classes in $G \backslash \operatorname{Mor}(F_n,G)$.
\end{prop}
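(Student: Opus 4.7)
The plan is a direct computation that proceeds in two stages: first carry out the braid action explicitly, then absorb a leftover overall conjugation into the $G$-orbit.

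For the first stage, I would prove by induction on $1 \leq j \leq n-1$ the formula
\[
(g_1, g_2, \dots, g_n)^{\sigma_{n-1} \sigma_{n-2} \cdots \sigma_{n-j}} = (g_1, \dots, g_{n-j-1}, g_n, g_{n-j}^{g_n}, g_{n-j+1}^{g_n}, \dots, g_{n-1}^{g_n}).
\]
The base case $j=1$ is simply the defining formula \cref{BraidsInAction} at index $n-1$, using the right-action convention $\theta^{\phi_1 \phi_2} = (\theta^{\phi_1})^{\phi_2}$ (so the leftmost braid generator acts first). For the inductive step, I would observe that $\sigma_{n-j-1}$ acts only on the entries in positions $n-j-1$ and $n-j$, which by the induction hypothesis hold $g_{n-j-1}$ and $g_n$ respectively; \cref{BraidsInAction} replaces this pair by $(g_n, g_{n-j-1}^{g_n})$, producing exactly the claimed tuple for $j+1$.

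Taking $j = n-1$ yields
\[
(g_1, g_2, \dots, g_n)^{\sigma_{n-1} \sigma_{n-2} \cdots \sigma_1} = (g_n, g_1^{g_n}, g_2^{g_n}, \dots, g_{n-1}^{g_n}).
\]
For the second stage, I would note that the left action of $G$ on $\operatorname{Mor}(F_n, G) \cong G^n$ is componentwise conjugation, so
\[
(g_n, g_1^{g_n}, g_2^{g_n}, \dots, g_{n-1}^{g_n}) = {}^{g_n^{-1}}(g_n, g_1, g_2, \dots, g_{n-1}),
\]
since $g_n^{-1} g_n g_n = g_n$ and $g_n^{-1} g_i g_n = g_i^{g_n}$. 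Hence the two tuples represent the same class in $G \backslash \operatorname{Mor}(F_n, G)$, which is what we wanted.

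There is no real obstacle: the argument is bookkeeping with the braid formula, and the only conceptual step is recognizing at the end that the stray overall conjugation by $g_n^{-1}$ is precisely what the quotient by the diagonal $G$-action is designed to kill. The induction is natural because each $\sigma_{n-j-1}$ operates on the interface between the ``untouched'' initial segment $(g_1, \dots, g_{n-j-1})$ and the already-rotated tail, which keeps the formula clean at every stage.
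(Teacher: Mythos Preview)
Your proof is correct and follows essentially the same approach as the paper: compute the braid action on representatives to obtain $(g_n, g_1^{g_n}, \dots, g_{n-1}^{g_n})$, then absorb the global conjugation by $g_n$ into the $G$-orbit. The only difference is that you spell out the induction explicitly, whereas the paper simply asserts the outcome of the braid computation.
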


\begin{proof}

Viewing our representatives in $G \backslash \operatorname{Mor}(F_n,G)$ as elements in $\operatorname{Mor}(F_n,G)$, we see that the action of $\sigma_{n-1} \sigma_{n-2} \cdots \sigma_2 \sigma_1 \in B_n$ is given by
$
(g_1, g_2, \dots, g_{n-1}, g_n)^{\sigma_{n-1} \sigma_{n-2} \cdots \sigma_2 \sigma_1} = (g_n, g_1^{g_n}, g_2^{g_n}, \dots, g_{n-1}^{g_n}).  
$
The required equality of classes in $G \backslash \operatorname{Mor}(F_n,G)$ is then seen by conjugating the right hand side by $g_n$.
\end{proof}

\begin{defi}

Let $G$ be a group and let $C \subset G$ be a conjugacy class. We say that $C$ is abundant if for some (equivalently, every) $x \in C$ there exists $y \in G$ such that the set $\{x^{y^r} : r \in \mathbb Z\}$ (of conjugates of $x$ by elements of the cyclic subgroup of $G$ generated by $y$) generates $G$.

\end{defi}

\begin{cor} \label{AbundanceThm}

Let $G$ be a finite simple group. Then there exists an abundant conjugacy class $C \subset G$.

\end{cor}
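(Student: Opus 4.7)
The plan is to reduce the statement to the well-known fact that every nontrivial finite simple group is $2$-generated. I would fix a pair $(x,y) \in G \times G$ with $\langle x, y \rangle = G$ and $x \neq 1$, which exists in every nontrivial finite simple group (for $G$ cyclic of prime order take any nontrivial $x$ and $y = 1$; for the nonabelian simple groups this is a classification-dependent theorem going back to Steinberg, Aschbacher--Guralnick, and others). Let $C$ be the conjugacy class of $x$; the claim is that $C$ is abundant, with this $y$ serving as the witness required in the definition.

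The verification is a short normal-closure argument. Set $S = \{x^{y^r} : r \in \mathbb{Z}\}$ and $H = \langle S \rangle$. Conjugation by $y$ sends each generator $x^{y^r}$ of $H$ to $x^{y^{r+1}} \in S \subseteq H$, so $y$ normalizes $H$. The element $x = x^{y^0}$ lies in $H$, and hence also normalizes $H$. Consequently $\langle x, y \rangle = G$ normalizes $H$, so $H \trianglelefteq G$. Since $x \in H$ and $x \neq 1$, the subgroup $H$ is a nontrivial normal subgroup of the simple group $G$, and therefore $H = G$. This shows that $\{x^{y^r} : r \in \mathbb{Z}\}$ generates $G$, so $C$ is abundant.

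The nontrivial ingredient is the $2$-generation of nontrivial finite simple groups, which is really where the work is buried; this is the main obstacle in the sense that it is the deep fact being invoked. Everything else is formal: in particular the equivalence in the definition of abundance (replacing $x$ by a conjugate $x' = z^{-1} x z$ is compensated by replacing $y$ by $z^{-1} y z$, since $(x')^{(z^{-1}yz)^r} = z^{-1} x^{y^r} z$) is a one-line check, and the normal-closure step above requires no machinery beyond simplicity of $G$.
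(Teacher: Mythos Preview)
Your proof is correct. The paper's own proof is a one-line citation of \cite[Corollary~4(ii)]{BGH}, which concerns the spread of finite simple groups and, like your argument, ultimately rests on the classification. Your route is slightly different and in a sense more economical: you invoke only the $2$-generation of finite simple groups (a weaker and older consequence of the classification than the spread results of \cite{BGH}) and then supply the short normal-closure argument explicitly. The reference to \cite{BGH} in fact gives the stronger conclusion that \emph{every} nontrivial conjugacy class of a finite simple group is abundant, since positive spread means that for every nontrivial $x$ there is a $y$ with $\langle x,y\rangle=G$, after which your same normal-closure step applies. The corollary as stated only asks for existence, so your approach suffices and uses less machinery; the paper's citation buys a stronger statement than is needed here.
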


\begin{proof}

This is a special case of \cite[Corollary 4 (ii)]{BGH}.
\end{proof}

\begin{prop} \label{AbundanceImpliesSn}

Let $G$ be a finite group, and let $C \subset G$ be an abundant conjugacy class.
Then for every positive integer $n$ that is divisible by $|G|^2$, there exists $s \in G \backslash \operatorname{Sur}^C_1(F_n,G)$ such that the image of the stabilizer of $s$ under the homomorphism from $B_n$ to $S_n$ contains an $n$-cycle.

\end{prop}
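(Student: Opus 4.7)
The plan is to reduce, via \cref{CyclicShiftProp}, to exhibiting a single class $s \in G \backslash \operatorname{Sur}^C_1(F_n, G)$ that is fixed under the backward cyclic shift $(g_1, \dots, g_n) \mapsto (g_n, g_1, \dots, g_{n-1})$. The braid $\beta := \sigma_{n-1} \sigma_{n-2} \cdots \sigma_1 \in B_n$ maps in $S_n$ to the product of transpositions $(n-1 \ n)(n-2 \ n-1) \cdots (1 \ 2)$, which is an $n$-cycle; and \cref{CyclicShiftProp} identifies the action of $\beta$ on $G \backslash \operatorname{Mor}(F_n, G)$ with precisely that backward cyclic shift. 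So once I produce a cyclic-shift-invariant $s$, the braid $\beta$ will sit in its $B_n$-stabilizer and project to the required $n$-cycle.

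To construct $s$, I would use the abundance of $C$ to pick $x \in C$ and $y \in G$ with $\{x^{y^r} : r \in \mathbb Z\}$ generating $G$, let $d = \operatorname{ord}(y)$ (so $d \mid |G|$ and hence $d \mid n$ by the hypothesis $|G|^2 \mid n$), and define $g_i := x^{y^{i-1}}$ for $1 \le i \le n$. The cyclic-shift invariance of the class of $s := (g_1, \dots, g_n)$ then drops out of a direct computation: conjugation by $y$ sends $g_i = y^{-(i-1)} x y^{i-1}$ to $x^{y^{i-2}}$, and the $i=1$ case uses $y^n = 1$ to wrap around from $x^{y^{-1}}$ to $x^{y^{n-1}} = g_n$. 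Thus ${}^y s = (g_n, g_1, \dots, g_{n-1})$, which is exactly the backward cyclic shift of $s$, so $[s]$ is fixed by $\beta$ in $G \backslash \operatorname{Mor}(F_n, G)$.

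What remains is to verify that $s$ actually lies in $\operatorname{Sur}^C_1(F_n, G)$. Membership in $C^n$ is immediate from the definition of the $g_i$, and the entries generate $G$ because, since $d \mid n$, they exhaust the orbit $\{x^{y^r} : r \in \mathbb Z\}$ which is a generating set of $G$ by our choice of $(x,y)$. The main obstacle I expect -- and the reason the hypothesis imposes a divisibility condition on $n$ -- is the product condition $g_1 g_2 \cdots g_n = 1$. The hard part is handled by a telescoping calculation: collapsing adjacent $y$-factors rewrites $\prod_{i=0}^{n-1} y^{-i} x y^i$ in the form $(xy^{-1})^{n-1} \cdot x y^{n-1}$, and then $y^{n-1} = y^{-1}$ simplifies this to $(xy^{-1})^n$, which vanishes because $|xy^{-1}|$ divides $|G|$ and $|G| \mid n$ by hypothesis. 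This completes the construction and establishes the claim.
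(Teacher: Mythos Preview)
Your proof is correct and follows essentially the same route as the paper: build $s$ out of the sequence $x, x^{y}, x^{y^{2}}, \dots$, check that $\beta = \sigma_{n-1}\cdots\sigma_1$ fixes its class in $G\backslash\operatorname{Mor}(F_n,G)$ via \cref{CyclicShiftProp} together with conjugation by $y$, and verify that $s \in \operatorname{Sur}^C_1(F_n,G)$. The only real difference is packaging: the paper writes $s$ as $n/|G|$ copies of the block $s_0 = (x, x^y, \dots, x^{y^{|G|-1}})$ and argues the product is $g^{n/|G|}$ with $g = \prod_{r=0}^{|G|-1} x^{y^r}$, while you telescope directly to the closed form $(xy^{-1})^n$. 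Your computation is a bit sharper and in fact shows that the hypothesis $|G| \mid n$ already suffices, whereas the paper's block argument uses $|G|^2 \mid n$.
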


\begin{proof}

Since $G$ is a finite group, and $C$ is an abundant conjugacy class, there exist $x \in C$ and $y \in G$ such that
$\langle x^{y^r} : 0 \leq r \leq |G| - 1 \rangle = G.$ We set
\[
g = x \cdot x^y \cdot x^{y^2} \cdots x^{y^{|G|-1}} \in G, \quad s_0 = (x, x^y, x^{y^2}, x^{y^{|G|-1}}) \in G^{|G|}
\]
and denote by $s$ the $\frac{n}{|G|}$-fold concatenation of $s_0$ with itself. Since $n$ is divisible by $|G|^2$, we see that $\frac{n}{|G|}$ is a multiple of $|G|$, so multiplying the entries of $s$ (in order) gives $1 \in G$ because $g^{|G|} = 1$. 
We conclude that $s$ represents an element of $\operatorname{Sur}^C_1(F_n,G)$.

It follows from \cref{CyclicShiftProp} that the class of $s$ in $G \backslash \operatorname{Sur}^C_1(F_n,G)$ is mapped under $\sigma_{n-1} \cdots \sigma_1$ to a class represented by a one-step cyclic right shift of $s$. 
Since conjugating this representative by $y^{-1} \in G$ gives $s$, we conclude that $\sigma_{n-1} \cdots \sigma_1$ lies in the stabilizer of the class of $s$ in $G \backslash \operatorname{Sur}^C_1(F_n,G)$. 
As $\sigma_{n-1} \cdots \sigma_1$ maps to an $n$-cylce in $S_n$, the image in $S_n$ of the stabilizer of $s$ contains an $n$-cycle.
\end{proof}

\begin{cor} \label{FiniteSimpleGroup}

Let $G$ be a finite simple group.
Then for every positive integer $n$ that is divisible by $|G|^2$ there exists $s \in G \backslash \operatorname{Sur}_1(F_n,G)$ such that the image of the stabilizer of $s$ under the homomorphism from $B_n$ to $S_n$ contains an $n$-cycle.

\end{cor}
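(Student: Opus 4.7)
The plan is to deduce this essentially immediately from the two preceding results. First I would invoke \cref{AbundanceThm} to obtain an abundant conjugacy class $C \subset G$, using the fact that $G$ is finite simple. (The reference \cite[Corollary 4 (ii)]{BGH} is the real input here, but this is already packaged as \cref{AbundanceThm} and I treat it as a black box.)

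With such a $C$ in hand, I would feed it into \cref{AbundanceImpliesSn}, applied to the finite group $G$ itself (rather than requiring simplicity at this step). This supplies, for every positive integer $n$ divisible by $|G|^2$, an element $s \in G \backslash \operatorname{Sur}^C_1(F_n,G)$ whose stabilizer in $B_n$ maps to a subgroup of $S_n$ containing an $n$-cycle. Finally I would observe that, by definition, $\operatorname{Sur}^C_1(F_n,G) \subseteq \operatorname{Sur}_1(F_n,G)$, and hence the $G$-orbit of $s$ can be viewed as a class in $G \backslash \operatorname{Sur}_1(F_n,G)$; the inclusion is $B_n$-equivariant, so the stabilizer (and therefore its image in $S_n$) does not shrink when we forget the restriction to $C$. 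This yields the desired $s \in G \backslash \operatorname{Sur}_1(F_n,G)$.

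There is no real obstacle in the step itself; the work has already been done in \cref{AbundanceThm} (which rests on the classification-dependent theorem from \cite{BGH} that simple groups admit an abundant class) and in \cref{AbundanceImpliesSn} (where the cyclic shift argument via \cref{CyclicShiftProp} produces an explicit $n$-cycle in the stabilizer). The only thing to be careful about is that the explicit tuple $s$ constructed in the proof of \cref{AbundanceImpliesSn} does indeed have entries in $C$ (so we can apply it once we have an abundant class), and that the forgetful map $G \backslash \operatorname{Sur}^C_1(F_n,G) \to G \backslash \operatorname{Sur}_1(F_n,G)$ respects the $B_n$-action. Both are immediate from the definitions, so the corollary drops out.
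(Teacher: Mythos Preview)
Your proposal is correct and matches the paper's proof, which simply states that the corollary is an immediate consequence of \cref{AbundanceThm} and \cref{AbundanceImpliesSn}. The extra care you take in noting that the inclusion $G \backslash \operatorname{Sur}^C_1(F_n,G) \hookrightarrow G \backslash \operatorname{Sur}_1(F_n,G)$ is $B_n$-equivariant (so that the stabilizer is unchanged) is a sound clarification, but the paper does not spell this out.
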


\begin{proof}

This is an immediate consequence of \cref{AbundanceThm} and \cref{AbundanceImpliesSn}.
\end{proof}

\section{Proof of \cref{MobiusVonMangoldtConjectureLargeFF}} \label{THEproof}

For brevity of notation, we set $k = d_\lhd(G)$.

\subsection{The M\"obius Function}

Our task here is to prove the first part of \cref{MobiusVonMangoldtConjectureLargeFF}, namely that
\[
\sum_{K \in \mathcal E_q^C(G;n_1, \dots, n_{k})} (-1)^{|\operatorname{ram}(K)|} =  o\left(\left|\mathcal E_q^C(G;n_1, \dots, n_{k})\right|\right),\quad q \to \infty, \quad \gcd(q, |G|) = 1,
\]
assuming $n_j > |C_j|$ for some $1 \leq j \leq k$. 

%

 
We recall from \cite[Section 11.4, Theorem 11.1, Lemma 11.2, Proposition 11.4]{LWZB} that there exists a smooth separated scheme $\mathsf{Hur}_{G,C}^{n_1, \dots, n_k}$ of finite type over $\mathbb Z[|G|^{-1}]$ with
 \[
 \mathsf{Hur}_{G,C}^{n_1, \dots, n_k}(\F_q) = \mathcal E_q^C(G;n_1, \dots, n_k).
 \]
 The scheme $\mathsf{Hur}_{G,C}^{n_1, \dots, n_k}$ is (pure) of relative dimension $n$ over $\mathbb Z[|G|^{-1}]$.
 

Let $\textup{Conf}^{n_1, \dots, n_k}$ be the $k$-colored configuration space of $n_j$ unordered points of color $j$, for every $1 \leq j \leq k$, on the affine line such that all points are distinct (whether they have the same color or not).
We can view a point of this space over a field as a $k$-tuple of pairwise coprime monic squarefree polynomials of degrees $n_1, \dots, n_k$ over that field.
The space $\textup{Conf}^{n_1, \dots, n_k}$ is a smooth separated scheme of finite type over $\mathbb Z$.
\cite[Proposition 11.4]{LWZB} provides us with a finite \'etale map 
\[
\pi \colon \mathsf{Hur}_{G,C}^{n_1, \dots, n_k} \to \textup{Conf}^{n_1, \dots, n_k}
\] 
such that $\pi(K) = (D_K(1), \dots, D_K(k))$ on the level of $\F_q$-points.

Let $\textup{PConf}^n$ be the locus in $\mathbb A^n_{\mathbb Z}$ where all coordinates are pairwise distinct.
We can view a point of this space over a field as an (ordered) $n$-tuple of distinct scalars from that field.
We consider the finite \'etale map $\rho \colon \textup{PConf}^n \to \textup{Conf}^{n_1, \dots, n_k}$ which given an $n$-tuple $(\lambda_1, \dots, \lambda_n)$ of distinct scalars from a field, assigns the color $j$ to the scalars $\lambda_{n_1 + \dots + n_{j-1} + 1}, \dots, \lambda_{n_1 + \dots + n_j}$ for every $1 \leq j \leq k$. We can also write
\[
\rho(\lambda_1, \dots, \lambda_n) =  \left( \prod_{r=1}^{n_j} (T-\lambda_{n_1 + \dots + n_{j-1} + r}) \right)_{j = 1, \dots, k}
\]
where the right hand side is a $k$-tuple of pairwise coprime monic squarefree polynomials.
The map $\rho$ is a Galois cover with Galois group $S_{n_1} \times \dots \times S_{n_k}$ that acts by permuting the roots of each polynomial. 

Take an auxiliary prime number $\ell > n$ with $\ell$ not dividing $q$. We fix an isomorphism of fields $\iota \colon \overline{\mathbb Q_\ell} \to \mathbb C$, and will at times (silently) identify these fields via $\iota$.

Using $\rho$ we can view every finite-dimensional representation $W$ of $S_{n_1} \times \dots \times S_{n_k}$ over $\overline{\mathbb Q_\ell}$ as a lisse (\'etale) $\overline{\mathbb Q_\ell}$-sheaf on $\textup{Conf}^{n_1, \dots, n_k}$ punctually pure of weight $0$. We write $\chi_W$ for the character of $W$, and for $(f_1, \dots, f_k) \in \textup{Conf}^{n_1, \dots, n_k}(\F_q)$ we denote by $(\sigma_{f_1}, \dots, \sigma_{f_k})$ the conjugacy class in $S_{n_1} \times \dots \times S_{n_k}$ corresponding to the permutation induced by $\operatorname{Frob}_q$ on (the roots of) $f_j$ for every $1 \leq j \leq k$. The cycle structure of $\sigma_{f_j}$ is the multiset of degrees of the monic irreducible factors of $f_j$. We therefore have
\[
\operatorname{tr}\left(\operatorname{Frob}_q, W_{(f_1, \dots, f_k)}\right) = \chi_W(\sigma_{f_1}, \dots, \sigma_{f_k})
\]
an expression for the trace of Frobenius on the stalk of (the sheaf corresponding to) $W$ at a geometric point of $\textup{Conf}^{n_1, \dots, n_k}$ lying over $(f_1, \dots, f_k)$.

Let $\pi^* W$ be the lisse sheaf (punctually pure of weight $0$) on $\mathsf{Hur}_{G,C}^{n_1, \dots, n_k}$ obtained by pulling back $W$. 
For every $K \in \mathcal E_q^C(G;n_1, \dots, n_k)$ we have
\[
\operatorname{tr}\left(\operatorname{Frob}_q, (\pi^* W)_{K}\right) = \operatorname{tr}\left(\operatorname{Frob}_q, W_{\pi(K)}\right) = \operatorname{tr}\left(\operatorname{Frob}_q, W_{(D_K(1), \dots, D_K(k))}\right) = \chi_W\left(\sigma_{D_K(1)}, \dots, \sigma_{D_K(k)}\right).
\]

In the special case $W = \textup{sgn}_1 \boxtimes \dots \boxtimes \textup{sgn}_k$ the sign representation of $S_{n_1} \times \dots \times S_{n_k}$,
for $K$ in $\mathcal E_q^C(G;n_1, \dots, n_k)$ we have
\[
\chi_{\textup{sgn}_1 \boxtimes \dots \boxtimes \textup{sgn}_k}\left(\sigma_{D_K(1)}, \dots, \sigma_{D_K(k)}\right) = (-1)^n \cdot (-1)^{|\operatorname{ram}(K)|}
\]
so
\[
\sum_{K \in \mathcal E_q^C(G;n_1, \dots, n_k)} (-1)^{|\operatorname{ram}(K)|}  = 
(-1)^n \sum_{K \in  \mathsf{Hur}_{G,C}^{n_1, \dots, n_k}(\F_q)} \operatorname{tr}\left(\operatorname{Frob}_q, (\pi^* \textup{sgn}_1 \boxtimes \dots \boxtimes \textup{sgn}_k)_{K}\right).
\]


We can assume that $\mathsf{Hur}_{G,C}^{n_1, \dots, n_k}$ has an $\F_q$-point since otherwise the sum above is over the empty set, so the statement to be proven holds trivially. It follows that $\mathsf{Hur}_{G,C}^{n_1, \dots, n_k}$ has an $\F_q$-rational component, so from the Lang--Weil bound applied to that component we get that
\[
\liminf_{\substack{q \to \infty \\ \gcd(q, |G|)  = 1}} \frac{|\mathcal E_q^C(G;n_1, \dots, n_k)|}{q^n} > 0.
\]
Our task is therefore to show that
\[
\lim_{\substack{q \to \infty \\ \gcd(q, |G|)  = 1}} 
\frac{1}{q^n} \sum_{K \in \mathsf{Hur}_{G,C}^{n_1, \dots, n_k}(\F_q) } \operatorname{tr}(\operatorname{Frob}_q, (\pi^* W)_{K}) = 0, \quad W = \textup{sgn}_1 \boxtimes \dots \boxtimes \textup{sgn}_k.
\]
From now until almost the end of the proof, we will task ourselves with computing (under suitable assumptions on $n_1, \dots, n_k$) the limit above for an arbitrary finite-dimensional representation $W$ of $S_{n_1} \times \dots \times S_{n_k}$ over $\overline{\mathbb Q_\ell}$. 
It is only at the very end that we will specialize again to $W = \textup{sgn}_1 \boxtimes \dots \boxtimes \textup{sgn}_k$ and deduce that the limit is indeed $0$ in case $n_j > |C_j|$ for some $1 \leq j \leq k$.


By the Grothendieck--Lefschetz trace formula, we have
\begin{equation*}
\begin{split}
&\lim_{\substack{q \to \infty \\ \gcd(q, |G|)  = 1}} 
\frac{1}{q^n} \sum_{K \in \mathsf{Hur}_{G,C}^{n_1, \dots, n_k}(\F_q) } \operatorname{tr}(\operatorname{Frob}_q, (\pi^* W)_{K}) = \\
&\lim_{\substack{q \to \infty \\ \gcd(q, |G|)  = 1}} 
\frac{1}{q^n} \sum_{i=0}^{2n} (-1)^i \operatorname{tr}(\operatorname{Frob}_q, H_c^i( \mathsf{Hur}_{G,C}^{n_1, \dots, n_k} \times_{\mathbb Z[|G|^{-1}]} \overline{\F_q} , \pi^*W))
\end{split}
\end{equation*}
where we use the notation $\pi^* W$ also for the pullback of this sheaf to $\overline{\F_q}$.
We claim first that there is no contribution to the limit from $0 \leq i \leq 2n-1$.
Indeed since $\pi^* W$ is punctually pure of weight $0$, Deligne's Riemann Hypothesis gives an upper bound of $q^{i/2}$ on the absolute value of each eigenvalue of $\operatorname{Frob}_q$ on $H_c^i( \mathsf{Hur}_{G,C}^{n_1, \dots, n_k} \times_{\mathbb Z[|G|^{-1}]} \overline{\F_q} , \pi^*W)$, and the dimension over $\overline{\mathbb Q_\ell}$ of these cohomology groups is bounded independently of $q$, so dividing the trace of $\operatorname{Frob}_q$ by $q^n$ and taking $q \to \infty$ gives $0$ in the limit. 
We therefore have
\begin{equation*}
\begin{split}
&\lim_{\substack{q \to \infty \\ \gcd(q, |G|)  = 1}} 
\frac{1}{q^n} \sum_{i=0}^{2n} (-1)^i \operatorname{tr}(\operatorname{Frob}_q, H_c^i( \mathsf{Hur}_{G,C}^{n_1, \dots, n_k} \times_{\mathbb Z[|G|^{-1}]} \overline{\F_q} , \pi^*W)) =  \\
&\lim_{\substack{q \to \infty \\ \gcd(q, |G|)  = 1}} \frac{1}{q^n} \operatorname{tr}(\operatorname{Frob}_q, H_c^{2n}( \mathsf{Hur}_{G,C}^{n_1, \dots, n_k} \times_{\mathbb Z[|G|^{-1}]} \overline{\F_q} , \pi^*W)).
\end{split}
\end{equation*}

Since representations of finite groups in characteristic $0$ are semisimple, we can find a subrepresentation $U$ of $W$ for which 
\[
W = U \oplus W^{S_{n_1} \times \dots \times S_{n_k}}.
\]
We then have $U^{S_{n_1} \times \dots \times S_{n_k}} = 0$, and
\begin{equation*}
\begin{split}
&\lim_{\substack{q \to \infty \\ \gcd(q, |G|)  = 1}} \frac{1}{q^n} \operatorname{tr}(\operatorname{Frob}_q, H_c^{2n}( \mathsf{Hur}_{G,C}^{n_1, \dots, n_k} \times_{\mathbb Z[|G|^{-1}]} \overline{\F_q} , \pi^*W)) = \\
&\lim_{\substack{q \to \infty \\ \gcd(q, |G|)  = 1}} \frac{1}{q^n} \operatorname{tr}(\operatorname{Frob}_q, H_c^{2n}( \mathsf{Hur}_{G,C}^{n_1, \dots, n_k} \times_{\mathbb Z[|G|^{-1}]} \overline{\F_q} , \pi^*U)) + \\ &\dim_{\overline{\mathbb Q_\ell}} W^{S_{n_1} \times \dots \times S_{n_k}} \lim_{\substack{q \to \infty \\ \gcd(q, |G|)  = 1}} \frac{1}{q^n} \operatorname{tr}(\operatorname{Frob}_q, H_c^{2n}( \mathsf{Hur}_{G,C}^{n_1, \dots, n_k} \times_{\mathbb Z[|G|^{-1}]} \overline{\F_q} , \overline{\mathbb Q_\ell})). 
\end{split}
\end{equation*}
Since $\mathsf{Hur}_{G,C}^{n_1, \dots, n_k} \times_{\mathbb Z[|G|^{-1}]} \overline{\F_q}$ is of dimension $n$, the action of $\operatorname{Frob}_q$ on its topmost compactly supported \'etale cohomology (with constant coefficients, namely $ \overline{\mathbb Q_\ell}$) is via multiplication by $q^n$, so the above equals
\begin{equation*}
\begin{split}
&\lim_{\substack{q \to \infty \\ \gcd(q, |G|)  = 1}} \frac{1}{q^n} \operatorname{tr}(\operatorname{Frob}_q, H_c^{2n}( \mathsf{Hur}_{G,C}^{n_1, \dots, n_k} \times_{\mathbb Z[|G|^{-1}]} \overline{\F_q} , \pi^*U)) + \\ 
&\dim_{\overline{\mathbb Q_\ell}} W^{S_{n_1} \times \dots \times S_{n_k}} \cdot \dim_{\overline{\mathbb Q_\ell}} H_c^{2n}( \mathsf{Hur}_{G,C}^{n_1, \dots, n_k} \times_{\mathbb Z[|G|^{-1}]} \overline{\F_q} , \overline{\mathbb Q_\ell}). 
\end{split}
\end{equation*}


Since $\pi$ is finite, it follows from the Leray spectral sequence with compact supports that
\[
H_c^{2n}( \mathsf{Hur}_{G,C}^{n_1, \dots, n_k} \times_{\mathbb Z[|G|^{-1}]} \overline{\F_q} , \pi^*U) \cong 
H_c^{2n}( \textup{Conf}^{n_1, \dots, n_k} \times_{\mathbb Z} \overline{\F_q} , \pi_* \pi^*U) 
\]
where $\pi_*$ is the pushforward of lisse sheaves by $\pi$.
One readily checks that $\pi_* \pi^* U \cong \pi_* \overline{\mathbb Q_\ell} \otimes U$ so 
\[
H_c^{2n}( \textup{Conf}^{n_1, \dots, n_k} \times_{\mathbb Z} \overline{\F_q} , \pi_* \pi^*U)  \cong 
H_c^{2n}( \textup{Conf}^{n_1, \dots, n_k} \times_{\mathbb Z} \overline{\F_q} , \pi_* \overline{\mathbb Q_\ell} \otimes U).
\]
Since the sheaves $\pi_* \overline{\mathbb Q_\ell} $ and $U$ are self-dual, from Poincar\'e duality we get that
\[
H_c^{2n}( \textup{Conf}^{n_1, \dots, n_k} \times_{\mathbb Z} \overline{\F_q} ,  \pi_* \overline{\mathbb Q_\ell} \otimes U) \cong 
H^{0}( \textup{Conf}^{n_1, \dots, n_k} \times_{\mathbb Z} \overline{\F_q} ,  \pi_* \overline{\mathbb Q_\ell} \otimes U).
\]

It follows from the proof of \cite[Lemma 10.3]{LWZB} that with our choice of $\ell$ we have
\[
H^{0}( \textup{Conf}^{n_1, \dots, n_k} \times_{\mathbb Z} \overline{\F_q} ,  \pi_* \overline{\mathbb Q_\ell} \otimes U) = 
H^{0}( \textup{Conf}^{n_1, \dots, n_k}(\mathbb C)  ,  \pi_* \overline{\mathbb Q_\ell} \otimes U)
\]
where on the right hand side we take singular cohomology, viewing $\pi_* \overline{\mathbb Q_\ell} \otimes U$ as a local system on $\textup{Conf}^{n_1, \dots, n_k}(\mathbb C)$,
or rather as a representation of the fundamental group $B_{n_1, \dots, n_k}$ of $\textup{Conf}^{n_1, \dots, n_k}(\mathbb C)$.
Therefore we have
\[
H^{0}( \textup{Conf}^{n_1, \dots, n_k}(\mathbb C)  ,  \pi_* \overline{\mathbb Q_\ell} \otimes U) \cong (\pi_* \overline{\mathbb Q_\ell} \otimes U)^{B_{n_1, \dots, n_k}}.
\]

Denote by $\overline{\mathbb Q_\ell}\left(C_1^{n_1} \times \dots \times C_k^{n_k}\right)_1^*$ the permutation representation over $\overline{\mathbb Q_\ell}$ associated to the action of $B_{n_1, \dots, n_k}$ on $\left(C_1^{n_1} \times \dots \times C_k^{n_k}\right)_1^*$.
It follows from the proof of \cite[Theorem 12.4]{LWZB} that
\[
\pi_*\overline{\mathbb Q_\ell} \cong \overline{\mathbb Q_\ell}\left(C_1^{n_1} \times \dots \times C_k^{n_k}\right)_1^*
\]
so
\[
(\pi_* \overline{\mathbb Q_\ell} \otimes U)^{B_{n_1, \dots, n_k}} \cong \left(\overline{\mathbb Q_\ell}\left(C_1^{n_1} \times \dots \times C_k^{n_k}\right)_1^* \otimes U \right)^{B_{n_1, \dots, n_k}}.
\]
Since permutation representations are self-dual, we have 
\[
\left(\overline{\mathbb Q_\ell}\left(C_1^{n_1} \times \dots \times C_k^{n_k}\right)_1^* \otimes U \right)^{B_{n_1, \dots, n_k}} \cong 
\operatorname{Hom}_{B_{n_1, \dots, n_k}}\left(\overline{\mathbb Q_\ell}\left(C_1^{n_1} \times \dots \times C_k^{n_k}\right)_1^*, U \right).
\]

Let $S$ be a set of representatives for the orbits of the action of $B_{n_1, \dots, n_k}$ on $\left(C_1^{n_1} \times \dots \times C_k^{n_k}\right)_1^*$.
That is, for every $t \in \left(C_1^{n_1} \times \dots \times C_k^{n_k}\right)_1^*$ there exists a unique $s \in S$ that lies in the orbit of $t$ under the action of $B_{n_1, \dots, n_k}$.
For $s \in S$, denoting by $B_{n_1, \dots, n_k,s}$ the stabilizer of $s$ in $B_{n_1, \dots, n_k}$, we see that
\[
\overline{\mathbb Q_\ell}\left(C_1^{n_1} \times \dots \times C_k^{n_k}\right)_1^* \cong 
\bigoplus_{s \in S} \operatorname{Ind}_{B_{n_1, \dots, n_k, s}}^{B_{n_1, \dots, n_k}}\overline{\mathbb Q_\ell}
\]
and as a result
\[
\operatorname{Hom}_{B_{n_1, \dots, n_k}}\left(\overline{\mathbb Q_\ell}\left(C_1^{n_1} \times \dots \times C_k^{n_k}\right)_1^*, U \right) \cong
\bigoplus_{s \in S} \operatorname{Hom}_{B_{n_1, \dots, n_k}}\left( \operatorname{Ind}_{B_{n_1, \dots, n_k, s}}^{B_{n_1, \dots, n_k}}\overline{\mathbb Q_\ell}, U \right).
\]


It follows from Frobenius reciprocity that
\[
\bigoplus_{s \in S} \operatorname{Hom}_{B_{n_1, \dots, n_k}}\left(\operatorname{Ind}_{B_{n_1, \dots, n_k, s}}^{B_{n_1, \dots, n_k}}\overline{\mathbb Q_\ell}, U \right) \cong
\bigoplus_{s \in S} \operatorname{Hom}_{B_{n_1, \dots, n_k, s}}\left(\overline{\mathbb Q_\ell},U \right).
\]
The homomorphism from $B_{n_1, \dots, n_k}$ to $S_{n_1} \times \dots \times S_{n_k}$ arising from the cover $ \textup{PConf}^n(\mathbb C) \to \textup{Conf}^{n_1, \dots, n_k}(\mathbb C)$ is the one we have considered in previous sections.
In case $n_1, \dots, n_k$ are large enough, \cref{SqfreeCorSn} tells us that for every $s \in S$ the restriction to $B_{n_1, \dots, n_k, s}$ of the homomorphism from $B_{n_1, \dots, n_k}$ to $S_{n_1} \times \dots \times S_{n_k}$ is surjective, so
\[
\bigoplus_{s \in S} \operatorname{Hom}_{B_{n_1, \dots, n_k, s}}\left(\overline{\mathbb Q_\ell},U \right) \cong
\bigoplus_{s \in S} \operatorname{Hom}_{S_{n_1} \times \dots \times S_{n_k}}\left(\overline{\mathbb Q_\ell},U \right) \cong 
\bigoplus_{s \in S} U^{S_{n_1} \times \dots \times S_{n_k}} = 0.
\]
Therefore in case $n_1, \dots, n_k$ are large enough, the limit we wanted to compute is
\[
\dim_{\overline{\mathbb Q_\ell}} W^{S_{n_1} \times \dots \times S_{n_k}} \cdot \dim_{\overline{\mathbb Q_\ell}} H^{0}( \mathsf{Hur}_{G,C}^{n_1, \dots, n_k}(\mathbb C), \overline{\mathbb Q_\ell}). 
\]

Let us specialize now to the case $W =  \textup{sgn}_1 \boxtimes \dots \boxtimes \textup{sgn}_k$. In this case we have
\[
U = W = \textup{sgn}_1 \boxtimes \dots \boxtimes \textup{sgn}_k, \quad W^{S_{n_1} \times \dots \times S_{n_k}} = 0.
\]
By assumption $n_j > |C_j|$ for some $1 \leq j \leq k$ so it follows from \cref{NotContainedAlternatingGroup} that 
\[
\bigoplus_{s \in S} \operatorname{Hom}_{B_{n_1, \dots, n_k, s}}\left(\overline{\mathbb Q_\ell},U \right) \cong
\bigoplus_{s \in S} U^{B_{n_1, \dots, n_k, s}} = 0.
\]
Hence in this case the limit we wanted to compute is indeed $0$.

\subsection{The von Mangoldt Function}
Here we prove the second part of \cref{MobiusVonMangoldtConjectureLargeFF}, namely that
\[
\sum_{K \in \mathcal E_q^C(G;n_1, \dots, n_{k})} \mathbf{1}_{|\operatorname{ram}(K)| = k} \sim 
\frac{\left|\mathcal E_q^C(G;n_1, \dots, n_{k})\right|}{n_1 \cdots n_{k}}, \quad q \to \infty, \quad \gcd(q, |G|) = 1,
\]
assuming that $n_1, \dots, n_k$ are sufficiently large.
We will freely use arguments and conclusions from the previous subsection.



We denote by $\Lambda_j$ the von Mangoldt function, which for our purposes is defined on monic squarefree polynomials of degree $n_j$ over $\F_q$, taking the value $n_j$ on irreducible polynomials and the value $0$ on reducible polynomials.
For $K \in \mathcal E_q^C(G;n_1, \dots, n_{k})$ we therefore have 
\[
\mathbf{1}_{|\operatorname{ram}(K)| = k} = \frac{1}{n_1 \cdots n_k} \cdot \prod_{j=1}^k \Lambda_j(D_K(j)).
\]

We denote by $\operatorname{std}_j$ the standard representation of $S_{n_j}$ (of dimension $n_j-1$) over $\overline{\mathbb Q_\ell}$.
By \cite[Lemma 3.6]{SawIntervals}, for every monic squarefree polynomial $f$ of degree $n_j$ over $\F_q$ we have
\[
\Lambda_j(f) = \sum_{i=0}^{n_j-1} (-1)^i \chi_{\wedge^i(\operatorname{std}_j)}(\sigma_f)
\]
where $\sigma_f$ is the conjugacy class in $S_{n_j}$ of the permutation induced by the map $z \mapsto z^q$ on the (necessarily distinct) roots of $f$.
We conclude that for every $K \in \mathcal E_q^C(G;n_1, \dots, n_{k})$ we have
\[
\mathbf{1}_{|\operatorname{ram}(K)| = k} = \frac{1}{n_1 \cdots n_k} \cdot \prod_{j=1}^k \sum_{i=0}^{n_j-1} (-1)^i \chi_{\wedge^i(\operatorname{std}_j)}\left(\sigma_{D_K(j)}\right).
\]

The contribution to the right hand side above from taking the $i=0$ term for every $1 \leq j \leq k$ is $\frac{1}{n_1 \cdots n_k}$, so summing this over all $K \in \mathcal E_q^C(G;n_1, \dots, n_{k})$ gives the required main term $\frac{\left| \mathcal E_q^C(G;n_1, \dots, n_{k})\right|}{n_1 \cdots n_k}$.
Our task is therefore to show that the contribution of any other term is $o\left(\left|\mathcal E_q^C(G;n_1, \dots, n_{k})\right|\right)$.
That is, taking $0 \leq i_1 < n_1, \ \dots, \ 0 \leq i_k < n_k$ not all zero, it suffices to show that
\[
\sum_{K \in \mathcal E_q^C(G;n_1, \dots, n_{k})} \prod_{j=1}^k \chi_{\wedge^{i_j}(\operatorname{std}_j)}\left(\sigma_{D_K(j)}\right) = 
o\left(\left|\mathcal E_q^C(G;n_1, \dots, n_{k})\right|\right), \quad q \to \infty, \quad \gcd(q, |G|) = 1.
\]

We consider the representation
\[
W = \wedge^{i_1}(\operatorname{std}_1) \boxtimes \dots \boxtimes \wedge^{i_k}(\operatorname{std}_k)
\]
of $S_{n_1} \times \dots \times S_{n_k}$ over $\overline{\mathbb Q_\ell}$.
Since $\wedge^{i_j}(\operatorname{std}_j)$ is an irreducible finite-dimensional representation of $S_{n_j}$ over $\overline{\mathbb Q_\ell}$ for every $1 \leq j \leq k$, 
it follows that $W$ is an irreducible finite-dimensional representation of $S_{n_1} \times \dots \times S_{n_k}$.
Our assumption that $i_j > 0$ for some $1 \leq j \leq k$ implies that 
\[
\dim_{\overline{\mathbb Q_\ell}} W = \prod_{j=1}^{k} \dim_{\overline{\mathbb Q_\ell}} \wedge^{i_j}(\operatorname{std}_j) > 1
\] 
so $W^{S_{n_1} \times \dots \times S_{n_k}} = 0$ in view of irreducibility.
In the notation of the previous subsection we therefore have $U = W$.

The sum in which we need to obtain cancellation can be rewritten as
\[
\sum_{K \in \mathcal E_q^C(G;n_1, \dots, n_{k})} \prod_{j=1}^k \chi_{\wedge^{i_j}(\operatorname{std}_j)}\left(\sigma_{D_K(j)}\right) = 
\sum_{K \in \mathcal E_q^C(G;n_1, \dots, n_{k})} \chi_W\left(\sigma_{D_K(1)}, \dots, \sigma_{D_K(k)}\right).
\]
As in the previous subsection we have
\[
\sum_{K \in \mathcal E_q^C(G;n_1, \dots, n_{k})} \chi_W\left(\sigma_{D_K(1)}, \dots, \sigma_{D_K(k)}\right) = 
\sum_{K \in \mathcal E_q^C(G;n_1, \dots, n_{k})} \operatorname{tr}\left(\operatorname{Frob}_q, (\pi^* W)_{K}\right).
\]
By assumption $n_1, \dots, n_k$ are large enough, so from the previous subsection we see that the sum above is indeed $o\left(\left| \mathcal E_q^C(G;n_1, \dots, n_{k}) \right|\right)$.

\section*{Acknowledgments}

I am deeply indebted to Be'eri Greenfeld for his support, interest, and input. 
In particular, I am thankful to him for his guidance on racks, for helpful discussion on the second proof of \cref{SqfreeCorSn}, for informing me of \cite{BGH}, and for proving extensions of \cref{SmSmalln} to other cycle structures.

\end{document}